\newcommand{\R}{{\mathbb R}}
\newcommand{\N}{{\mathbb N}}
\newcommand{\lv}{\left\vert\kern-0.25ex\left\vert\kern-0.25ex\left\vert}
\newcommand{\rv}{\right\vert\kern-0.25ex\right\vert\kern-0.25ex\right\vert}
\renewcommand{\ge }{\geqslant}
\renewcommand{\geq }{\geqslant}
\renewcommand{\le }{\leqslant}
\renewcommand{\leq }{\leqslant}
\def\neweq#1{\begin{equation}\label{#1}}
\def\endeq{\end{equation}}
\def\eq#1{(\ref{#1})}
\newtheorem{theorem}{Theorem}[section]
\newtheorem{proposition}[theorem]{Proposition}
\newtheorem{lemma}[theorem]{Lemma}
\newtheorem{corollary}[theorem]{Corollary}
\newtheorem{remark}[theorem]{Remark}
\newtheorem{definition}[theorem]{Definition}
\theoremstyle{definition}
\begin{document}

\title[nonhomogeneous plates]{On the first frequency of reinforced \\ partially hinged plates}

\author[Elvise BERCHIO]{Elvise BERCHIO}
\address{\hbox{\parbox{5.7in}{\medskip\noindent{Dipartimento di Scienze Matematiche, \\
Politecnico di Torino,\\ Corso Duca degli Abruzzi 24, 10129 Torino, Italy. \\[3pt]
\em{E-mail address: }{\tt elvise.berchio@polito.it}}}}}
\author[Alessio FALOCCHI]{Alessio FALOCCHI}
\address{\hbox{\parbox{5.7in}{\medskip\noindent{Dipartimento di Scienze Matematiche, \\
Politecnico di Torino,\\ Corso Duca degli Abruzzi 24, 10129 Torino, Italy. \\[3pt]
\em{E-mail address: }{\tt alessio.falocchi@polito.it}}}}}
\author[Alberto FERRERO]{Alberto FERRERO}
\address{\hbox{\parbox{5.7in}{\medskip\noindent{ Dipartimento di Scienze e Innovazione Tecnologica, \\
Universit\'a del Piemonte Orientale,\\       Viale Teresa Michel 11, Alessandria, 15121, Italy. \\[3pt]
 \em{E-mail address: }{\tt alberto.ferrero@uniupo.it }}}}}
\author[Debdip GANGULY]{Debdip GANGULY}
\address{\hbox{\parbox{5.7in}{\medskip\noindent{Indian Institute of Science Education and Research,\\
Dr. Homi Bhabha Road, Pashan, Pune 411008, India. \\[3pt]       \em{E-mail addresses: }{\tt debdipmath@gmail.com}}}}}

\date{\today}

\keywords{eigenvalues; plates; torsional instability; suspension bridges}

\subjclass[2010]{35J40; 35P15; 74K20}

\begin{abstract} We consider a partially hinged rectangular plate and its normal modes. The dynamical properties of the plate are influenced by the spectrum
of the associated eigenvalue problem. In order to improve the stability of the plate, we place a certain amount of denser material in appropriate regions. If we look at the partial differential equation appearing in the model, this corresponds to insert a suitable
weight coefficient inside the equation. A possible way to locate such regions is to study the eigenvalue problem associated to the aforementioned weighted equation. In the present paper we focus our attention essentially on the first eigenvalue and on its minimization in terms of the weight. We prove the existence of minimizing weights inside special classes and we try to describe them together with the corresponding eigenfunctions.
\end{abstract}

\maketitle

\section{Introduction}\label{1}

Following \cite{fergaz} one may view a bridge as a long narrow rectangular thin plate $\Omega$ hinged at two opposite edges and free on the remaining
two edges: this plate well describes decks of footbridges and suspension bridges which, at the short edges, are supported by the ground. We refer to the monograph \cite{bookgaz} for a detailed survey of old and new mathematical models for suspension bridges. Up to scaling,
we may assume that the plate has length $\pi$ and width $2\ell$ with $2\ell\ll\pi$ so that
$$
\Omega=(0,\pi)\times(-\ell,\ell)\subset\R^2\, .
$$
There is a growing interest of engineers on the shape optimization for the design of bridges and, in particular, on the sensitivity analysis of certain
eigenvalue problems, see \cite[Chapter 6]{jhnm}. As pointed out by Banerjee \cite{banerjee}, {\em the free vibration analysis is a fundamental
pre-requisite before carrying out a flutter analysis}. Whence, in the the stability analysis of the plate a central role is played by the following eigenvalue problem:
\begin{equation}\label{weight0}
\begin{cases}
\Delta^2 u=\lambda\, u & \qquad \text{in } \Omega \\
u(0,y)=u_{xx}(0,y)=u(\pi,y)=u_{xx}(\pi,y)=0 & \qquad \text{for } y\in (-\ell,\ell) \\
u_{yy}(x,\pm\ell)+\sigma
u_{xx}(x,\pm\ell)=u_{yyy}(x,\pm\ell)+(2-\sigma)u_{xxy}(x,\pm\ell)=0
& \qquad \text{for } x\in (0,\pi)\, ,
\end{cases}
\end{equation}
where $\sigma$ denotes the Poisson ratio of the material forming the plate.
Throughout the paper we consider $\sigma\in(0,1/2)$, a range of values that includes most of the elastic materials. The boundary conditions on the short
edges tell that the plate is hinged; these conditions are attributed to Navier, since their first appearance in \cite{navier}. We refer to  \cite{bebuga2}
for the derivation of \eqref{weight0} from the total energy of the plate. Note that in \cite{fergaz} the whole spectrum of \eq{weight0} was determined,
while in \cite{bfg} the results were exploited to study the so-called torsional stability of suspension bridges for small energies.
Furthermore, in \cite{bebuga2} the variation of the eigenvalues, under domain deformations, which may not preserve the area, was investigated, see also \cite{lamberti} for related results about Dirichlet polyharmonic eigenvalue problems.

In the engineering literature the critical threshold for the wind velocity at which a form of dynamical instability, named flutter, arises, is commonly related to the distance between the square of the frequencies of certain oscillating modes. We refer to \cite{bebuga2} for a discussion of possible formulas to compute the above mentioned threshold. In particular, it follows that a possible way to increase this threshold is by increasing the distance between eigenvalues. Having this goal in mind, in order to improve the stability of the plate, we study the effect of inserting a denser material within it. This can be modelled in mathematical terms by a suitable weight function $p$; for this reason we study the \emph{weighted }eigenvalue problem:
\begin{equation}\label{weight}
\begin{cases}
\Delta^2 u=\lambda\, p(x,y) u & \qquad \text{in } \Omega \\
u(0,y)=u_{xx}(0,y)=u(\pi,y)=u_{xx}(\pi,y)=0 & \qquad \text{for } y\in (-\ell,\ell)\\
u_{yy}(x,\pm\ell)+\sigma
u_{xx}(x,\pm\ell)=u_{yyy}(x,\pm\ell)+(2-\sigma)u_{xxy}(x,\pm\ell)=0
& \qquad \text{for } x\in (0,\pi)\, ,
\end{cases}
\end{equation}
where, for $\alpha,\beta \in(0,+\infty)$ with $\alpha<\beta$ fixed, $p$ belongs to the following family of weights:

\begin{equation} \label{eq:famiglia}
P_{\alpha, \beta}:=\left\{p\in L^\infty(\Omega): \alpha\leq p\leq\beta \ \text{a.e. in } \Omega \
\text{ and }\int_{\Omega}p\,dxdy=|\Omega| \, \right\} \, .
\end{equation}

The spectral analysis of \eq{weight} should indicate where to place the denser material within the plate. In this respect, the condition on the integral of $p$ is posed in order to make the comparison with the case $p\equiv 1$ consistent. It is worth mentioning that a related linear problem has been recently treated in \cite{bbgza}, by studying the equation
$$
\Delta^2 u=\frac{f(x,y)}{1+d\chi_D(x,y)}\,\quad  \text{in } \Omega
$$
subject to the boundary conditions in \eq{weight}, where $\chi_D$ is the characteristic function of $D \subset \Omega$ and $d$ is a positive constant. The solution $u$ of this equation describes the
vertical displacement of the plate under the action of a load $f$ and the weight $1/(1+d\chi_D(x,y))$ is seen as an aerodynamic damper placed in $D$ in order to reduce the action of the external force.
The spectral analysis of \eq{weight} can help to complete and enrich the results obtained in \cite{bbgza}.

Coming back to \eqref{weight}, the natural starting point of the study is the investigation of the effect of $p$ on the first eigenvalue
$\lambda_1(p)$, namely to study:
$$
\inf_{p \in P_{\alpha, \beta}} \, \lambda_1(p).
$$
The minimization of the first eigenvalue for the second order Dirichlet version of \eqref{weight}, named \emph{composite membrane} problem, has been studied in \cite{chanillo1}-\cite{chanillo3},\cite{sha}, while the minimization of the first eigenvalue for the equation in \eqref{weight} under Dirichlet or Navier boundary conditions, named \emph{composite plate} problem, has been studied in \cite{anedda1},\cite{anedda2},\cite{CV}-\cite{cuccu2}. Finally, we refer to \cite{lapr} for a detailed stability analysis, upon variation of $p$, of the weighted eigenvalues of general elliptic operators of arbitrary order subject to several kinds of homogeneous boundary conditions. In this field of research, typical results are existence of optimal pairs and their qualitative properties, such as symmetry or symmetry breaking.
From this point of view a crucial obstruction, when passing from the membrane to the plate problem, namely from the second to the fourth order case,
is represented by the loss of maximum and comparison principles which usually enter either in the study of the simplicity of the first eigenvalue and
in the techniques applied to prove symmetry results, such as reflections methods or moving planes techniques. Nevertheless, a suitable choice of the
boundary conditions (e.g. Navier or Steklov b.c.) or of the geometry of the domain (e.g. small perturbations of balls) may yield the validity of
so-called \emph{positivity preserving property} which basically means that solutions, of the associated linear problem, maintain the sign of data.
Concerning problem \eqref{weight},
the difficulties in its analysis, are even increased by the choice of the unusual boundary conditions for which no positivity
preserving property is known. As far as we are aware, the minimization of the first eigenvalue of problem \eq{weight} has not been considered in literature, hence the present paper
represents the first contribution on this topic. In our analysis we take advantage of the fact that $\Omega$ is a planar domain and,
when restricting the class of weights, some explicit computations can be performed. On the other hand, we exploit a sort of restricted positivity
preserving property with respect to the $y$ variable, that we prove in Theorem \ref{corPPP} below, having its own theoretical interest. We note that
the above mentioned restriction on admissible weights is also justified by the applicative nature of our problem. Indeed, it is known that minimization problems,
like the composite membrane problem, naturally lead to homogenization \cite{murat}, see also \cite{ksw} for a stiffening problem for the torsion of a bar.
Homogenization would lead to optimal designs with reinforcements scattered throughout the structure, namely designs impossible to reproduce for engineers.
In order to avoid homogenization, the class of admissible reinforcements should be sufficiently small. See also Nazarov-Sweers-Slutskij \cite{nazarov},
where only ``macro'' reinforcements are considered, although in a fairly different setting.

As we have already remarked, in order to improve the stability of the plate, the final goal of the spectral analysis of problem \eq{weight} is to maximize the distance between selected oscillating modes. The present paper has to be meant as a first contribution in this direction and it should be hopefully followed by the optimization analysis of the higher eigenvalues. This, together with the further investigation of the positivity properties of the operator in \eq{weight}, represents a promising topic of research that we plan to develop in our future studies.

The paper is organised as follows. Section \ref{setting} is devoted to the description of the notations and of some results about the case
$p\equiv 1$. In Section \ref{s:main} one can find the main results of the paper which are proved in Sections \ref{s:proofs-1} and \ref{spectrum}. In Section \ref{numerics} we show some numerical results on the behaviour of the eigenvalues which complement our theoretical analysis. Finally, in Section \ref{pppproof}
we show the validity of a positivity preserving property for a one dimensional fourth order problem, coming from a suitable Fourier decomposition of solutions
to the plate problem.

\section{Notations and known results when $p\equiv 1$}\label{setting}
 From now onward we fix $\Omega=(0,\pi)\times(-\ell,\ell)\subset\R^2\,$ with $\ell>0$ and $0<\sigma<\frac{1}{2}$.
The natural functional space where to set problem \eq{weight} is
$$
H^2_*(\Omega)=\big\{u\in H^2(\Omega): u=0\mathrm{\ on\ }\{0,\pi\}\times(-\ell,\ell)\big\}\,.
$$
$H^2_*(\Omega)$ is a Hilbert space when endowed with the scalar product
$$
(u,v)_{H^2_*}:=\int_\Omega \left[\Delta u\Delta v+(1-\sigma)(2u_{xy}v_{xy}-u_{xx}v_{yy}-u_{yy}v_{xx})\right]\, dx \, dy \,
$$
and associated norm
$$
\|u\|_{H^2_*(\Omega)}^2=(u,u)_{H^2_*(\Omega)} \, ,
$$
which is equivalent to the usual norm in $H^2(\Omega)$, see \cite[Lemma 4.1]{fergaz}. 
 Then problem \eq{weight} may also be formulated in the following weak sense
\begin{equation}
\label{eigenweak}
(u,v)_{H^2_*(\Omega)} =\lambda\int_{\Omega}p(x,y)uv\,dx\,dy \qquad\forall v\in H^2_*(\Omega),
\end{equation}
where $p$ belongs to the family of weights $P_{\alpha,\beta}$ defined in \eqref{eq:famiglia} with $\alpha,\beta \in (0,+\infty)$ and $\alpha<\beta$ fixed. We point out that condition $p\in P_{\alpha,\beta}$ implies
$\alpha \leq 1 \leq \beta$ since $\int_\Omega p\,dx\,dy=|\Omega|$.
Moreover, we observe that it is not restrictive to assume
$\alpha<1<\beta$ when we focus our analysis on weights that do not
coincide a.e. with the constant function $p\equiv 1$. Indeed, if
we assume that $\beta=1$, it must be $p=1$ a.e. in $\Omega$ since
otherwise we would have $\int_\Omega p\,dx\,dy<|\Omega|$;
similarly, if we assume that $\alpha=1$, it must be $p=1$ a.e. in $\Omega$. For this reason, since the aim of our research is to study the
effect of a non constant weight on the first eigenvalue of
\eq{weight}, in what follows we will always assume
$\alpha<1<\beta$.

The bilinear form $(u,v)_{H^2_*}$ is continuous
and coercive and $p\in L^\infty(\Omega)$ is positive a.e. in $\Omega$, therefore
standard spectral theory of self-adjoint operators shows that the eigenvalues of \eq{weight} may be ordered in an increasing sequence of strictly positive
numbers diverging to $+\infty$ and that the corresponding eigenfunctions form a complete orthonormal system in $H^2_*(\Omega)$.
%

Since $p\in L^{\infty}(\Omega),$ by elliptic regularity the eigenfunctions are at least in $C^{2}(\overline \Omega).$ Furthermore, the first eigenvalue is characterized by
\begin{equation}\label{lambdaP}
 \lambda_1(p) :=  \, \min_{u \in H^2_*(\Omega) \setminus\{0\}} \frac{\|u\|_{H^2_*}^2}{\|\sqrt{p}\,u\|_{2}^2}.
\end{equation}

\par
When $p\equiv 1$ the spectrum of \eq{weight} has been completely characterized. We recall the following statement from \cite{fergaz},  including some refinements on the eigenvalues estimates proved in \cite{bebuga2}.

\begin{proposition}\label{eigenvalue}
Let $p\equiv 1$ in \eqref{weight}. The set of eigenvalues of \eqref{weight} may be ordered in an increasing sequence of strictly positive numbers diverging to $+\infty$
and any eigenfunction belongs to $C^\infty(\overline\Omega)$; the set of eigenfunctions of \eqref{weight} is a complete system in $H^2_*(\Omega)$. Moreover:\par\noindent
$(i)$ for any $m\ge1$, there exists a unique eigenvalue $\lambda=\mu_{m,1}\in((1-\sigma^2) m^4,m^4)$ with corresponding eigenfunction
$$\left[\big[\mu_{m,1}^{1/2}-(1-\sigma)m^2\big]\, \tfrac{\cosh\Big(y\sqrt{m^2+\mu_{m,1}^{1/2}}\Big)}{\cosh\Big(\ell\sqrt{m^2+\mu_{m,1}^{1/2}}\Big)}+
\big[\mu_{m,1}^{1/2}+(1-\sigma)m^2\big]\, \tfrac{\cosh\Big(y\sqrt{m^2-\mu_{m,1}^{1/2}}\Big)}{\cosh\Big(\ell\sqrt{m^2-\mu_{m,1}^{1/2}}\Big)}\right]\sin(mx)\, ;$$
$(ii)$ for any $m\ge1$ and any $k\ge2$ there exists a unique eigenvalue $\lambda=\mu_{m,k}>m^4$ satisfying

$\left(m^2+\frac{\pi^2}{\ell^2}\left(k-\frac{3}{2}\right)^2\right)^2<\mu_{m,k}<\left(m^2+\frac{\pi^2}{\ell^2}\left(k-1\right)^2\right)^2$

and with corresponding eigenfunction
$$
\left[\big[\mu_{m,k}^{1/2}-(1-\sigma)m^2\big]\, \tfrac{\cosh\Big(y\sqrt{\mu_{m,k}^{1/2}+m^2}\Big)}{\cosh\Big(\ell\sqrt{\mu_{m,k}^{1/2}+m^2}\Big)}
+\big[\mu_{m,k}^{1/2}+(1-\sigma)m^2\big]\, \tfrac{\cos\Big(y\sqrt{\mu_{m,k}^{1/2}-m^2}\Big)}{\cos\Big(\ell\sqrt{\mu_{m,k}^{1/2}-m^2}\Big)}\right]\sin(mx)\, ;
$$

$(iii)$ for any $m\ge1$ and any $k\ge2$ there exists a unique eigenvalue $\lambda=\nu_{m,k}>m^4$ with corresponding eigenfunctions
$$
\left[\big[\nu_{m,k}^{1/2}-(1-\sigma)m^2\big]\, \tfrac{\sinh\Big(y\sqrt{\nu_{m,k}^{1/2}+m^2}\Big)}{\sinh\Big(\ell\sqrt{\nu_{m,k}^{1/2}+m^2}\Big)}
+\big[\nu_{m,k}^{1/2}+(1-\sigma)m^2\big]\, \tfrac{\sin\Big(y\sqrt{\nu_{m,k}^{1/2}-m^2}\Big)}{\sin\Big(\ell\sqrt{\nu_{m,k}^{1/2}-m^2}\Big)}\right]\sin(mx)\, ;
$$
$(iv)$ for any $m\ge1$ satisfying $\ell m\sqrt 2\, \coth(\ell m\sqrt2 )>\left(\frac{2-\sigma}{\sigma}\right)^2$ there exists a unique
eigenvalue $\lambda=\nu_{m,1}\in(\mu_{m,1},m^4)$ with corresponding eigenfunction
$$\left[\big[\nu_{m,1}^{1/2}-(1-\sigma)m^2\big]\, \tfrac{\sinh\Big(y\sqrt{m^2+\nu_{m,1}^{1/2}}\Big)}{\sinh\Big(\ell\sqrt{m^2+\nu_{m,1}^{1/2}}\Big)}
+\big[\nu_{m,1}^{1/2}+(1-\sigma)m^2\big]\, \tfrac{\sinh\Big(y\sqrt{m^2-\nu_{m,1}^{1/2}}\Big)}{\sinh\Big(\ell\sqrt{m^2-\nu_{m,1}^{1/2}}\Big)}\right]
\sin(mx)\, .$$
Finally, if
\begin{equation}\label{c0}
\text{the unique positive solution $s>0$ of: }\tanh(\sqrt{2}s\ell)=\left(\frac{\sigma}{2-\sigma}\right)^2\, \sqrt{2}s\ell\quad  \text{is not an integer,}
\end{equation}
then the only eigenvalues are the ones given in $(i)-(iv)$.

\end{proposition}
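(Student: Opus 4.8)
The plan is to diagonalise problem \eqref{weight} with $p\equiv1$ by separation of variables, taking advantage of the Navier conditions at $x=0,\pi$. First I would write $u(x,y)=\sum_{m\ge1}\varphi_m(y)\sin(mx)$, which is legitimate because $\{\sin(mx)\}_{m\ge1}$ is complete on $(0,\pi)$ and each term meets the conditions $u=u_{xx}=0$ at $x=0,\pi$; inserting this series into $\Delta^2u=\lambda u$ and into the long-edge conditions, one sees that $u$ is an eigenfunction precisely when, for every $m$ with $\varphi_m\not\equiv0$, the coefficient $\varphi_m$ solves
\begin{equation}\label{proofode}
\varphi_m''''-2m^2\varphi_m''+(m^4-\lambda)\varphi_m=0\qquad\text{in }(-\ell,\ell),
\end{equation}
together with the reduced natural conditions $\varphi_m''(\pm\ell)-\sigma m^2\varphi_m(\pm\ell)=0$ and $\varphi_m'''(\pm\ell)-(2-\sigma)m^2\varphi_m'(\pm\ell)=0$. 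Hence the spectrum of \eqref{weight} is the union over $m\ge1$ of the spectra of these one dimensional self-adjoint problems; completeness of the eigenfunctions in $H^2_*(\Omega)$ then follows from completeness of $\{\sin(mx)\}$ in the $x$ variable together with the fact that each $1$D problem has an $L^2(-\ell,\ell)$-basis of eigenfunctions, and $C^\infty(\overline\Omega)$-smoothness is clear either from the explicit form obtained below or by elliptic bootstrap, since $p\equiv1$.

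Next I would solve \eqref{proofode} explicitly for each fixed $m$. Positivity of $\|\cdot\|_{H^2_*}^2$ forces $\lambda>0$, so the characteristic roots of \eqref{proofode} are $\pm\sqrt{m^2+\sqrt\lambda}$ and $\pm\sqrt{m^2-\sqrt\lambda}$: the second pair is real when $\lambda<m^4$, purely imaginary when $\lambda>m^4$, and $\lambda=m^4$ is a degenerate case treated separately. Since \eqref{proofode} and its boundary conditions are invariant under $y\mapsto-y$, the problem splits into an even part (combinations of $\cosh(y\sqrt{m^2+\sqrt\lambda})$ with $\cosh(y\sqrt{m^2-\sqrt\lambda})$ or $\cos(y\sqrt{\sqrt\lambda-m^2})$) and an odd part (with $\sinh$, and $\sinh$ or $\sin$). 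In each part the ODE has a two dimensional solution space, and imposing the two boundary conditions at $y=\ell$ (those at $y=-\ell$ being then automatic) produces a $2\times2$ homogeneous linear system whose vanishing determinant is the eigenvalue condition. A convenient device — which at the same time exhibits exactly the eigenfunctions of $(i)$–$(iv)$ — is to notice that the linear combination with coefficients $\sqrt\lambda-(1-\sigma)m^2$ and $\sqrt\lambda+(1-\sigma)m^2$ kills the first boundary condition identically; the second one then collapses to a single transcendental equation of the form
\[
\bigl[\sqrt\lambda-(1-\sigma)m^2\bigr]^2\,\sqrt{m^2+\sqrt\lambda}\;T_1(\ell\sqrt{m^2+\sqrt\lambda})=\pm\,\bigl[\sqrt\lambda+(1-\sigma)m^2\bigr]^2\,\sqrt{|m^2-\sqrt\lambda|}\;T_2(\ell\sqrt{|m^2-\sqrt\lambda|}),
\]
where $T_1=\tanh$ in the even case and $T_1=\coth$ in the odd case, while $T_2$ and the sign are $\tanh$, $\coth$, $-\tan$ or $\cot$ according to the parity and to whether $\lambda<m^4$ or $\lambda>m^4$.

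The core of the proof is then the analysis of these four transcendental equations. For $\lambda<m^4$ I would study the difference of the two sides on $(0,m^4)$, using the monotonicity of $s\mapsto s\tanh(\ell s)$ and $s\mapsto s\coth(\ell s)$ and the signs at the endpoints: in the even case this gives a unique root, which is moreover confined to $((1-\sigma^2)m^4,m^4)$, namely $\mu_{m,1}$; in the odd case a root in $(\mu_{m,1},m^4)$ exists if and only if $\ell m\sqrt2\,\coth(\ell m\sqrt2)>((2-\sigma)/\sigma)^2$ — this inequality being precisely the comparison of the two sides as $\lambda\uparrow m^4$ — and it is $\nu_{m,1}$. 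For $\lambda>m^4$ the factor $T_2$ is $\pi$-periodic with simple poles, so exactly one root of each equation lies in each successive window between consecutive poles; rewriting the window $\ell\sqrt{\sqrt\lambda-m^2}\in(\pi(k-\tfrac{3}{2}),\pi(k-1))$ in terms of $\lambda$ gives the two-sided bounds $\bigl(m^2+\tfrac{\pi^2}{\ell^2}(k-\tfrac{3}{2})^2\bigr)^2<\mu_{m,k}<\bigl(m^2+\tfrac{\pi^2}{\ell^2}(k-1)^2\bigr)^2$ for $k\ge2$, and analogously the $\nu_{m,k}$. Finally, for $\lambda=m^4$ the ODE has the double root $0$, so the odd solutions are spanned by $\sinh(\sqrt2\,my)$ and $y$ (the even ones by $\cosh(\sqrt2\,my)$ and $1$, which give nothing), and the boundary conditions admit a nontrivial $\varphi_m$ exactly when $\tanh(\sqrt2\,m\ell)=(\sigma/(2-\sigma))^2\sqrt2\,m\ell$, i.e. when the number $s$ of \eqref{c0} equals the integer $m$; therefore, under \eqref{c0}, the list $(i)$–$(iv)$ is complete. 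The hard part will be precisely this last, somewhat lengthy bookkeeping: keeping the four cases apart, proving that each relevant window contains one and only one root rather than merely at least one, and pinning down the sharp enclosing intervals — all of which rely on delicate but elementary monotonicity estimates for $\tanh$, $\coth$, $\tan$ and $\cot$.
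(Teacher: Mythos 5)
The paper does not actually prove Proposition \ref{eigenvalue}: it is recalled from \cite{fergaz}, with the sharper bounds on the $\mu_{m,k}$ taken from \cite{bebuga2}, so the only ``proof'' in the paper is the citation. Your proposal reconstructs precisely the argument of those references --- Fourier decomposition in $x$, the one-dimensional fourth-order problem in $y$ split into even and odd parts, the observation that the combination with coefficients $\sqrt\lambda-(1-\sigma)m^2$ and $\sqrt\lambda+(1-\sigma)m^2$ annihilates the first boundary condition and collapses the second to the stated transcendental equations, the endpoint comparison at $\lambda\uparrow m^4$ giving the condition in $(iv)$, and the degenerate case $\lambda=m^4$ producing exactly \eqref{c0} --- and the algebraic identities you rely on do check out. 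The one caveat is that the uniqueness-in-each-window and monotonicity analysis of the four transcendental equations, which is where the real work lies in \cite{fergaz} and \cite{bebuga2}, is acknowledged in your sketch but not carried out.
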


In the following, to avoid too many distinctions, we will always assume that \eq{c0} holds.

By Proposition \ref{eigenvalue} and \cite[Section 7]{fergaz} it is readily deduced that the first eigenvalue of problem \eqref{weight} with $p\equiv1$ is $\mu_{1,1}$,
namely $ \lambda_1(1)=\mu_{1,1}$, it is simple and up to constant multiplier the first eigenfunction is given by
\begin{equation}\label{u1}
u_1(x,y)= \left[\big[\mu_{1,1}^{1/2}-(1-\sigma)\big]\, \tfrac{\cosh\Big(y\sqrt{1+\mu_{1,1}^{1/2}}\Big)}{\cosh\Big(\ell\sqrt{1+\mu_{1,1}^{1/2}}\Big)}+
\big[\mu_{1,1}^{1/2}+(1-\sigma)\big]\, \tfrac{\cosh\Big(y\sqrt{1-\mu_{1,1}^{1/2}}\Big)}{\cosh\Big(\ell\sqrt{1-\mu_{1,1}^{1/2}}\Big)}\right] \sin x\,.
\end{equation}
Hence, $u_1$ is positive in $\Omega$, convex in the $y-$variable and concave in the $x-$variable.  \par

\section{Main results} \label{s:main}
As in Section \ref{setting} we always assume
$$
0<\sigma<\frac{1}{2}\quad\text{and}\quad \alpha<1<\beta \quad (\alpha,\beta\in (0,+\infty)).
$$
Then, recalling \eqref{eq:famiglia}, we focus on the infimum problem
\begin{equation}\label{CP}
\lambda_{\alpha,\beta} :=\inf_{p \in P_{\alpha,\beta}} \, \lambda_1(p),
\end{equation}
where $\lambda_1(p)$ is defined in \eqref{lambdaP}.
\begin{definition}
A couple $ (\overline{p},u_{\overline p}) \in P_{\alpha,\beta} \times H^2_*(\Omega)$ is called {\em optimal pair} if $\overline p$ achieves the infimum in \eqref{CP} and $u_{\overline p}$ is an eigenfunction of $\lambda_1(\overline p)$ .
\end{definition}

Adapting to our case \cite[Theorem 13]{chanillo} and \cite[Theorem 1.4]{CV}, we show that there exists an {\rm optimal pair}  $ (\overline{p},u_{\overline p}) $ for problem \eqref{CP} and $u_{\overline p}$ and $\overline{p}$ are suitably related. Using the language of the control theory we find that $\overline p $ is a \textit{bang-bang} function; more precisely we prove

\begin{theorem}\label{thm-exist-qual}
There exists and optimal pair $(\overline{p},u_{\overline p}) \in P_{\alpha,\beta} \times H^2_*(\Omega)$.
Furthermore, $\overline{p}$ and $u_{\overline p}$ are related as follows
\begin{equation}\label{pS}
\overline{p}(x,y) = \alpha \chi_{S} (x,y)+ \beta \chi_{\Omega \setminus S}(x,y)\,\quad \text{for a.e. }\quad (x,y)\in \Omega\,,
\end{equation}
where $\chi_{S}$ and $ \chi_{\Omega \setminus S}$ are the characteristic functions of the sets $S$ and $\Omega \setminus S$ and $S\subset \Omega$
is such that $|S|=\frac{\beta-1}{\beta-\alpha}\,|\Omega|$ and $S = \{ (x,y)\in \Omega\,:\,u_{\overline p}^2(x,y) \leq t \}$ for some $t> 0$.
\end{theorem}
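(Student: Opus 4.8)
The plan is to adapt the classical strategy for composite membrane and composite plate problems: prove existence of an optimal pair by the direct method of the calculus of variations, and then deduce the bang-bang structure of $\overline p$ from a rearrangement (``bathtub'') argument that exploits the optimality of $\overline p$ in \eqref{CP}.

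For \emph{existence}, I would take a minimizing sequence $p_n\in P_{\alpha,\beta}$ with $\lambda_1(p_n)\to\lambda_{\alpha,\beta}$ (note $\lambda_{\alpha,\beta}\le\lambda_1(1)<\infty$, since $p\equiv1\in P_{\alpha,\beta}$), together with normalized first eigenfunctions $u_n\in H^2_*(\Omega)$, say $\int_\Omega p_nu_n^2\,dx\,dy=1$, so that $\|u_n\|_{H^2_*}^2=\lambda_1(p_n)$ is bounded and $\|u_n\|_{L^2}^2\le1/\alpha$. Since $\alpha\le p_n\le\beta$, up to a subsequence $p_n\weakly\overline p$ weakly-$*$ in $L^\infty(\Omega)$; testing the convergence against $\chi_A$ for measurable $A\subset\Omega$ gives $\alpha\le\overline p\le\beta$ a.e., and against $1$ gives $\int_\Omega\overline p=|\Omega|$, so $\overline p\in P_{\alpha,\beta}$. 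Since $(u_n)$ is bounded in $H^2_*(\Omega)$, up to a further subsequence $u_n\weakly u$ in $H^2_*(\Omega)$ and, by the compact embedding $H^2_*(\Omega)\hookrightarrow\hookrightarrow L^2(\Omega)$, $u_n\to u$ in $L^2(\Omega)$. Splitting $\int_\Omega p_nu_n^2-\int_\Omega\overline p\,u^2=\int_\Omega p_n(u_n^2-u^2)+\int_\Omega(p_n-\overline p)u^2$, the first term tends to $0$ because $|p_n|\le\beta$ and $u_n^2\to u^2$ in $L^1(\Omega)$, and the second because $u^2\in L^1(\Omega)$; hence $\int_\Omega\overline p\,u^2=1$, so $u\ne0$. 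By weak lower semicontinuity, $\|u\|_{H^2_*}^2\le\liminf\|u_n\|_{H^2_*}^2=\lambda_{\alpha,\beta}$, whence by \eqref{lambdaP} $\lambda_1(\overline p)\le\|u\|_{H^2_*}^2\big/\int_\Omega\overline p\,u^2=\|u\|_{H^2_*}^2\le\lambda_{\alpha,\beta}\le\lambda_1(\overline p)$; all inequalities are therefore equalities, $u=:u_{\overline p}$ attains the minimum in \eqref{lambdaP} for $p=\overline p$, and $(\overline p,u_{\overline p})$ is an optimal pair.

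For the \emph{qualitative part}, I first record the optimality condition: for every $q\in P_{\alpha,\beta}$, inserting $u_{\overline p}$ in the Rayleigh quotient \eqref{lambdaP} for the weight $q$ and using $\|u_{\overline p}\|_{H^2_*}^2=\lambda_{\alpha,\beta}\int_\Omega\overline p\,u_{\overline p}^2$ yields $\lambda_1(q)\le\lambda_{\alpha,\beta}\,\int_\Omega\overline p\,u_{\overline p}^2\big/\int_\Omega q\,u_{\overline p}^2$, which combined with $\lambda_1(q)\ge\lambda_{\alpha,\beta}$ gives $\int_\Omega q\,u_{\overline p}^2\le\int_\Omega\overline p\,u_{\overline p}^2$. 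Thus, with $f:=u_{\overline p}^2\ge0$, the weight $\overline p$ maximizes the linear functional $q\mapsto\int_\Omega q\,f$ over the convex set $P_{\alpha,\beta}$. Next I would show the positive level sets of $f$ are negligible: since $u_{\overline p}\in C^2(\overline\Omega)$ and, by interior elliptic regularity, $u_{\overline p}\in W^{4,q}_{\mathrm{loc}}(\Omega)$ for every $q<\infty$, and since $\Delta^2u_{\overline p}=\lambda_{\alpha,\beta}\,\overline p\,u_{\overline p}$, the classical lemma on the vanishing of weak derivatives on level sets gives $\Delta^2u_{\overline p}=0$ a.e.\ on $\{u_{\overline p}=c\}$ for every $c$; when $c\ne0$ this contradicts $\Delta^2u_{\overline p}=\lambda_{\alpha,\beta}\overline p\,c\ne0$ a.e.\ (recall $\overline p\ge\alpha>0$, $\lambda_{\alpha,\beta}>0$), so $|\{f=t\}|=0$ for every $t>0$. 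Now set $m:=\frac{1-\alpha}{\beta-\alpha}|\Omega|$ and $t:=\inf\{\tau\ge0:|\{f>\tau\}|\le m\}$ (finite since $f$ is bounded); using that $\tau\mapsto|\{f>\tau\}|$ is right-continuous and that level sets are null one gets $|\{f>t\}|=m$. If $\{f>t\}\cap\{\overline p<\beta\}$ had positive measure, then $\int_{\{f>t\}}\overline p<\beta m$, hence $\int_{\{f\le t\}}\overline p>|\Omega|-\beta m=\alpha|\{f\le t\}|$, so $\{f\le t\}\cap\{\overline p>\alpha\}$ would have positive measure too; choosing positive-measure subsets $E_1\subset\{f\ge t+\eta\}\cap\{\overline p\le\beta-\delta\}$ and $E_2\subset\{f\le t\}\cap\{\overline p\ge\alpha+\delta\}$ and replacing $\overline p$ by $\overline p+\eps\bigl(\chi_{E_1}-\tfrac{|E_1|}{|E_2|}\chi_{E_2}\bigr)\in P_{\alpha,\beta}$ (for $\eps$ small) would strictly increase $\int_\Omega q\,f$, contradicting maximality. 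Hence $\overline p=\beta$ a.e.\ on $\{f>t\}$, and then $\int_\Omega\overline p=|\Omega|$ together with $\overline p\ge\alpha$ on $\{f\le t\}$ forces $\overline p=\alpha$ a.e.\ on $\{f\le t\}$. Therefore $\overline p=\alpha\chi_S+\beta\chi_{\Omega\setminus S}$ with $S=\{u_{\overline p}^2\le t\}$, and $\int_\Omega\overline p=|\Omega|$ pins down $|S|=\frac{\beta-1}{\beta-\alpha}|\Omega|$. Finally $t>0$: otherwise $S=\{u_{\overline p}=0\}$ would have positive measure $\frac{\beta-1}{\beta-\alpha}|\Omega|>0$, which is impossible since a nontrivial solution of $\Delta^2u=Vu$ with $V\in L^\infty(\Omega)$ cannot vanish on a set of positive measure, by unique continuation for the bi-Laplacian.

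The existence step is routine compactness. The points I expect to be the main obstacle are: (i) the exchange argument — ensuring the perturbed weights stay in $P_{\alpha,\beta}$ and that the variation of $\int_\Omega q\,f$ is genuinely positive; and (ii) showing that the relevant level set $\{u_{\overline p}^2=t\}$ is negligible and that $t>0$, i.e.\ that the optimal first eigenfunction does not vanish on a set of positive measure. This last point is where the absence of a maximum principle for \eqref{weight} makes matters less automatic than in the second-order case, and it seems to require a unique continuation argument (or the positivity information established separately in the paper).
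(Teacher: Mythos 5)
Your existence step coincides with the paper's Lemma \ref{exist}, and your qualitative analysis is a legitimate and in fact more direct route than the paper's: instead of passing through the auxiliary problem \eqref{ACP} and Lemmas \ref{thm-1}--\ref{Lambda}, you observe that optimality forces $\overline p$ to maximize $q\mapsto\int_\Omega q\,u_{\overline p}^2$ over $P_{\alpha,\beta}$ and then run a bathtub/exchange argument; your use of the level-set lemma for $W^{4,1}_{\mathrm{loc}}$ functions to discard the sets $\{u_{\overline p}^2=t\}$, $t>0$, parallels Step 3 of the paper's Lemma \ref{Lambda}. Up to that point the argument is sound.

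The genuine gap is the proof that $t>0$, i.e.\ that $u_{\overline p}$ cannot vanish on a set of positive measure. You invoke ``unique continuation for the bi-Laplacian'' from sets of positive measure; for fourth order operators with an $L^\infty$ potential this is not a standard citable fact (classical strong unique continuation statements concern vanishing of infinite order at a point, not vanishing on a measurable set), and your proposed fallback --- the positivity information established elsewhere in the paper --- is not available here, because Lemma \ref{sign} and Theorem \ref{signTH} give positivity of the first eigenfunction only for weights in the restricted class $\overline P_{\alpha,\beta}$ of even, $y$-dependent weights, not for a general optimal $\overline p\in P_{\alpha,\beta}$. The paper avoids unique continuation altogether: if $t=0$ the relevant sublevel set is contained in $\{u_{\overline p}=0\}$, so the Euler--Lagrange equation reduces a.e.\ to the \emph{unweighted} problem $\Delta^2 u_{\overline p}=\Lambda_{\alpha,\beta}\,u_{\overline p}$ with the partially hinged boundary conditions, and Proposition \ref{eigenvalue} (under assumption \eqref{c0}) then says $u_{\overline p}$ must be one of the explicitly listed eigenfunctions, each of the form $\phi(y)\sin(mx)$ with nodal set of measure zero, contradicting $|\{u_{\overline p}=0\}|\ge\frac{\beta-1}{\beta-\alpha}|\Omega|>0$. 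You can graft this onto your primal argument: when $t=0$ your exchange argument still yields $\overline p=\beta$ a.e.\ on $\{u_{\overline p}\neq 0\}$, so the equation is unweighted a.e.\ in $\Omega$. A related ordering wrinkle: your identity $|\{u_{\overline p}^2>t\}|=\frac{1-\alpha}{\beta-\alpha}|\Omega|$ relies on $|\{u_{\overline p}^2=t\}|=0$, proved only for $t>0$, so the case $t=0$ must be excluded \emph{before} that computation (this is easily repaired, since $t=0$ gives $|\{u_{\overline p}=0\}|\ge\frac{\beta-1}{\beta-\alpha}|\Omega|$ directly from the definition of $t$), but that is a matter of rearranging the proof rather than a flaw in the idea.
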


Theorem \ref{thm-exist-qual} states that the plate has to be made of two materials, but it gives no information about the location of the materials and hence, no practical information on how to build the plate. To this aim, a more explicit suggestion is provided by the following

\begin{proposition}\label{comparison_esy}
Let $p\in P_{\alpha,\beta}$ satisfy one of the following assumptions
\begin{itemize}
\item[(i)] $p=p(y)$ is even and there exists $z\in (0, \ell)$ such that
$$p(y)\leq 1 \quad \text{for } y\in [0,z] \quad \text{ and } \quad p(y)\geq 1 \quad \text{for } y\in [z,\ell)\,.$$
\item[(ii)] $p=p(x)$ is symmetric with respect to the line $x=\frac{\pi}{2}$ and there exists $s \in (0, \frac{\pi}{2})$ such that
$$p(x)\leq 1 \quad \text{for } x\in (0,s] \quad \text{ and } \quad p(x)\geq 1 \quad \text{for } x\in [s,\frac{\pi}{2}]\,.$$
\end{itemize}
Then,
\begin{equation}\label{comparison}
\lambda_1(p)\leq \lambda_1(1)=\mu_{1,1}\,,
\end{equation}
where $\mu_{1,1}$ is as defined in Proposition \ref{eigenvalue}-(i).
\end{proposition}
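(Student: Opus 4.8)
The plan is to estimate $\lambda_1(p)$ from above by inserting, in the Rayleigh quotient \eqref{lambdaP}, the explicit unweighted first eigenfunction $u_1$ from \eqref{u1} as a test function. Since $u_1\in H^2_*(\Omega)\setminus\{0\}$ is an eigenfunction of \eqref{weight} with $p\equiv 1$, one has $\|u_1\|_{H^2_*}^2=\mu_{1,1}\|u_1\|_2^2$; moreover $\int_\Omega p\,u_1^2\,dx\,dy\ge\alpha\int_\Omega u_1^2\,dx\,dy>0$, so taking $u=u_1$ in \eqref{lambdaP} gives
\[
\lambda_1(p)\ \le\ \frac{\|u_1\|_{H^2_*}^2}{\int_\Omega p\,u_1^2\,dx\,dy}\ =\ \mu_{1,1}\,\frac{\int_\Omega u_1^2\,dx\,dy}{\int_\Omega p\,u_1^2\,dx\,dy}\,.
\]
Thus \eqref{comparison} will follow once we prove the single inequality $\int_\Omega (p-1)\,u_1^2\,dx\,dy\ge 0$.

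To establish this I would use the separated form $u_1(x,y)=\phi(y)\sin x$ read off from \eqref{u1}, where $\phi$ is even, strictly positive on $[-\ell,\ell]$ (since $u_1>0$ in $\Omega$) and, being convex and even in $y$ (as observed right after \eqref{u1}), nondecreasing on $[0,\ell]$; hence $\phi^2$ is nondecreasing on $[0,\ell]$. Similarly $\sin^2 x$ is symmetric about $x=\pi/2$ and increasing on $(0,\pi/2)$. In case $(i)$, since $p=p(y)$ is even, Fubini's theorem gives
\[
\int_\Omega (p-1)\,u_1^2\,dx\,dy=\Big(\int_0^\pi\sin^2x\,dx\Big)\int_{-\ell}^{\ell}(p(y)-1)\,\phi(y)^2\,dy=2\Big(\int_0^\pi\sin^2x\,dx\Big)\int_0^{\ell}(p(y)-1)\,\phi(y)^2\,dy\,,
\]
and the constraint $\int_\Omega p\,dx\,dy=|\Omega|$ together with the evenness of $p$ forces $\int_0^{\ell}(p(y)-1)\,dy=0$. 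Subtracting $\phi(z)^2\int_0^\ell(p-1)\,dy=0$ rewrites the last integral as $\int_0^{\ell}(p(y)-1)\big(\phi(y)^2-\phi(z)^2\big)\,dy$: on $[0,z]$ both factors $p-1$ and $\phi^2-\phi(z)^2$ are $\le 0$, while on $[z,\ell]$ both are $\ge 0$, so the integrand is pointwise nonnegative and we are done. Case $(ii)$ is handled identically, integrating out $y$ first and using the symmetry of $p=p(x)$ and of $\sin^2x$ about $\pi/2$ (which gives $\int_0^{\pi/2}(p(x)-1)\,dx=0$), then subtracting $\sin^2 s$ inside the $x$-integral before splitting $(0,\pi/2)$ into $(0,s]$ and $[s,\pi/2]$.

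This is in essence a Chebyshev/rearrangement-type argument, so I do not expect a real obstacle. The only ingredients are the sign and the one-variable monotonicity of $u_1$, both read off the closed form \eqref{u1}, and the zero-average property of $p-1$ on the half-interval (obtained from the symmetry of $p$), which is exactly what permits subtracting the constant $\phi(z)^2$ (resp. $\sin^2 s$) so as to make the integrand nonnegative. The one step warranting a line of care is the deduction that $\phi$ is nondecreasing on $[0,\ell]$ from the convexity and evenness of $u_1$ in $y$: this holds because a $C^1$ even convex function has vanishing derivative at the origin and nondecreasing derivative, hence nonnegative derivative on $[0,\ell]$.
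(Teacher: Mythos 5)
Your proposal is correct and follows essentially the same route as the paper: test the Rayleigh quotient \eqref{lambdaP} with $u_1=\varphi_1(y)\sin x$ and reduce the claim to $\int_\Omega(p-1)u_1^2\,dx\,dy\ge 0$, which is obtained from the monotonicity of $\varphi_1$ (resp.\ of $\sin^2 x$ on $(0,\pi/2)$) and the zero mean of $p-1$ on the half-interval. The only cosmetic difference is that you subtract the constant $\varphi_1(z)^2$ (resp.\ $\sin^2 s$) to make the integrand pointwise nonnegative, whereas the paper bounds the integrand by that constant times $1-p$ before integrating; these are the same Chebyshev-type estimate.
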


\begin{remark}
The same idea of the proof of Proposition \ref{comparison_esy}-(i) can be repeated to prove that \eqref{comparison} holds if
$p\in P_{\alpha,\beta}$ satisfies
\begin{itemize}
\item[(iii)] $p=p(y)$ is even and there exist $2N+2$ points $0=y_0<y_1<y_2<...<y_{2N+2}=\ell$ such that
$$p(y)\leq 1 \quad \text{for } y\in [y_{2h}, y_{2h+1}]\,, \quad p(y)\geq 1 \quad \text{for } y\in  [y_{2h+1}, y_{2h+2}] \quad \text{ and } \int_{y_{2h}}^{y_{2h+2}}(p-1)\, dy=0 \,,$$
for all $h=0,...,N$.
\end{itemize}
\end{remark}

Since the weights considered in Proposition \ref{comparison_esy}  prove to be effective in lowering the first frequency of \eq{weight0},  by combining Proposition \ref{comparison_esy} with Theorem \ref{thm-exist-qual}, we include in the list of candidate solutions to problem \eqref{CP} the weights:
\begin{equation}\label{opt}
\overline{p}(y) = \alpha \chi_{(-\frac{\ell(\beta-1)}{\beta-\alpha},\frac{\ell(\beta-1)}{\beta-\alpha})}(y)
+ \beta \chi_{ (-\ell, \ell)\setminus (-\frac{\ell(\beta-1)}{\beta-\alpha},\frac{\ell(\beta-1)}{\beta-\alpha})}(y) \qquad y\in(-\ell, \ell)
\end{equation}
and
\begin{equation*}\label{opt2}
\overline{p}(x)=\beta \chi_{(\frac{\pi}{2} \frac{\beta-1}{\beta-\alpha},\frac{\pi}{2} \frac{\beta-2\alpha+1}{\beta-\alpha})}(x)
+\alpha \chi_{ (0,\pi)\setminus(\frac{\pi}{2} \frac{\beta-1}{\beta-\alpha},\frac{\pi}{2} \frac{\beta-2\alpha+1}{\beta-\alpha})}(x)\qquad x\in(0, \pi) \, .
\end{equation*}
\begin{figure}[h!tb]
	{\includegraphics[scale=0.2]{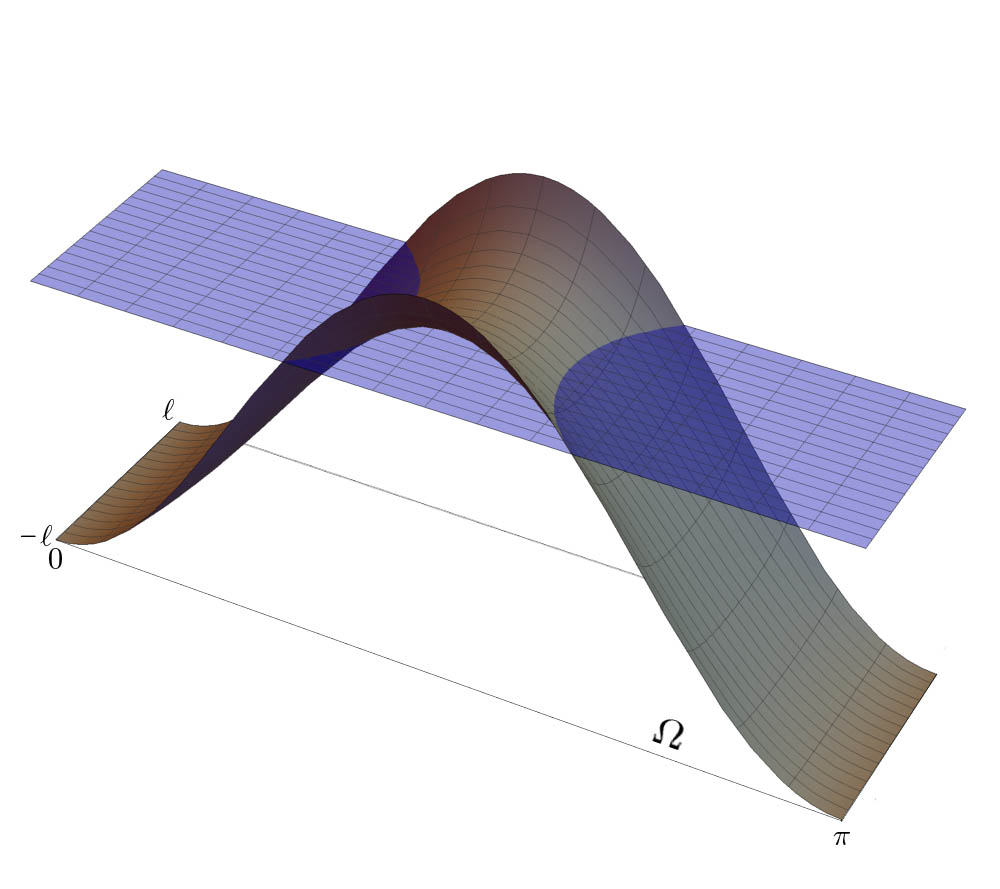}}\qquad	{\includegraphics[scale=0.7]{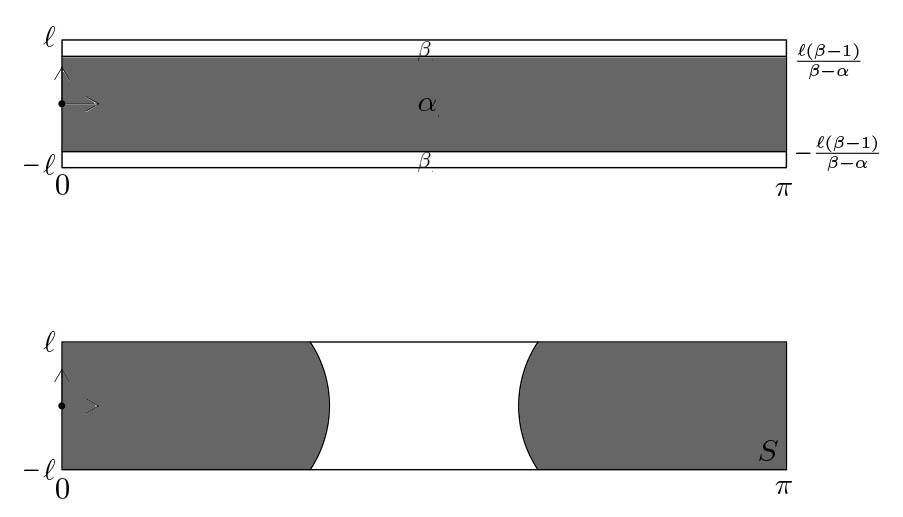}}
	\caption{On the left, plot of the eigenfunction $u_{1,\overline p}^2(x,y)$, corresponding to $\lambda_1(\overline p)$ with $\overline{p}(y)$ as in \eqref{opt}, intersected with $t>0$.
On the right, plot of $\overline{p}(y)$ (top) and plot of the sublevel set $S = \{ (x,y)\in \Omega\,:\,u_{1,\overline p}^2(x,y) \leq t \}$ (bottom).}
	\label{planar}
\end{figure}

 In Section \ref{numerics} we obtain numerically a positive eigenfunction, denoted by $u_{1,\overline p}(x,y)$, corresponding to $\lambda_1(\overline p)$ with $\overline{p}(y)$
as in \eqref{opt}. In Figure \ref{planar} on the left, we plot $z=u_{1,\overline p}^2(x,y)$ and we use it to determine qualitatively what should be the set $S$
predicted by Theorem \ref{thm-exist-qual}. In Figure \ref{planar} on the right we compare the weight $\overline p(x,y)$ in \eqref{pS} (bottom), with this choice of the set $S$, and the weight $\overline{p}(y)$ (top). From these plots we infer that $(\overline{p}(y),u_{1,\overline p}(x,y))$ is not a theoretical optimal pair of \eqref{CP}.

On the other hand, in Theorem \ref{min} below we prove that $\overline{p}(y)$ belongs to an optimal pair if we properly restrict the class of admissible weights to a suitable subset of $P_{\alpha,\beta}$.
\par

\begin{theorem}\label{min}
	Let us define
\begin{align*}
& \overline P_{\alpha,\beta} =\{p\in P_{\alpha,\beta}\,: \, p=p(y) \text{ is even, } p \text{ is piecewise continuous in } (-\ell,\ell)  \\[8pt]
& \qquad \text{ and }
\exists \, z\in (0, \ell)\,: p(y)\leq 1  \text{ in } [0,z]\,, \, p(y)\geq 1 \text{ in } [z,\ell)\} \, .
\end{align*}
When
$\beta<\min\{1/\mu_{1,1}\,,16(1-\sigma^2) \}$
the following statements hold:
\begin{itemize}
 \item[$(i)$] if $p_1, p_2\in \overline P_{\alpha,\beta}$ and there exists $z \in (0, \ell)$ such that
$$p_1(y)\leq p_2(y) \quad \text{in } [0,z]\quad \text{ and }\quad p_1(y)\geq p_2(y) \quad \text{in } [z,\ell)\,,$$
then
$$\lambda_1(p_1)\leq \lambda_1(p_2)\,;$$
  \item[$(ii)$] we have

$$\min_{p\in \overline P_{\alpha,\beta}} \lambda_1(p)=\lambda_1(\overline p)\,,$$
where $\overline p$ is as defined in \eqref{opt}.
\end{itemize}
\end{theorem}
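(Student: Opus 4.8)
The plan is to reduce problem \eqref{eigenweak} to its first transversal Fourier mode, to prove a convexity (hence monotonicity) property of the associated one-dimensional first eigenfunction in the $y$ variable, and then to compare Rayleigh quotients; once $(i)$ is established, $(ii)$ will follow by a short comparison argument.

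In a first step I would use that every $p\in\overline P_{\alpha,\beta}$ depends only on $y$: expanding $u=\sum_{m\ge1}\phi_m(y)\sin(mx)$ in \eqref{eigenweak} decouples the problem into the one-dimensional eigenvalue problems $\phi_m''''-2m^2\phi_m''+m^4\phi_m=\lambda\,p(y)\,\phi_m$ in $(-\ell,\ell)$, with the separated conditions $\phi_m''(\pm\ell)=\sigma m^2\phi_m(\pm\ell)$ and $\phi_m'''(\pm\ell)=(2-\sigma)m^2\phi_m'(\pm\ell)$, the full spectrum being the union over $m$ of theirs. Writing $\lambda_1^{(m)}(p)$ for the $m$-th first eigenvalue, monotonicity of eigenvalues in the weight and rescaling to the constant weight $\beta$ give $\lambda_1^{(m)}(p)\ge\mu_{m,1}/\beta>(1-\sigma^2)m^4/\beta\ge16(1-\sigma^2)/\beta>1$ for $m\ge2$ (this is where $\beta<16(1-\sigma^2)$ enters; the $\mu_{m,1}$ are as in Proposition \ref{eigenvalue}), whereas Proposition \ref{comparison_esy}$(i)$ gives $\lambda_1(p)\le\mu_{1,1}<1$. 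Hence $\lambda_1(p)=\lambda_1^{(1)}(p)\le\mu_{1,1}$, it is simple, and the corresponding eigenfunction is $u_p=\phi_p(y)\sin x$ with $\phi_p$ even; combining the restricted positivity preserving property of Theorem \ref{corPPP} with a Krein--Rutman argument, $\phi_p$ may be taken nonnegative (in fact positive) on $[-\ell,\ell]$.

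The heart of the matter, and the step I expect to be the main obstacle, is showing that $\phi_p$ is convex, so that $(\phi_p^2)'\ge0$ on $[0,\ell]$. Setting $c(y):=\lambda_1(p)\,p(y)$, the bounds $\lambda_1(p)\le\mu_{1,1}$, $p\le\beta$ and the hypothesis $\beta<1/\mu_{1,1}$ force $0<c(y)<1$ a.e., and this is precisely where that quantitative restriction on $\beta$ is indispensable. I would then exploit the factorisation of the $m=1$ operator as $\big(\tfrac{d^2}{dy^2}-1\big)^2$: with $\zeta:=-\phi_p''$ the equation becomes $-\zeta''+2\zeta=\phi_p''''-2\phi_p''=(c-1)\phi_p\le0$ in $(-\ell,\ell)$, while the boundary condition $\phi_p''(\pm\ell)=\sigma\phi_p(\pm\ell)$ and $\phi_p\ge0$ give $\zeta(\pm\ell)=-\sigma\phi_p(\pm\ell)\le0$; the maximum principle for $-\frac{d^2}{dy^2}+2$ then forces $\zeta\le0$, i.e.\ $\phi_p''\ge0$ on $[-\ell,\ell]$. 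Since $\phi_p$ is even with $\phi_p'(0)=0$, integration gives $\phi_p'(y)=\int_0^y\phi_p''(s)\,ds\ge0$, hence $(\phi_p^2)'\ge0$, for $y\in[0,\ell]$. The delicate points are to identify the right auxiliary quantity $\zeta$ and to verify that the maximum principle is not spoiled by the mere $L^\infty$ regularity of $p$; the second-order reduction is what makes the argument go through, since $p$ is never differentiated.

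To prove $(i)$, I would test the characterisation \eqref{lambdaP} of $\lambda_1(p_1)$ with $u_{p_2}=\phi_{p_2}(y)\sin x$, and using the previous steps obtain
\[
\lambda_1(p_1)\le\frac{\|u_{p_2}\|_{H^2_*}^2}{\ds\frac\pi2\il p_1\phi_{p_2}^2\,dy}=\lambda_1(p_2)\,\frac{\ds\il p_2\phi_{p_2}^2\,dy}{\ds\il p_1\phi_{p_2}^2\,dy},
\]
so that it remains to prove $\il(p_1-p_2)\phi_{p_2}^2\,dy\ge0$. Now $q:=p_1-p_2$ is even, $\il q\,dy=0$ (each $p_i$ has integral $2\ell$ over $(-\ell,\ell)$), $q\le0$ on $[0,z]$ and $q\ge0$ on $[z,\ell)$; hence its primitive $Q(y):=\int_0^y q$ vanishes at $0$ and at $\ell$ and satisfies $Q\le0$ on $[0,\ell]$, so that an integration by parts together with the convexity step yields $\int_0^\ell q\,\phi_{p_2}^2\,dy=-\int_0^\ell Q\,(\phi_{p_2}^2)'\,dy\ge0$, and one concludes by evenness. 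Finally, for $(ii)$ I would check that $\overline p$ from \eqref{opt} belongs to $\overline P_{\alpha,\beta}$ with crossing point $z_0=\ell\,\tfrac{\beta-1}{\beta-\alpha}\in(0,\ell)$, and note that any $p\in\overline P_{\alpha,\beta}$ satisfies $\overline p=\alpha\le p$ on $[0,z_0]$ and $\overline p=\beta\ge p$ on $[z_0,\ell)$; then $(i)$ applied with $p_1=\overline p$, $p_2=p$ and $z=z_0$ gives $\lambda_1(\overline p)\le\lambda_1(p)$, i.e.\ the minimum over $\overline P_{\alpha,\beta}$ is attained at $\overline p$.
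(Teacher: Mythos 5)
Your proposal is correct and follows the same overall architecture as the paper: reduction to the $m=1$ Fourier mode (your rescaling bound $\lambda_1^{(m)}(p)\ge \mu_{m,1}/\beta>1$ for $m\ge 2$ is essentially the paper's Lemma~\ref{comp1}), positivity and evenness of the one-dimensional first eigenfunction via the positivity preserving property of Section~\ref{pppproof}, monotonicity of that eigenfunction on $(0,\ell)$, and then the Rayleigh-quotient comparison at the crossing point, with $(ii)$ obtained by applying $(i)$ to $p_1=\overline p$, $p_2=p$, exactly as in the paper. The genuine difference is in the two auxiliary steps. For positivity the paper uses the dual-cone decomposition of \cite{book} combined with Theorem~\ref{ppp} and Corollary~\ref{pppweak} (which also yields simplicity), whereas you invoke Krein--Rutman for the strongly positive resolvent; both rest on the same positivity preserving property and are equally viable. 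More interestingly, for the key monotonicity step (the paper's Lemma~\ref{convex}) the paper works directly with the inequality $\varphi''''-2\varphi''<0$ and runs a fairly long case analysis on the signs of $\varphi''$ and $\varphi'''$ between the symmetry conditions at $0$ and the boundary conditions at $\ell$, concluding only that $\varphi'$ never vanishes and is positive; you instead set $\zeta=-\varphi''$, observe $-\zeta''+2\zeta=(\lambda_1(p)p-1)\varphi\le 0$ (this is where $\beta<1/\mu_{1,1}$ enters, as in the paper) and $\zeta(\pm\ell)=-\sigma\varphi(\pm\ell)\le 0$, and apply the second-order maximum principle to get $\varphi''\ge 0$, hence monotonicity by evenness. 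This is shorter, never differentiates $p$, and delivers the stronger conclusion of convexity; the only point to make explicit is that $\zeta\in W^{2,\infty}(-\ell,\ell)$ (since $\varphi\in C^3$ with $\varphi''''\in L^\infty$ off the finitely many jump points of $p$), so the maximum principle for strong subsolutions applies — a point you correctly flagged and which indeed causes no trouble. The final comparison in $(i)$ via the primitive $Q$ of $p_1-p_2$ and integration by parts is an equivalent reformulation of the paper's pointwise estimate at the crossing value $\varphi^2(z)$, and your verification that $\overline p\in\overline P_{\alpha,\beta}$ with $\alpha\le p\le\beta$ straddling $z_0=\ell(\beta-1)/(\beta-\alpha)$ matches the paper's conclusion of $(ii)$.
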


\begin{remark}\label{oderem} Concerning the meaning of the upper bound $\beta<\min\{1/\mu_{1,1}\,,16(1-\sigma^2) \}$ in Theorem \ref{min} a couple of remarks are in order. The proof of Theorem \ref{min} is achieved by studying a family of related 1-dimensional eigenvalue problems. Indeed, any eigenfunction of \eqref{weight} can be expanded in Fourier series as follows
$$
u(x,y)=\sum_{m=1}^{+\infty} \varphi_m(y)\sin(mx)
$$
with $\varphi_m \in C^2([-\ell,\ell])$ and, if $p=p(y)$, for every $m\geq 1$ fixed, $\varphi_m$ satisfies the weak form of the problem:
\begin{equation}\label{weight1d}
\begin{cases}
\varphi''''(y)-2m^2\varphi''(y)+m^4\varphi(y)=\lambda p(y)\varphi(y) & \qquad \text{in } (-\ell,\ell) \\
\varphi''(\pm\ell)-\sigma m^2\varphi(\pm\ell)=0 & \qquad  \\
\varphi'''(\pm\ell)-(2-\sigma)m^2\varphi'(\pm\ell)=0\,.& \qquad \,
\end{cases}
\end{equation}
See Section \ref{spectrum} for the details. In particular, if we denote by $\lambda_1(p)$ the first eigenvalue of \eqref{weight} and by $\overline\lambda_1(p,m)$ the first eigenvalue of \eq{weight1d} with $m\geq 1$ fixed, assumption $\beta\leq 16(1-\sigma^2)$ ensures that
\begin{equation}\label{monot1}
\lambda_1(p)=\min_{m\geq 1}\left\{\overline\lambda_1(p,m)\right\}=\overline\lambda_1(p,1) \, ,
\end{equation}
see Lemma \ref{comp1} below. On the other hand, the condition $\beta<1/\mu_{1,1}$ allows to prove that the first eigenfunction of $\overline\lambda_1(p,1)$ is monotone in $(0,\ell)$, and this information yields the comparison between weights of Theorem \ref{min}-$(i)$, see Lemma \ref{convex}.  The numerical results we state in Section \ref{numerics} suggest that both the upper bounds on $\beta$ are merely technical conditions.
\end{remark}

 It is worth noting that, in order to lower the first eigenvalue of $\Delta^2$ under Dirichlet or Navier boundary conditions, since the eigenfunctions vanish on the boundary,
 one expects that  the weight is more effective if it achieves its lowest value close to the boundary, see e.g. \cite[Theorem 1.5]{CV}. Theorem \ref{min} shows that the partially hinged
 boundary conditions lead to a complete different situation since the weight $\overline p(y)$ achieves its lowest value $\alpha$ far from the free long edges, see Figure \ref{planar} on the right (top).
 This behaviour is somehow related to the monotonicity of the first eigenfunction, as shown by Theorem \ref{signTH} below, cfr. Figure \ref{eig1}.

\begin{theorem}\label{signTH}
Let $\overline P_{\alpha,\beta}$ be the family of weights defined in Theorem \ref{min} with $\beta<\min\{1/\mu_{1,1}\,,16(1-\sigma^2) \}$.
Then, for any $p\in \overline P_{\alpha,\beta}$ the first eigenvalue $\lambda_1(p)$ of \eq{eigenweak} is simple. Furthermore, if $u_{1,p}$ is an eigenfunction of $\lambda_1(p)$ then
$u_{1,p}$ is of one
sign in $\Omega$ and moreover $u_{1,p}$ can be written as $u_{1,p}(x,y)=\varphi_{1,p}(y)\sin (x)$ with $\varphi_{1,p}(y)$ even and strictly monotone in $(0,\ell)$.
\end{theorem}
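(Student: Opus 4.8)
The plan is to reduce the two-dimensional problem \eqref{eigenweak} to the family of one-dimensional problems \eqref{weight1d}, and then to combine the one-dimensional positivity preserving property of Theorem \ref{corPPP} with the monotonicity information contained in Lemma \ref{convex}.

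First I would use the Fourier decomposition recalled in Remark \ref{oderem} (see also Section \ref{spectrum}): since $p=p(y)$, any eigenfunction $u$ of \eqref{eigenweak} associated with $\lambda_1(p)$ can be written as $u(x,y)=\sum_{m\ge1}\varphi_m(y)\sin(mx)$, and whenever $\varphi_m\not\equiv0$ the function $\varphi_m$ is an eigenfunction of \eqref{weight1d} with that value of $m$ and eigenvalue $\lambda_1(p)$. The bound $\beta\le16(1-\sigma^2)$, via Lemma \ref{comp1}, gives $\lambda_1(p)=\overline\lambda_1(p,1)$, while the variational characterisation of $\overline\lambda_1(p,m)$ shows that $m\mapsto\overline\lambda_1(p,m)$ is strictly increasing; hence for $m\ge2$ every eigenvalue of \eqref{weight1d} exceeds $\overline\lambda_1(p,1)=\lambda_1(p)$, so $\varphi_m\equiv0$ and $u(x,y)=\varphi_1(y)\sin x$ with $\varphi_1$ an eigenfunction of $\overline\lambda_1(p,1)$. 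This reduces the whole statement to showing that $\overline\lambda_1(p,1)$ is simple and that its eigenfunction $\varphi_{1,p}$ is positive on $(-\ell,\ell)$, even, and strictly monotone on $(0,\ell)$.

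For simplicity and sign I would introduce the compact operator $K\varphi:=L^{-1}(p\,\varphi)$, where $L\varphi=\varphi''''-2\varphi''+\varphi$ carries the boundary conditions in \eqref{weight1d} with $m=1$; $K$ is self-adjoint for the inner product $(\varphi,\psi)\mapsto\int_{-\ell}^{\ell}p\,\varphi\psi\,dy$, its largest eigenvalue equals $1/\overline\lambda_1(p,1)$, and, by Theorem \ref{corPPP}, $K$ is positivity improving, i.e.\ it maps nonnegative nonzero functions to strictly positive ones. A standard Krein--Rutman / Perron--Frobenius argument for self-adjoint positivity improving compact operators then gives that $1/\overline\lambda_1(p,1)$ is simple and that its eigenfunction $\varphi_{1,p}$, suitably normalised, is strictly positive on $(-\ell,\ell)$. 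Going back through the Fourier reduction, $\lambda_1(p)$ is simple and $u_{1,p}(x,y)=\varphi_{1,p}(y)\sin x$ is of one sign in $\Omega$. Evenness is then immediate: the reflection $y\mapsto-y$ leaves $L$, $p$ and the boundary conditions invariant, so it maps the one-dimensional $\overline\lambda_1(p,1)$-eigenspace into itself and sends the positive $\varphi_{1,p}$ to a positive multiple of itself, which by the involution property must be $\varphi_{1,p}$ itself.

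Finally, strict monotonicity of $\varphi_{1,p}$ on $(0,\ell)$ is precisely where the remaining hypothesis $\beta<1/\mu_{1,1}$ enters, and I would simply invoke Lemma \ref{convex}. I expect the real difficulty to lie outside the proof of Theorem \ref{signTH} proper, namely in the two results it rests on: Theorem \ref{corPPP}, whose proof occupies Section \ref{pppproof}, and Lemma \ref{convex}, which depends on a delicate ODE analysis exploiting $\beta<1/\mu_{1,1}$. Within the present argument the points deserving care are the claim that the strict ordering of the $\overline\lambda_1(p,m)$ really kills every higher Fourier mode (so that simplicity in dimension two descends from simplicity in dimension one), and the use of the \emph{strong}, not merely weak, form of positivity preserving, without which Krein--Rutman would yield only a nonnegative eigenfunction rather than simplicity.
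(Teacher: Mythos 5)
Your overall strategy coincides with the paper's: reduce to the one–dimensional problems \eqref{weight1d}, obtain sign and simplicity of the first one–dimensional eigenfunction from the positivity preserving property, and get the monotonicity from Lemma \ref{convex} (the paper's proof of Theorem \ref{signTH} is literally ``combine Lemma \ref{sign} and Lemma \ref{convex}''). The genuine difference is the one–dimensional sign/simplicity step: Lemma \ref{sign} uses the decomposition with respect to dual cones in $\bigl(H^2(-\ell,\ell),\langle\cdot,\cdot\rangle_m\bigr)$ combined with Corollary \ref{pppweak}, whereas you run a Krein--Rutman argument for the compact self-adjoint operator $K=L^{-1}(p\,\cdot)$, whose positivity improving character comes from Theorem \ref{ppp} (this, rather than its two–dimensional corollary Theorem \ref{corPPP}, is the statement you actually need). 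Both routes are legitimate and both rest entirely on the positivity preserving property; yours requires justifying that $|\varphi|$ is again a maximizer of the Rayleigh quotient of $K$ (positivity of the Green kernel), while the paper's dual–cone argument avoids kernels altogether.

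The one step that, as written, would fail is the claim that ``the variational characterisation of $\overline\lambda_1(p,m)$ shows that $m\mapsto\overline\lambda_1(p,m)$ is strictly increasing''. This is not known: because of the negative cross term $-\sigma m^2(\varphi''\phi+\varphi\phi'')$ in \eqref{mprodotto} the norms are not monotone in $m$ in any obvious sense, and the paper explicitly states in Section \ref{spectrum} (see also Remark \ref{oderem} and Section \ref{numerics}) that this monotonicity is only a conjecture supported by numerics. What you actually need — that for $m\geq 2$ every eigenvalue of \eqref{weight1d} strictly exceeds $\overline\lambda_1(p,1)$ — is precisely what the hypothesis $\beta\leq 16(1-\sigma^2)$ buys through the proof of Lemma \ref{comp1}: from $p\leq\beta\leq(1-\sigma^2)m^4$ one gets $\overline\lambda_1(p,m)\geq \mu_{m,1}/\bigl((1-\sigma^2)m^4\bigr)>1$ for $m\geq 2$, while $\overline\lambda_1(p,1)\leq\mu_{1,1}<1$; note that \eqref{monot} as displayed is non-strict, so you should quote this quantitative bound to kill the higher Fourier modes and conclude that every eigenfunction of $\lambda_1(p)$ has the form $\varphi_{1,p}(y)\sin x$, so that the two–dimensional simplicity descends from the one–dimensional one. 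With that substitution, the remaining steps (evenness via simplicity and positivity under $y\mapsto -y$, strict monotonicity from Lemma \ref{convex} using $\beta<1/\mu_{1,1}$) are correct and agree with the paper.
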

\begin{figure}[h!tb]
{\includegraphics[scale=0.3]{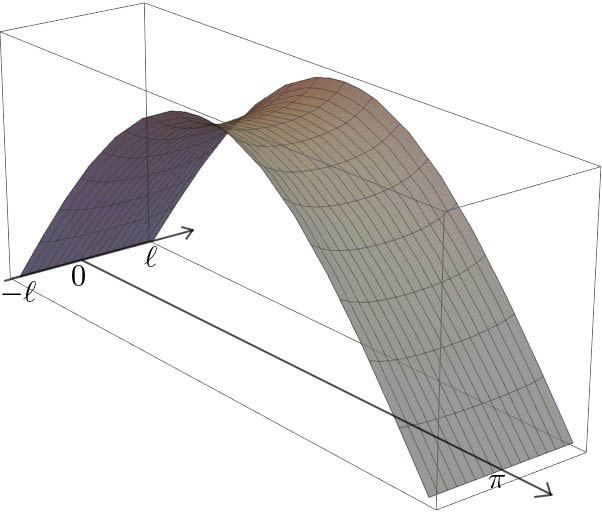}}
\caption{Qualitative plot of $u_{1,p}(x,y)=\varphi_{1,p}(y)\sin (x)$.}
\label{eig1}
\end{figure}

\par
Unfortunately, the above statement does not carry over to all weights $p\in P_{\alpha,\beta}$. This is related to the well-know loss of  comparison principles
for fourth order elliptic operators. Indeed, the proof of Theorem \ref{signTH} strongly relies on a sort of restricted positivity preserving property with respect
to the $y$ variable that we prove by separating variables. More precisely, we have

\begin{theorem}\label{corPPP}
 Let $m\geq 1$ be an integer. Furthermore, let $u\in H^2_*(\Omega)$ be the weak solution to the problem
 \begin{equation*}\label{linear}
\begin{cases}
\Delta^2 u= f(y)\, \sin(mx) & \qquad \text{in } \Omega\, \\
u(0,y)=u_{xx}(0,y)=u(\pi,y)=u_{xx}(\pi,y)=0 & \qquad \text{for } y\in (-\ell,\ell)\,\\
u_{yy}(x,\pm\ell)+\sigma
u_{xx}(x,\pm\ell)=u_{yyy}(x,\pm\ell)+(2-\sigma)u_{xxy}(x,\pm\ell)=0
& \qquad \text{for } x\in (0,\pi)\, ,
\end{cases}
\end{equation*}
namely
$$
(u,v)_{H^2_*} =\int_{\Omega} f(y)\sin(mx)\, v \qquad\forall v\in H^2_*(\Omega)\,.
$$
Then, $u(x,y)=w_m(y)\sin(mx)$ for some $w_m\in H^2(-\ell, \ell)$ and the following implication holds
 $$f\geq 0\text{ in } (-\ell, \ell)\,(f\not \equiv 0)\quad \Rightarrow \quad w_m(y)>0 \text{ in } [-\ell, \ell]\,.$$
\end{theorem}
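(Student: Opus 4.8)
The plan is to peel off the $x$-variable by a Fourier decomposition, which reduces everything to a one-dimensional fourth order ODE for a profile $w_m$, and then to prove $w_m>0$ by recasting that ODE as a cooperative system of two Helmholtz-type second order equations linked only through the boundary, and chasing signs with maximum principles; the assumption $0<\sigma<\frac12$ is used in an essential way.

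\emph{Step 1 (reduction to one dimension).} First I would set $w_k(y):=\frac2\pi\int_0^\pi u(x,y)\sin(kx)\,dx\in H^2(-\ell,\ell)$ for $k\ge1$. Testing the weak equation against $\psi(y)\sin(kx)\in H^2_*(\Omega)$ (for arbitrary $\psi\in H^2(-\ell,\ell)$) and integrating by parts in $x$, using $u(0,\cdot)=u(\pi,\cdot)=0$, one checks that $(u,\psi\sin(kx))_{H^2_*}$ depends on $u$ only through $w_k$ and equals $(w_k\sin(kx),\psi\sin(kx))_{H^2_*}$. Since the right-hand side of the weak equation is $\frac\pi2\int_{-\ell}^\ell f\psi$ if $k=m$ and $0$ otherwise, taking $\psi=w_k$ forces $w_k\equiv0$ for $k\neq m$, while $w_m$ is seen to solve the weak -- hence, by elliptic regularity ($w_m\in H^4(-\ell,\ell)$, and $w_m\in C^4([-\ell,\ell])$ when $f$ is continuous) classical -- form of the analogue of \eqref{weight1d} with the right-hand side $f(y)$ in place of $\lambda p(y)\varphi(y)$. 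Expanding $u$ in the $L^2(0,\pi)$-basis $\{\sin(kx)\}$ then gives $u(x,y)=w_m(y)\sin(mx)$.

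\emph{Step 2 (the cooperative system).} As the operator factorises, $w_m''''-2m^2w_m''+m^4w_m=\big(\frac{d^2}{dy^2}-m^2\big)^2w_m$, I would set $v:=-w_m''+m^2w_m$. Then $-v''+m^2v=f$ in $(-\ell,\ell)$, and a direct computation rewrites the two boundary conditions on $w_m$ as the coupled Robin conditions
\[
v(\pm\ell)=(1-\sigma)m^2\,w_m(\pm\ell),\qquad v'(\pm\ell)=-(1-\sigma)m^2\,w_m'(\pm\ell).
\]
Thus we have a pair of Helmholtz-type equations coupled only through the boundary, with coupling constant $c:=(1-\sigma)m^2$; since $0<\sigma<\frac12$ we have both $c>0$ -- so $v$ and $w_m$ agree in sign at each endpoint -- and $c<m^2$, and both facts will be used.

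\emph{Step 3 (the key estimate $v\ge0$ -- the main obstacle).} Because the coupling is only through the boundary, one cannot simply iterate two scalar comparison principles (as one does for $\Delta^2$ under Navier conditions), so I would argue by contradiction. Suppose $\min_{[-\ell,\ell]}v<0$. From $m^2v=f+v''\ge v''$, a negative global minimum of $v$ cannot be interior (there $v''\ge0$, hence $v\ge0$), so it is attained at $y=\pm\ell$; by the symmetry $y\mapsto-y$ assume at $y=\ell$, so $v(\ell)=\min v<0$ and in particular $v(\ell)\le v(-\ell)$. Then $w_m(\ell)=v(\ell)/c<0$; Hopf's lemma (the minimiser is non-constant, since $f\ge0$ rules out $v\equiv v(\ell)<0$) gives $v'(\ell)<0$, whence $w_m'(\ell)=-v'(\ell)/c>0$. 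Hence $w_m(y)<w_m(\ell)<0$ for $y$ slightly less than $\ell$, so $\min w_m<w_m(\ell)$ is attained at some $y^\ast\in[-\ell,\ell)$. If $y^\ast$ is interior, then $w_m'(y^\ast)=0$, $w_m''(y^\ast)\ge0$, so
\[
v(y^\ast)\le m^2w_m(y^\ast)=m^2\min w_m<m^2w_m(\ell)<c\,w_m(\ell)=v(\ell)=\min v,
\]
using $\min w_m<w_m(\ell)<0$ and $0<c<m^2$ -- impossible. If $y^\ast=-\ell$, then $\min w_m=w_m(-\ell)<w_m(\ell)$ gives $v(-\ell)=c\,w_m(-\ell)<c\,w_m(\ell)=v(\ell)=\min v$ -- again impossible. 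Therefore $v\ge0$ on $[-\ell,\ell]$.

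\emph{Step 4 (conclusion).} From $v\ge0$, the boundary relation gives $w_m(\pm\ell)=v(\pm\ell)/c\ge0$, and since $-w_m''+m^2w_m=v\ge0$ in $(-\ell,\ell)$ the maximum principle for $-\frac{d^2}{dy^2}+m^2$ yields $w_m\ge0$ on $[-\ell,\ell]$. For strict positivity, $v\not\equiv0$ (otherwise $f\equiv0$), so the strong maximum principle gives $v>0$ in $(-\ell,\ell)$ and hence $w_m>0$ in $(-\ell,\ell)$; and if one had $v(\ell)=0$, Hopf's lemma would give $v'(\ell)<0$, so $w_m'(\ell)=-v'(\ell)/c>0$ while $w_m(\ell)=0$, forcing $w_m<0$ just inside $\ell$ and contradicting $w_m\ge0$ -- thus $v(\pm\ell)>0$ and $w_m(\pm\ell)>0$. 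This gives $w_m>0$ on $[-\ell,\ell]$, as claimed. (Should $f$ be only $L^2$, I would carry out the above for continuous $f\ge0$ first and pass to the limit, recovering the strict inequality from the strong maximum principle.)
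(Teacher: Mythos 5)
Your proof is correct, but it takes a genuinely different route from the paper's. The paper reduces to the one--dimensional problem exactly as in your Step 1 and then invokes its Theorem \ref{ppp}, which is proved by an explicit representation formula: the solution of \eqref{linODE} is written as $w=w_p+\widetilde w$, where $w_p=q_m*\overline f$ is a convolution with the nonnegative kernel $q_m(y)=\tfrac{(1+m|y|)e^{-m|y|}}{4m^3}$ and $\widetilde w=c_1\cosh(my)+c_2\sinh(my)+c_3y\cosh(my)+c_4y\sinh(my)$; the coefficients $c_i$ are computed from the boundary conditions and their signs ($c_1>0$, $c_4<0$, $|c_3|<|c_4|$) are established by lengthy explicit estimates, after which $\widetilde w$ is shown to have exactly two zeros on $\R$ and to be positive at $\pm\ell$, hence on $[-\ell,\ell]$. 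You instead factor $L_m=\bigl(-\tfrac{d^2}{dy^2}+m^2\bigr)^2$, set $v=-w_m''+m^2w_m$, and observe (correctly -- the computation $v(\pm\ell)=(1-\sigma)m^2w_m(\pm\ell)$, $v'(\pm\ell)=-(1-\sigma)m^2w_m'(\pm\ell)$ checks out) that the free--edge conditions become a boundary coupling with constant $c=(1-\sigma)m^2$ satisfying $0<c<m^2$; the sign of $w_m$ then follows from a short contradiction argument with the maximum principle and Hopf's lemma. Your argument is shorter and more conceptual: it makes transparent exactly where $\sigma\in(0,1)$ enters (indeed it works for all $\sigma\in(0,1)$, not only $\sigma<1/2$), and it avoids all explicit kernel and coefficient computations; the price is the mild regularity care you flag (pointwise use of $v''$ and Hopf when $f$ is only $L^2$), which in one dimension is routine -- either via your approximation step or directly, since where $v<0$ one has $v''=m^2v-f<0$ a.e., so $v$ is strictly concave there and the interior-minimum and boundary-derivative arguments go through for $H^2$ solutions. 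The paper's computational proof, by contrast, yields extra explicit information (the representation of $w$ and the precise zero structure of $\widetilde w$) that your argument does not provide but which the theorem does not require.
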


\section{Numerical Results} \label{numerics}
In this section, for any $m\geq 1$, we compute numerically the first eigenvalue $\overline{\lambda}_1(p,m)$ of problem \eqref{weight1d} with $p$ as defined in \eqref {opt}. More precisely, we take
\begin{equation*}\label{load}
\overline p_{\alpha, \beta}(y)=
\begin{cases}
\beta\qquad y\in (-\ell,-\overline{y})\cup (\overline{y},\ell)\\
\alpha \qquad y\in (-\overline{y},\overline{y})
\end{cases}
\end{equation*}
with $0<\alpha<1<\beta$ and $\overline{y}=\frac{\ell(\beta-1)}{\beta-\alpha}$, so that $\int_{0}^{\ell}pdy=\ell$. In terms of engineering applications, this means that we are dealing with a weight given by the pairing of two materials having different densities $\alpha$
and $\beta$, properly placed on rectangular strips, having the length of the whole plate. Furthermore, we assume that the deck of the bridge is composed by a mixture of concrete and steel, hence the Poisson ratio is variable between 0.15 and 0.3; for this reason in what follows we take $\sigma=0.2$.
 \par
Note that, since $\overline p_{\alpha, \beta}(y)$ is an even function, to determine all eigenvalues of \eqref{weight1d}, we may focus on even and odd eigenfunctions.
Indeed, if $\varphi(y)$ is an eigenfunction which is neither odd or even, it is readily verified that also $\varphi^{ev}(y):=\frac{\varphi(y)+\varphi(-y)}{2}$
and $\varphi^{od}(y):=\frac{\varphi(y)-\varphi(-y)}{2}$ are eigenfunctions, respectively even and odd, corresponding to the same eigenvalue of $\varphi(y)$.
On the other hand, since the first eigenvalue of \eqref{weight1d} is simple and the corresponding eigenfunctions are of one sign in $[-\ell,\ell]$, see Remark \ref{oderem} and Theorem \ref{signTH},
we infer that $\varphi$ must be an even function, whence to compute $\overline{\lambda}_1(p,m)$ we may concentrate on even eigenfunctions that we named $\varphi^{ev}$.
For any $m\geq 1$ we have that
\begin{equation}\label{phieven}
\varphi^{ev}(y)=\begin{cases}
h_1(-y)\qquad&{\rm on}\hspace{2mm}[-\ell,-\overline{y}]\\
h_2(y)\qquad&{\rm on}\hspace{2mm}(-\overline{y},\overline{y})\\
h_1(y)\qquad&{\rm on}\hspace{2mm}[\overline{y},\ell]\\
\end{cases}\end{equation}
where $h_1$ and $h_2$ satisfy:
\begin{equation}\label{num1}
\begin{cases}
h_1''''(y)-2m^2h_1''(y)+m^4h_1(y)=\lambda \beta h_1(y)\qquad {\rm on}\hspace{2mm}(\overline{y},\ell)\\
h_2''''(y)-2m^2h_2''(y)+m^4h_2(y)=\lambda \alpha h_2(y)\qquad {\rm on}\hspace{2mm}[0,\overline{y})\vspace{2mm}\\
h_1''(\ell)-\sigma m^2h_1(\ell)=0, \qquad
h_1'''(\ell)-(2-\sigma)m^2h_1'(\ell)=0,\\
h_2'(0)=0, \hspace{28mm}
h_2'''(0)=0,
\\ h_1(\overline{y})=h_2(\overline{y}),\hspace{21mm} h'_1(\overline{y})=h'_2(\overline{y}),\\h''_1(\overline{y})=h''_2(\overline{y}),\hspace{21mm} h'''_1(\overline{y})=h'''_2(\overline{y})\,.
\end{cases}
\end{equation}
Note that the compatibility conditions between the functions $h_1$ and $h_2$, ensure that $\varphi^{ev} \in C^3([-\ell,\ell])$, while $h_2'(0)=h_2'''(0)=0$ come from $\varphi^{ev}(-y)=\varphi^{ev}(y)$ and its regularity.
Clearly, the analytical expression of $h_1(y)$ and $h_2(y)$ depends on the roots of the characteristic polynomials related to the first two equations in \eqref{num1};
we denote them  respectively by $\zeta_1$ and $\zeta_2$ and we find that they satisfy
\begin{equation*}
\zeta_1^2=m^2\pm\sqrt{\lambda\beta}\qquad\zeta_2^2=m^2\pm\sqrt{\lambda\alpha}.
\end{equation*}
Therefore, the sign of $m^2-\sqrt{\lambda\beta}$ and $m^2-\sqrt{\lambda \alpha}$ determines different kinds of solutions. We introduce the following notations
$$
\eta_\alpha:=\sqrt{m^2+\sqrt{\lambda \alpha}},\qquad\eta_\beta:=\sqrt{m^2+\sqrt{\lambda \beta}},\qquad\omega_\alpha
:=\sqrt{|m^2-\sqrt{\lambda \alpha}|},\qquad\omega_\beta:=\sqrt{|m^2-\sqrt{\lambda \beta}|} \, ,
$$
and we distinguish five cases:
\begin{itemize}
	\item[\textbf{a)}] $m^4>\lambda\beta>\lambda\alpha$, implying $\lambda<m^4/\beta$ and \\
	\begin{equation*}
	\begin{split}
	&h_1(y)=a_1\cosh\big(\eta_\beta y\big)+b_1\sinh\big(\eta_\beta y\big)+c_1\cosh\big(\omega_\beta y\big)+d_1\sinh\big(\omega_\beta y\big)\,,\\
	&h_2(y)=a_2\cosh\big(\eta_\alpha y\big)+c_2\cosh\big(\omega_\alpha y\big)\,,
	\end{split}
	\end{equation*}
	\item[\textbf{b)}] $m^4=\lambda\beta$, so that $\eta_\alpha=m\sqrt{1+\sqrt{\alpha/\beta}}$, $\omega_\alpha=m\sqrt{1-\sqrt{\alpha/\beta}}$ and\\
	\begin{equation*}
	\begin{split}
	&h_1(y)=a_1\cosh\big(\sqrt{2}m y\big)+b_1\sinh\big(\sqrt{2}m y\big)+c_1y+d_1\,,\\
	&h_2(y)=a_2\cosh\big(\eta_\alpha y\big)+c_2\cosh\big(\omega_\alpha y\big)\,,
	\end{split}
	\end{equation*}
	\item[\textbf{c)}] $\lambda\alpha<m^4<\lambda\beta$, implying $m^4/\beta<\lambda<m^4/\alpha$ and \\
	\begin{equation*}
	\begin{split}
	&h_1(y)=a_1\cosh\big(\eta_\beta y\big)+b_1\sinh\big(\eta_\beta y\big)+c_1\cos\big(\omega_\beta y\big)+d_1\sin\big(\omega_\beta y\big)\,,\\
	&h_2(y)=a_2\cosh\big(\eta_\alpha y\big)+c_2\cosh\big(\omega_\alpha y\big)\,,\end{split}
	\end{equation*}
	\item[\textbf{d)}] $m^4=\lambda\alpha$, so that $\eta_\beta=m\sqrt{1+\sqrt{\beta/\alpha}}$, $\omega_\beta=m\sqrt{\sqrt{\beta/\alpha}-1}$ and \\
	\begin{equation*}
	\begin{split}
	&h_1(y)=a_1\cosh\big(\eta_\beta y\big)+b_1\sinh\big(\eta_\beta y\big)+c_1\cos\big(\omega_\beta y\big)+d_1\sin\big(\omega_\beta y\big)\,,\\
	&h_2(y)=a_2\cosh\big(\sqrt{2}m y\big)+c_2\,,
	\end{split}
	\end{equation*}
	\item[\textbf{e)}] $m^4<\lambda\alpha<\lambda\beta$, implying $\lambda>m^4/\alpha$ and \\
	\begin{equation*}
	\begin{split}
	&h_1(y)=a_1\cosh\big(\eta_\beta y\big)+b_1\sinh\big(\eta_\beta y\big)+c_1\cos\big(\omega_\beta y\big)+d_1\sin\big(\omega_\beta y\big)\,,\\
	&h_2(y)=a_2\cosh\big(\eta_\alpha y\big)+c_2\cos\big(\omega_\alpha y\big)\,.
	\end{split}
	\end{equation*}
\end{itemize}
The six coefficients involved in the definition of $h_1$ and $h_2$ can be determined, in each of the five cases, by imposing the boundary and compatibility conditions. We present here only case $\textbf{c)}$, since the others cases can be treated similarly.\par
First of all we assume that $h_1$ satisfies the boundary conditions, i.e.
\begin{equation*}
(BCs)
\begin{cases}
h_1''(\ell)-\sigma m^2h_1(\ell)=0\\
h_1'''(\ell)-(2-\sigma)m^2h_1'(\ell)=0
\end{cases}
\qquad\Rightarrow\qquad
\begin{cases}
(\eta_\beta^2-\sigma m^2)[a_1\cosh(\eta_\beta\ell)+b_1\sinh(\eta_\beta\ell)]+\\-(\omega_\beta^2+\sigma m^2)[c_1\cos(\omega_\beta\ell)+d_1\sin(\eta_\beta\ell)]=0\\
(\eta_\beta^2+(\sigma-2) m^2)\eta_\beta[a_1\sinh(\eta_\beta\ell)+b_1\cosh(\eta_\beta\ell)]+\\
(\omega_\beta^2-(\sigma-2) m^2)\omega_\beta[c_1\sin(\omega_\beta\ell)-d_1\cos(\omega_\beta\ell)]=0,
\end{cases}
\end{equation*}
then we impose the compatibility conditions, i.e.
\small
\begin{equation*}
\begin{split}
i)\\ii)\\iii)\\iv)
\end{split}
\begin{cases}
h_1(\overline{y})=h_2(\overline{y})\\ h'_1(\overline{y})=h'_2(\overline{y})\\h''_1(\overline{y})=h''_2(\overline{y})\\ h'''_1(\overline{y})=h'''_2(\overline{y})
\end{cases}
\qquad\Rightarrow\qquad
\begin{cases}
a_1\cosh\big(\eta_\beta \overline{y}\big)+b_1\sinh\big(\eta_\beta \overline{y}\big)+c_1\cos\big(\omega_\beta \overline{y}\big)
+d_1\sin\big(\omega_\beta \overline{y}\big)+\\-a_2\cosh\big(\eta_\alpha \overline{y}\big)-c_2\cosh\big(\omega_\alpha \overline{y}\big)=0
\\
a_1\eta_\beta\sinh\big(\eta_\beta \overline{y}\big)+b_1\eta_\beta\cosh\big(\eta_\beta \overline{y}\big)
-c_1\omega_\beta\sin\big(\omega_\beta \overline{y}\big)+d_1\omega_\beta\cos\big(\omega_\beta \overline{y}\big)+\\
-a_2\eta_\alpha\sinh\big(\eta_\alpha \overline{y}\big)-c_2\omega_\alpha\sinh\big(\omega_\alpha \overline{y}\big)=0
\\
a_1\eta_\beta^2\cosh\big(\eta_\beta \overline{y}\big)+b_1\eta_\beta^2\sinh\big(\eta_\beta \overline{y}\big)
-c_1\omega_\beta^2\cos\big(\omega_\beta \overline{y}\big)-d_1\omega_\beta^2\sin\big(\omega_\beta \overline{y}\big)+\\
-a_2\eta_\alpha^2\cosh\big(\eta_\alpha \overline{y}\big)-c_2\omega_\alpha^2\cosh\big(\omega_\alpha \overline{y}\big)=0\\
a_1\eta_\beta^3\sinh\big(\eta_\beta \overline{y}\big)+b_1\eta_\beta^3\cosh\big(\eta_\beta \overline{y}\big)
+c_1\omega_\beta^3\sin\big(\omega_\beta \overline{y}\big)-d_1\omega_\beta^3\cos\big(\omega_\beta \overline{y}\big)+
\\-a_2\eta_\alpha^3\sinh\big(\eta_\alpha \overline{y}\big)-c_2\omega_\alpha^3\sinh\big(\omega_\alpha \overline{y}\big)=0.
\end{cases}
\end{equation*}
\normalsize
We should solve a system of six equations and six unknowns; through some algebraic manipulations, we reduce it to a system of four equations and four unknowns $\textbf{v}=(a_1,b_1,c_1,d_1)^T$. More precisely, we get
{\small \begin{equation}\label{num2}
	\begin{cases}
	(BCs)\\
	[\eta_\alpha^2 (h_1(\overline{y})-h_2(\overline{y}))-(h''_1(\overline{y})-h''_2(\overline{y}))]\omega_\alpha\sinh(\omega_\alpha\overline{y})=[\eta_\alpha^2(h'_1(\overline{y})-h'_2(\overline{y}))-(h'''_1(\overline{y})-h'''_2(\overline{y}))]\cosh(\omega_\alpha\overline{y})\\
	[\omega_\alpha^2 (h_1(\overline{y})-h_2(\overline{y}))-(h''_1(\overline{y})-h''_2(\overline{y}))]\eta_\alpha\sinh(\eta_\alpha\overline{y})=[\omega_\alpha^2(h'_1(\overline{y})-h'_2(\overline{y}))-(h'''_1(\overline{y})-h'''_2(\overline{y}))]\cosh(\eta_\alpha\overline{y}).
	\end{cases}
	\end{equation}}
To system \eqref{num2} we associate a square matrix depending on the eigenvalues  $\textbf{M}(\lambda)\in \mathbb{M}_4(\mathbb{R})$, hence \eqref{num2} rewrites $\textbf{M}(\lambda) \textbf{v}=\textbf{0}$; since we are interested in not trivial solutions we end up with the equation
\begin{equation}\label{num3}
f(\lambda):=\det\textbf{M}(\lambda)=0 \quad \text{with } \lambda >0.
\end{equation}
In this way, for any $m\geq 1$ fixed, the zeroes of the function $f(\lambda)$ in the interval $m^4/\beta<\lambda<m^4/\alpha$, if they exist, are the eigenvalues corresponding to eigenfunctions $\varphi^{ev}$ as in \eqref{phieven} with $h_1$ and $h_2$ as in $\textbf{c)}$. This procedure can be applied to each of the five cases $\textbf{a)}-\textbf{e)}$. \par

The computation by hand of \eqref{num3} is very involved, thus we perform it numerically in all the five cases listed above. Our experiments reveal that cases $\textbf{b)}$ and $\textbf{d)}$ do not occur if $1\leq m\leq M$, for a suitable $M$ which, for all tested values of $\alpha$ and $\beta$, satisfies $M\approx6/\ell$. This implies large $M$ for small $\ell$, as common in plates for bridges. Therefore, we focus on cases \textbf{a)}-\textbf{c)}-\textbf{e)}.
\begin{figure}[h!tb]
	\centerline{\includegraphics[scale=0.3]{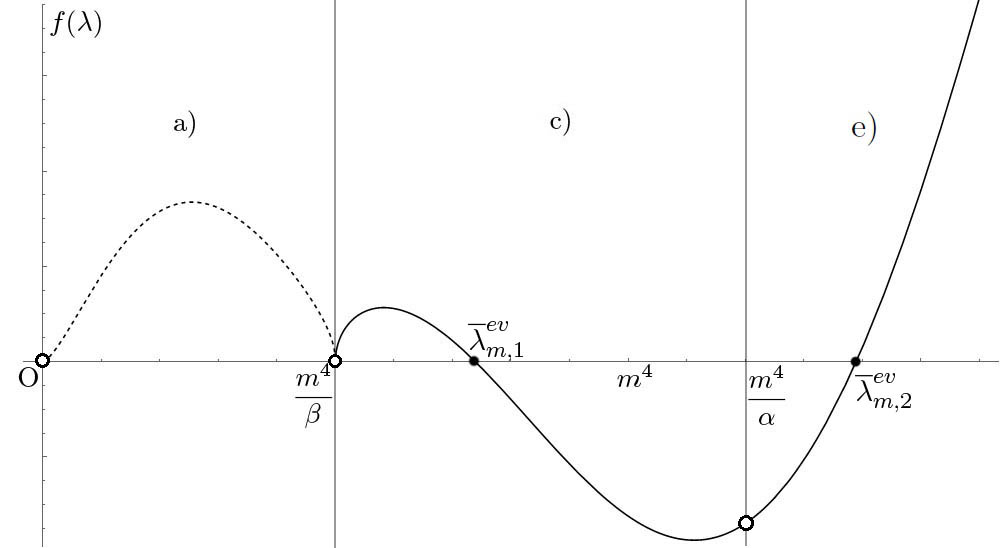}}
	\caption{Plot of $f(\lambda)$ in the cases a) (dashed), c) and e). Here $\overline{\lambda}^{ev}_{m,k}:=\overline{\lambda}_k^{ev}(\overline p_{\alpha, \beta},m)$.}
	\label{eigplot}
\end{figure}
We tested several values of $0<\alpha<1<\beta$ and $1\leq m\leq M$ always obtaining the same qualitative plot of $f(\lambda)$.  Figure \ref{eigplot} shows the following facts: we do not find eigenvalues in case $\textbf{a)}$, since $f(\lambda)>0$ for all $\lambda\in(0,m^4/\beta)$; the first eigenvalue $\overline{\lambda}^{ev}_1(\overline p_{\alpha, \beta},m)$ falls always in case $\textbf{c)}$; all the other eigenvalues corresponding to even functions fall in case $\textbf{e)}$. Furthermore, our numerical results yield the following bounds on eigenvalues corresponding to even eigenfunctions:
$$\dfrac{m^4}{\beta}<\overline{\lambda}^{ev}_1(\overline p_{\alpha, \beta},m)=\overline{\lambda}_1(\overline p_{\alpha, \beta},m)<m^4,\qquad \overline{\lambda}^{ev}_k(\overline p_{\alpha, \beta},m)>\dfrac{m^4}{\alpha}\qquad{\rm for}\hspace{2mm}k\geq2\,.$$

\par
\bigskip \par
We are now interested in checking if \eqref{monot1} holds when the upper bound on $\beta$ of Theorem \ref{min} is not satisfied (see also Lemma \ref{comp1} below), i.e. if
$$\overline{\lambda}_1^{ev}(\overline p_{\alpha, \beta},m) \geq \overline{\lambda}_1^{ev}(\overline p_{\alpha, \beta},1) \quad \text{for } m\geq 2$$
when $\beta\gg 16(1-\sigma^2)$. To this aim we study the behaviour of the maps $\beta\mapsto \overline{\lambda}^{ev}_1(\overline p_{\alpha, \beta},m)$ and $m \mapsto \overline{\lambda}^{ev}_1(\overline p_{\alpha, \beta},m)$. In Figure \ref{eigplot1} we plot some points of the map $\beta\mapsto\overline{\lambda}_1^{ev}(\overline p_{\alpha, \beta},1) $ for $\alpha=0.5$, we register a similar behaviour for $\overline{\lambda}_1^{ev}(\overline p_{\alpha, \beta},m) $ with $m\geq 2$. On the other hand, in Table \ref{num4} we put the values of $\overline{\lambda}_1^{ev}(\overline p_{\alpha, \beta},m) $ for $m=1,\dots,10$, computed taken $p\equiv1$, and for two suitable choices of $\alpha$ and $\beta$ with $\beta$ satisfying or not the smallness assumption on $\beta$ of Theorem \ref{min}.
\begin{figure}[h!tb]
	{\includegraphics[scale=0.5]{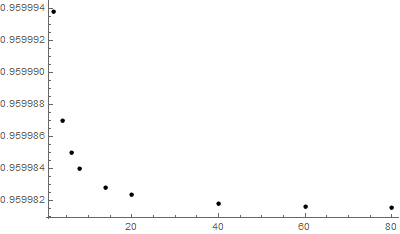}}
	\caption{Plot of $\beta\mapsto \overline{\lambda}_1^{ev}(\overline p_{\alpha, \beta},1) $ with $\ell=\frac{\pi}{150}$ ($\alpha=0.5$).}
	\label{eigplot1}
\end{figure}

\begin{table}[h]\centering
	\scalebox{0.8}{ \begin{tabular}{|c|c|c|c|c|c|c|c|c|c|c|c|c|c|}
			\hline
			 Case&$\overline{\lambda}^{ev}_{1,1}$&$\overline{\lambda}^{ev}_{2,1}$&$\overline{\lambda}^{ev}_{3,1}$&$\overline{\lambda}^{ev}_{4,1}$&$\overline{\lambda}^{ev}_{5,1}$&$\overline{\lambda}^{ev}_{6,1}$&$\overline{\lambda}^{ev}_{7,1}$&$\overline{\lambda}^{ev}_{8,1}$&$\overline{\lambda}^{ev}_{9,1}$&$\overline{\lambda}^{ev}_{10,1}$\\
			\hline
			\hline
			$p\equiv1$&\textbf{0.960009}&15.3610&77.767&245.798&600.145&1244.59&2306.05&3934.57&6303.42&9609.09\\
			\hline
			$\alpha=0.5$, $\beta=1.5$&\textbf{0.959999}&15.3599&77.759&245.755&599.982&1244.10&2304.82&3931.85&6297.92&9598.78\\
			\hline
			$\alpha=0.5$, $\beta=20$&\textbf{0.959982}&15.3589&77.747&245.688&599.724&1243.34&2302.88&3927.53&6289.17&9582.33\\
			\hline
	\end{tabular}}
	\vspace{3mm}
	\caption{The eigenvalues $\overline{\lambda}^{ev}_{m,1}:=\overline{\lambda}_1^{ev}(\overline p_{\alpha, \beta},m)$ with $m=1,\dots,10$ and $\ell=\frac{\pi}{150}$.}
	\label{num4}
\end{table}

All the numerical experiments performed suggest that
$$\text{the map }\beta \mapsto  \overline{\lambda}_1^{ev}(\overline p_{\alpha, \beta},m)  \text{ is decreasing} \quad \text{and } \quad   \overline{\lambda}_1^{ev}(\overline p_{\alpha, \beta},m)  \geq (m-1)^4 \text{ for all } \beta>1\,$$
and the trend does not change varying $\ell$ and $\alpha$. In particular, the above lower bound for $\overline{\lambda}_1^{ev}(\overline p_{\alpha, \beta},m) $ does not depend on $\beta$ and, jointly with the fact that $\overline{\lambda}_1^{ev}(\overline p_{\alpha, \beta},m)<m^4$, supports the conjecture that \
$$
\text{the map }m\mapsto \overline{\lambda}_1^{ev}(\overline p_{\alpha, \beta},m)  \text{ is increasing}
$$
for any $\beta>1$, hence the assumption  $\beta\gg 16(1-\sigma^2)$ of Theorem \ref{min}  seems a merely technical condition.
\par

\section{A positivity preserving property} \label{pppproof}


In this section we state and prove some results about a positivity preserving property for the fourth order differential operator
\begin{equation} \label{eq:Lm}
L_m \varphi=\varphi''''-2m^2\varphi''+m^4 \varphi \, , \qquad m\in \N_+ \, , \ \varphi:[-\ell,\ell]\to \R \, ,
\end{equation}
subject to the boundary conditions:
\begin{equation*}
\begin{cases}
\varphi''(\pm\ell)-\sigma m^2\varphi(\pm\ell)=0 & \qquad  \\
\varphi'''(\pm\ell)-(2-\sigma)m^2\varphi'(\pm\ell)=0\,.& \qquad \,
\end{cases}
\end{equation*}
As in Section \ref{s:main} we fix $0<\sigma<\frac{1}{2}$. These results have their own independent interest and will be exploited in the proofs of Section \ref{spectrum}.

For every $m\in \N_+$, it will be convenient to consider the following scalar product in $H^2(-\ell, \ell)$:
\begin{equation}\label{mprodotto}
\langle\varphi,\phi\rangle_m:=\int_{-\ell}^{\ell}\left( \varphi'' \phi''+2m^2(1-\sigma) \varphi' \phi'-\sigma m^2 (\varphi''\phi+\varphi\phi'')+m^4 \varphi \phi\right) \,dy \,
\end{equation}
which defines an equivalent norm in $H^2(-\ell, \ell)$ that we will denote by  $\lv \phi\rv_{m}^2  =(\phi,\phi)_m$.

\begin{theorem}\label{ppp}
	Let $m\geq 1$ be an integer and let $f\in L^2(-\ell, \ell)$. Furthermore, assume that $w\in H^2(-\ell, \ell)$ is a weak solution to the problem
	\begin{equation}\label{linODE}
	\begin{cases}
	w''''(y)-2m^2w''(y)+m^4w(y)=f (y) & \qquad y\in (-\ell,\ell) \\
	w''(\pm\ell)-\sigma m^2w(\pm\ell)=0 & \qquad  \\
	w'''(\pm\ell)-(2-\sigma)m^2w'(\pm\ell)=0\,.& \qquad \,
	\end{cases}
	\end{equation}
	namely
	\begin{equation}\label{eqppp}
	\langle w,\phi\rangle_m=\int_{-\ell}^{\ell} f \phi \quad \text{ for all } \phi\in H^2(-\ell, \ell)\,,
	\end{equation}
	where $\langle \cdot,\cdot\rangle_m$ is defined in \eqref{mprodotto}.
	Then the following implication holds
	$$f\geq 0\text{ in } (-\ell, \ell)\,(f\not \equiv 0)\quad \Rightarrow \quad w(y)>0 \text{ in } [-\ell, \ell]\,.$$
	Hence, the operator  $L_m$ defined in \eqref{eq:Lm},  under the boundary conditions in \eqref{linODE}, satisfies the positivity preserving property.
\end{theorem}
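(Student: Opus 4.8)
The plan is to reduce the positivity preserving property for the fourth-order operator $L_m$ to a product of two second-order operators, each satisfying a maximum principle, and then chain together the two resulting comparison statements. Note that $L_m\varphi = \varphi''''-2m^2\varphi''+m^4\varphi = (-\varphi''+m^2\varphi)'' + m^2(-\varphi''+m^2\varphi) = (A_m\circ A_m)\varphi$, where $A_m\varphi := -\varphi''+m^2\varphi$. The boundary conditions in \eqref{linODE} are precisely what is needed to make this factorization work at the level of the weak formulation: one checks that setting $v := A_m w = -w''+m^2w$, the equation $L_m w = f$ becomes $-v''+m^2 v = f$ together with the Robin-type conditions that come out of \eqref{eqppp} when testing against suitable $\phi$. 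More precisely, I would first establish that $w$ solves $A_m w = v$ and $A_m v = f$ in the weak (hence, by elliptic regularity since $f\in L^2$, the strong $H^2$) sense, where each problem is equipped with boundary conditions of the form $\psi'(\ell)+\gamma m\psi(\ell)=0$, $-\psi'(-\ell)+\gamma m\psi(-\ell)=0$ for an appropriate $\gamma=\gamma(\sigma)>0$. The positivity of $\gamma$, i.e. that the boundary operator has the correct (dissipative) sign, is where the hypothesis $0<\sigma<\tfrac12$ enters, and I expect this to be the first technical point to verify carefully.

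Next I would prove the scalar second-order statement: if $\psi\in H^2(-\ell,\ell)$ satisfies $-\psi''+m^2\psi = g\geq 0$ ($g\not\equiv 0$) with the above Robin conditions, then $\psi>0$ on $[-\ell,\ell]$. This is a standard one-dimensional maximum principle argument: multiply by $\psi^-$ (the negative part) and integrate by parts, using that the boundary terms have a sign because $\gamma>0$, to conclude $\psi\geq 0$; then the strong maximum principle (or an explicit ODE argument via the variation-of-constants formula / positivity of the Green's function for $-\psi''+m^2\psi$ with these boundary conditions) upgrades this to strict positivity on the closed interval, using $g\not\equiv 0$. One should also record that the Green's function $G_m(y,s)$ for this second-order Robin problem is strictly positive on $[-\ell,\ell]^2$, which can be seen either abstractly from the maximum principle or by writing it explicitly in terms of $\cosh(m\,\cdot)$ and $\sinh(m\,\cdot)$.

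Finally I would compose: from $f\geq 0$, $f\not\equiv 0$ and the second-order result applied to $A_m v = f$ we get $v>0$ on $[-\ell,\ell]$; in particular $v\geq 0$ and $v\not\equiv 0$, so applying the second-order result once more to $A_m w = v$ yields $w>0$ on $[-\ell,\ell]$, which is the claim, and this also establishes the positivity preserving property of $L_m$ under the stated boundary conditions. The main obstacle, as indicated above, is the bookkeeping in the first step: one must verify that the weak formulation \eqref{eqppp} with the scalar product $\langle\cdot,\cdot\rangle_m$ genuinely decouples into the two second-order Robin problems with boundary coefficients of the correct sign — in particular that no spurious or wrong-signed boundary terms appear — and that the regularity needed to make sense of the pointwise statements (so that $v\in H^2$ and the ODEs hold a.e.) follows from $f\in L^2$ by standard elliptic regularity. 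Once the factorization with good boundary conditions is in place, the rest is a routine two-step application of the one-dimensional maximum principle.
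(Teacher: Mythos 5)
Your reduction to two second--order Robin problems does not go through: the decoupling step, which you yourself flag as the main obstacle, in fact fails for these boundary conditions. Setting $v:=-w''+m^2w$ (so that formally $-v''+m^2v=f$) and using the boundary conditions in \eqref{linODE}, at $y=\ell$ one finds
\begin{equation*}
v(\ell)=-w''(\ell)+m^2w(\ell)=(1-\sigma)m^2\,w(\ell),\qquad
v'(\ell)=-w'''(\ell)+m^2w'(\ell)=-(1-\sigma)m^2\,w'(\ell),
\end{equation*}
and similarly at $y=-\ell$. These are \emph{coupling} relations between the boundary values of $v$ and those of $w$; since $w(\pm\ell)$ and $w'(\pm\ell)$ are not constrained by \eqref{linODE}, no condition on $v$ and $v'$ alone (nor on $w$ and $w'$ alone) can be extracted. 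Hence there is no self-contained Robin problem for $v$ to which the scalar maximum principle applies, and likewise none for $w$ in the second step. The same obstruction is visible in the weak formulation: integrating by parts in \eqref{mprodotto} gives
\begin{equation*}
\langle w,\phi\rangle_m=\int_{-\ell}^{\ell}(-w''+m^2w)(-\phi''+m^2\phi)\,dy
+(1-\sigma)m^2\bigl[w'\phi+w\phi'\bigr]_{-\ell}^{\ell},
\end{equation*}
so the form is not a pure square of the second-order operator, and the extra boundary terms are exactly what prevents iterating the second-order comparison principle. Your scheme is the standard one for Navier conditions $w=w''=0$ (where $v(\pm\ell)=0$ and the system genuinely splits into two Dirichlet problems), but the free-edge conditions here behave differently, and this is precisely why the positivity statement is nontrivial.

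The paper's proof is instead a direct computation: one writes $w=c_1\cosh(my)+c_2\sinh(my)+c_3y\cosh(my)+c_4y\sinh(my)+q_m*\overline f$, where $q_m(y)=(1+m|y|)e^{-m|y|}/(4m^3)\ge 0$ is the fundamental solution of $L_m$ on $\R$, solves explicitly for $c_1,\dots,c_4$ from the boundary conditions, and establishes the sign information $c_1>0$, $c_4<0$, $|c_3|<|c_4|$ (this is where $0<\sigma<1/2$ enters, through the inequality $2\cosh z>(1+\sigma)\sinh z$); one then shows the homogeneous part has exactly two real zeros of opposite sign and is positive at $0$ and at $\pm\ell$, which forces $w>0$ on $[-\ell,\ell]$. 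If you wish to avoid such computations you would need positivity of the Green function of the full coupled fourth-order problem, which is essentially the content of the theorem itself, so some version of this explicit sign analysis seems unavoidable.
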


As a consequence of Theorem \ref{ppp} we have

\begin{corollary}\label{pppweak}
	Let $m\geq 1$, be an integer. Furthermore, set $\mathcal{K}:=\{\phi\in H^2(-\ell, \ell): \phi \geq 0 \text{ in }(-\ell, \ell)\}$ and assume that $w\in H^2(-\ell, \ell)$ satisfies
	\begin{equation}\label{ineqweak}
	\langle w,\phi\rangle_m \leq 0 \quad \text{ for all } \phi\in\mathcal{K} \,,
	\end{equation}
	where $\langle \cdot,\cdot\rangle_m$ is defined in \eqref{mprodotto}.
	Then
	$$\text{either}\quad w \equiv 0 \quad  \text{ or } \quad w<0 \text{ in  } (-\ell, \ell)\,.$$
	
\end{corollary}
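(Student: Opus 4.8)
The plan is to first deduce $w\le 0$ from the positivity preserving property, and then, assuming $w\not\equiv 0$, to upgrade this to strict negativity in $(-\ell,\ell)$ by a second-order maximum principle after the substitution $q:=m^2v-v''$ with $v:=-w$. Throughout I would use the integration-by-parts identity
$$\langle v,\phi\rangle_m=\int_{-\ell}^{\ell}(L_m v)\,\phi\,dy+\Big[(v''-\sigma m^2 v)\phi'-(v'''-(2-\sigma)m^2v')\phi\Big]_{-\ell}^{\ell}$$
(the first term read distributionally when $v$ is not regular enough) and the embedding $H^2(-\ell,\ell)\hookrightarrow C^1([-\ell,\ell])$.

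\emph{Step 1 ($w\le 0$).} For $g\in L^2(-\ell,\ell)$ with $g\ge 0$, $g\not\equiv 0$, let $\phi_g\in H^2(-\ell,\ell)$ be the weak solution of \eqref{linODE} with $f=g$, so $\langle\phi_g,\psi\rangle_m=\int_{-\ell}^{\ell}g\,\psi$ for all $\psi\in H^2(-\ell,\ell)$. By Theorem \ref{ppp}, $\phi_g>0$ in $[-\ell,\ell]$, hence $\phi_g\in\mathcal K$; choosing $\phi=\phi_g$ in \eqref{ineqweak} and using the symmetry of $\langle\cdot,\cdot\rangle_m$ with $\psi=w$,
$$0\ge\langle w,\phi_g\rangle_m=\langle\phi_g,w\rangle_m=\int_{-\ell}^{\ell}g\,w\,.$$
Since $g\ge 0$ is arbitrary, $w\le 0$ a.e., hence $w\le 0$ on $[-\ell,\ell]$ by continuity. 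If $w\equiv 0$ we are done; otherwise put $v:=-w$, so $v\in H^2(-\ell,\ell)$, $v\ge 0$, $v\not\equiv 0$.

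\emph{Step 2 (regularity and boundary conditions for $v$).} Restricting \eqref{ineqweak} to $\phi\in C^\infty_c(-\ell,\ell)$ with $\phi\ge 0$ and using the identity above (the boundary terms vanish) gives $L_m v\ge 0$ in $\mathcal D'(-\ell,\ell)$; a nonnegative distribution is a nonnegative Radon measure, so $L_m v=\mu\ge 0$ on $(-\ell,\ell)$, whence $v\in W^{3,1}_{loc}$ and $v''\in C((-\ell,\ell))$. I expect the technical heart of the proof to be the refinement up to the boundary: testing \eqref{ineqweak} with $\phi\in\mathcal K$ concentrated near one endpoint — with $\phi(\pm\ell)>0$ and $\phi'(\pm\ell)$ of arbitrary sign, which the constraint $\phi\ge 0$ permits — and passing to the limit in the integration-by-parts identity over $[-\ell+\delta,\ell-\delta]$, one shows that $\mu$ is finite, $v\in C^2([-\ell,\ell])$, and the natural boundary condition $v''(\pm\ell)-\sigma m^2 v(\pm\ell)=0$ holds; equivalently, the continuous functional $\phi\mapsto-\langle w,\phi\rangle_m$, nonnegative on the closed convex cone $\mathcal K$, is represented by a nonnegative finite measure on $[-\ell,\ell]$. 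This cone/positivity identification is the main obstacle.

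\emph{Step 3 (strict negativity).} Set $q:=m^2 v-v''\in C([-\ell,\ell])$. Since $L_m=(m^2-\partial^2)^2$, one has $-q''+m^2 q=(m^2-\partial^2)q=L_m v=\mu\ge 0$ in $(-\ell,\ell)$, while the boundary condition of Step 2 together with $v\ge 0$ and $\sigma<1$ gives $q(\pm\ell)=(1-\sigma)m^2 v(\pm\ell)\ge 0$. By the weak maximum principle for $-\partial^2+m^2$ (its zeroth-order coefficient $m^2$ being positive), $q\ge 0$ on $[-\ell,\ell]$, and by the strong maximum principle either $q\equiv 0$ in $(-\ell,\ell)$ or $q>0$ in $(-\ell,\ell)$. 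If $q\equiv 0$, then $-v''+m^2 v=0$ in $(-\ell,\ell)$, so $v=A\cosh(my)+B\sinh(my)$; a zero of $v$ at an interior $y_0$ would force $v(y)=C\sinh(m(y-y_0))$, which changes sign at $y_0$ unless $C=0$, and $v\ge 0$ then gives $v\equiv 0$, a contradiction — so $v>0$ in $(-\ell,\ell)$. If $q>0$ in $(-\ell,\ell)$ and $v(y_0)=0$ at some interior $y_0$, then $y_0$ is an interior minimum of $v\ge 0$, so $v''(y_0)\ge 0$, hence $q(y_0)=-v''(y_0)\le 0$, contradicting $q>0$ — so again $v>0$ in $(-\ell,\ell)$. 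In both cases $w=-v<0$ in $(-\ell,\ell)$, which together with Step 1 proves the claimed dichotomy.
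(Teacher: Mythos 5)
Your Step 1 coincides with the paper's first step (test \eqref{ineqweak} with the solution of \eqref{linODE} for nonnegative data, use Theorem \ref{ppp} and the symmetry of \eqref{mprodotto} to get $w\le 0$), and your Step 3 is internally sound \emph{given} Step 2. But Step 2, which is the entire load-bearing part of your alternative route, is announced rather than proved: the claims that the nonnegative functional $\phi\mapsto\langle v,\phi\rangle_m$ is represented by a finite nonnegative measure on the closed interval $[-\ell,\ell]$, that $v\in C^2([-\ell,\ell])$, and above all that the natural boundary condition $v''(\pm\ell)-\sigma m^2 v(\pm\ell)=0$ holds, are exactly what you label ``the main obstacle'' and then assume. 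Without that boundary condition you cannot write $q(\pm\ell)=(1-\sigma)m^2v(\pm\ell)\ge 0$, and the maximum-principle argument for $q=m^2v-v''$ never gets started. Filling the gap is doable but is genuine work: (a) positivity on $\mathcal{K}$ gives $|\langle v,\phi\rangle_m|\le\langle v,1\rangle_m\,\|\phi\|_{\infty}$ (test with $\|\phi\|_\infty\pm\phi$), so the functional extends to $C([-\ell,\ell])$ and Riesz yields the measure; (b) one must then identify $v$, via coercivity and uniqueness of the weak problem, as the solution with measure datum and prove regularity up to the endpoints ($v'''$ of bounded variation on $(-\ell,\ell)$, hence $v''$ continuous on $[-\ell,\ell]$); (c) one must check that possible atoms of the measure at $\pm\ell$ only distort the third-order natural conditions and not the second-order one you actually use. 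None of this is carried out in the proposal, so as written there is a genuine gap.

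It is worth noting that the paper's own proof shows this machinery is unnecessary. After the common first step, it argues: if $w$ is not strictly negative in $(-\ell,\ell)$, let $Z$ be its zero set and let $\phi_0\in H^2(-\ell,\ell)$ solve $\langle\phi_0,\psi\rangle_m=\int_{-\ell}^{\ell}\chi_Z\,\psi\,dy$ for all $\psi$; Theorem \ref{ppp} gives $\phi_0>0$ on $[-\ell,\ell]$, while $\langle\phi_0,w\rangle_m=\int_{-\ell}^{\ell}\chi_Z w\,dy=0$. Since $\phi_0$ is bounded away from zero and every $\phi\in H^2(-\ell,\ell)$ is bounded, one can pick $t_1\le 0\le t_0$ with $\phi+t_0\phi_0\ge 0$ and $\phi+t_1\phi_0\le 0$; inserting both into \eqref{ineqweak} and using $\langle\phi_0,w\rangle_m=0$ forces $\langle\phi,w\rangle_m=0$ for all $\phi$, hence $w\equiv 0$ upon taking $\phi=w$. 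So the dichotomy follows from Theorem \ref{ppp} and the variational setting alone, with no boundary regularity, no measure representation, and no second-order reduction. If you wish to keep your route you must supply (a)--(c) in full; otherwise the sandwiching argument with $\phi_0$ is the shorter and safer path.
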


\begin{proof}
	Let $f\in \mathcal{K}$ and let $\phi_{f}$ be the unique solution to
	$$\langle\phi_f,\psi\rangle_m=\int_{-\ell}^{\ell} f\psi\,dy \quad \text{ for all } \psi\in  H^2(-\ell, \ell)\,.$$
	By Theorem \ref{ppp}, $\phi_{f}\in \mathcal{K}$. Inserting $\phi_f$ in \eq{ineqweak} we infer
	$$\int_{-\ell}^{\ell} f w\,dy =\langle w,\phi_f\rangle_m \leq 0 \quad \text{ for all } f\in \mathcal{K}\,.$$
	Hence, $w \leq 0$ in $(-\ell, \ell)$. By contradiction, assume that $w \not < 0$ in $(-\ell, \ell)$. Then, if $Z:=\{y\in (-\ell, \ell): w(y)= 0\}$, we have that the characteristic function of $Z$ satisfies $\chi_Z\geq 0$ and $\chi_Z \not \equiv 0$. Let now $\phi_0\in H^2(-\ell, \ell)$ satisfy
	$$\langle\phi_0,\psi\rangle_m=\int_{-\ell}^{\ell} \chi_Z \psi\,dy \quad \text{ for all } \psi\in  H^2(-\ell, \ell)\,.$$
	Since, by elliptic regularity, $\phi_0\in C^3([-\ell, \ell])$ and, by Theorem \ref{ppp}, $\phi_0>0$ in $[-\ell, \ell]$,
	we deduce that for every $\phi\in  H^2(-\ell, \ell)$ there exist $t_1\leq 0\leq t_0$: $\phi +t_0\phi_0\geq 0$ and $\phi+t_1\phi_0\leq 0$ in $[-\ell, \ell]$.
	Furthermore, by definition of $\phi_0$ we have
	$$\langle\phi_0,w\rangle_m=\int_{-\ell}^{\ell} \chi_Z w\,dy=0\,.$$ Combining this with \eq{ineqweak}, we deduce
	$$0\geq \langle \phi+t_0\phi_0,w\rangle_m=\langle\phi,w\rangle_m $$
	and
	$$0\leq \langle \phi+t_1\phi_0,w\rangle_m=\langle\phi,w\rangle_m \,.$$
	Namely,
	$$\langle\phi,w\rangle_m=0 \quad \text{ for all } \phi\in  H^2(-\ell, \ell)\,.$$
	Taking $\phi=w$ in the above inequality we conclude $w \equiv 0$ in $(-\ell, \ell)$ and the proof follows.
\end{proof}

We conclude this section with the proof of Theorem \ref{ppp}.

\medskip

\textbf{Proof of Theorem \ref{ppp}.}

The proof follows by a direct inspection of the sign of the unique solution to \eq{eqppp}.
First we note that, for $m\geq 1$ fixed and $f\in L^2(-\ell, \ell)$, all solutions to the equation
$$w''''(y)-2m^2w''(y)+m^4w(y)=\overline f \quad \text{in } \mathcal{D}'(\R)\,,$$
where $\overline f$ denotes the trivial extension of $f$ to $\R$, write
$$w(y)=c_1 \cosh(my)+c_2 \sinh(my)+c_3 y\cosh(my) +c_4y\sinh(my)+w_p(y)\,,$$
with $c_1,c_2,c_3,c_4 \in \R$ and
$$w_p(y)=(q_m*\overline f)(y)=\int_{-\infty}^{+\infty}q_m(t) \overline f(y-t)\,dt$$
where
$$q_m(y)=\frac{(1+m|y|)e^{-m|y|}}{4m^3}\,.$$
Exploiting the regularity of $q_m$, it follows that all the above solutions belong to $C^3(\R)$ (the regularity can be improved by increasing the regularity of $f$);
the thesis can be reached proving that
\begin{equation}
\widetilde{w}(y)=c_1\cosh(my)+c_2 \sinh(my)+c_3 y\cosh(my)+c_4y\sinh(my)>0
\end{equation}
since $w_p(y)\geq 0$.\par If we fix the constants $c_1,c_2,c_3,c_4 \in \R$ in such a way that:
\begin{equation*}
\begin{cases}
w''(\pm\ell)-\sigma m^2w(\pm\ell)=0 & \qquad  \\
w'''(\pm\ell)-(2-\sigma)m^2w'(\pm\ell)=0\,,& \qquad \,
\end{cases}
\end{equation*}
then the restriction of $w$ to $[-\ell, \ell]$, that we will still denote with $w$, is the unique solution to \eq{eqppp}.
More precisely, by imposing the above conditions we obtain the system
\small{\begin{equation*}
	\begin{cases}
	(c_1m^2+2c_4m) \cosh(m\ell)+(c_2 m^2+2c_3 m) \sinh(m\ell)+c_3m^2\ell \cosh(m\ell) +c_4m^2 \ell \sinh(m \ell)+w''_p(\ell)=\\
	\sigma m^2[c_1 \cosh(m\ell)+c_2 \sinh(m \ell)+c_3 \ell\cosh(m\ell) +c_4\ell\sinh(m\ell)+w_p(\ell)]\\
	(c_1m^2+2c_4m) \cosh(m\ell)-(c_2 m^2+2c_3 m) \sinh(m\ell)-c_3m^2\ell \cosh(m\ell) +c_4m^2 \ell \sinh(m \ell)+w''_p(-\ell)=\\
	\sigma m^2[c_1 \cosh(m\ell)-c_2 \sinh(m \ell)-c_3 \ell\cosh(m\ell) +c_4\ell\sinh(m\ell)+w_p(-\ell)]\\
	(c_2m^3+3c_3m^2) \cosh(m\ell)+(c_1 m^3+3c_4 m^2) \sinh(m\ell)+c_4m^3\ell \cosh(m\ell) +c_3m^3 \ell \sinh(m \ell)+w'''_p(\ell)=\\
	-(\sigma-2) m^2[(c_2m+c_3) \cosh(m\ell)+(c_1m+c_4) \sinh(m \ell)+c_4 m \ell\cosh(m\ell) +c_3m\ell\sinh(m\ell)+w_p'(\ell)]\\
	(c_2m^3+3c_3m^2) \cosh(m\ell)-(c_1 m^3+3c_4 m^2) \sinh(m\ell)-c_4m^3\ell \cosh(m\ell) +c_3m^3 \ell \sinh(m \ell)+w'''_p(-\ell)=\\
	-(\sigma-2) m^2[(c_2m+c_3) \cosh(m\ell)-(c_1m+c_4) \sinh(m \ell)-c_4 m \ell\cosh(m\ell) +c_3m\ell\sinh(m\ell)+w_p'(-\ell)]\,
	\end{cases}
	\end{equation*}}
which decouples in the following two systems
\small{\begin{equation*}
	\begin{cases}
	c_1[2m^2(1-\sigma) \cosh(m\ell)]+c_4[ 4 m\cosh(m\ell)+2m^2(1-\sigma) \ell \sinh(m \ell)]=\\
	\sigma m^2[w_p(\ell)+w_p(-\ell)]-[w_p''(\ell)+w_p''(-\ell)]\\
	c_1[2m^3(\sigma-1) \sinh(m\ell)]+c_4[ 2 m^2(\sigma+1)\sinh(m\ell)+2m^3(\sigma-1) \ell \cosh(m \ell)]=\\
	-(\sigma-2) m^2[w'_p(\ell)-w_p'(-\ell)]-[w_p'''(\ell)-w_p'''(-\ell)]
	\end{cases}
	\end{equation*}}
\small{\begin{equation*}
	\begin{cases}
	c_2[2m^2(1-\sigma) \sinh(m\ell)]+c_3[ 4 m\sinh(m\ell)+2m^2(1-\sigma) \ell \cosh(m \ell)]=\\
	\sigma m^2[w_p(\ell)-w_p(-\ell)]-[w_p''(\ell)-w_p''(-\ell)]\\
	c_2[2m^3(\sigma-1) \cosh(m\ell)]+c_3[ 2 m^2(\sigma+1)\cosh(m\ell)+2m^3(\sigma-1) \ell \sinh(m \ell)]=\\
	-(\sigma-2) m^2[w'_p(\ell)+w_p'(-\ell)]-[w_p'''(\ell)+w_p'''(-\ell)]\,.
	\end{cases}
	\end{equation*}}
By setting
$$F_m(\ell):=(3+\sigma)\sinh(m\ell)\cosh(m\ell)-m \ell (1-\sigma)>0\,,$$
$$\overline F_m(\ell):=(3+\sigma)\sinh(m\ell)\cosh(m\ell)+m \ell (1-\sigma)>0\,,$$
$$A_m(\ell):=(1+\sigma)\sinh(m\ell)-(1-\sigma)m\ell \cosh(m\ell)\,, \quad B_m(\ell):= 2\cosh(m\ell)+(1-\sigma)m\ell\sinh(m\ell)\,,$$
$$\overline A_m(\ell):=(1+\sigma)\cosh(m\ell)-(1-\sigma)m\ell \sinh(m\ell)\,,\quad \overline B_m(\ell):= 2\sinh(m\ell)+(1-\sigma)m\ell\cosh(m\ell)\,,$$
$$V_m(\ell):=\sigma m^2 w_p(\ell)-w''_p(\ell)\,, \quad W_m(\ell):=(\sigma-2) m^2 w_p'(\ell)+w'''_p(\ell)\,,$$

$$V_m(-\ell):=\sigma m^2 w_p(-\ell)-w''_p(-\ell)\,, \quad W_m(-\ell):=(\sigma-2) m^2 w_p'(-\ell)+w'''_p(-\ell)\,,$$

the solutions to the above systems write
$$c_1=\frac{m A_m(\ell)[V_m(\ell)+V_m(-\ell)]+B_m(\ell)[W_m(\ell)-W_m(-\ell)]}{2m^3(1-\sigma) F_m(\ell)}$$
$$c_2=\frac{m \overline A_m(\ell)[V_m(\ell)-V_m(-\ell)]+\overline B_m(\ell)[W_m(\ell)+W_m(-\ell)]}{2m^3(1-\sigma) \overline F_m(\ell)}$$
$$c_3=\frac{m \cosh(m\ell)[V_m(\ell)-V_m(-\ell)]-\sinh(m\ell)[W_m(\ell)+W_m(-\ell)]}{2m^2 \overline F_m(\ell)}$$
$$c_4=\frac{m \sinh(m\ell)[V_m(\ell)+V_m(-\ell)]-\cosh(m\ell)[W_m(\ell)-W_m(-\ell)]}{2m^2 F_m(\ell)}\,.$$

By exploiting the symmetry of $q_m$, for $i=0$ and $i=2$, we have
$$w_p^{(i)}(\ell)=\int_{0}^{2\ell}q_m^{ (i) }(t)  f(\ell-t)\,dt\,, \quad w_p^{(i)}(-\ell)=\int_{0}^{2\ell}q_m^{ (i) }(t)  f(-\ell+t)\,dt\,,$$
while, for $i=1$ and $i=3$, we have
$$w_p^{(i)}(\ell)=\int_{0}^{2\ell}q_m^{ (i) }(t) f(\ell-t)\,dt\,, \quad w_p^{(i)}(-\ell)=-\int_{0}^{2\ell}q_m^{ (i) }(t)  f(-\ell+t)\,dt\,,$$
where $w_p^{(i)}$ and $q_m^{ (i)}$ denotes, respectively, the $i$-th derivate of $w_{p}$ and of $q_m$. Hence,
$$V_m(\ell)=\int_{0}^{2\ell}\frac{e^{-mt}}{4m}(1+\sigma-mt(1-\sigma))  f(\ell-t)\,dt\,,\quad \quad W_m(\ell)=\int_{0}^{2\ell}\frac{e^{-mt}}{4}(2+mt(1-\sigma)) f(\ell-t)\,dt\,$$
and
$$V_m(-\ell)=\int_{0}^{2\ell}\frac{e^{-mt}}{4m}(1+\sigma-mt(1-\sigma))  f(-\ell+t)\,dt\,,\quad \quad W_m(-\ell)=-\int_{0}^{2\ell}\frac{e^{-mt}}{4}(2+mt(1-\sigma)) f(-\ell+t)\,dt\,.$$
First of all we study the sign of the coefficients $c_1$ and $c_4$. Since $F_m(\ell)>0$, $c_1$ has the same sign of
$$m A_m(\ell)[V_m(\ell)+V_m(-\ell)]+B_m(\ell)[W_m(\ell)-W_m(-\ell)]=$$
$$
\int_{0}^{2\ell}\frac{e^{-mt}}{4}\,\left[A_m(\ell)(1+\sigma-m(1-\sigma)t)+B_m(\ell)(2+m(1-\sigma)t) \right] \left[f(\ell-t)+ f(-\ell+t)\right]\,dt
$$

\begin{equation}\label{c11}
=[\sinh(m\ell)((1+\sigma)^2+2m\ell(1-\sigma))+\cosh(m\ell)(4-(1-\sigma^2)m\ell)]\int_{0}^{2\ell}\frac{e^{-mt}}{4}\,[f(\ell-t)+ f(-\ell+t)]\,dt+
\end{equation}
\begin{equation}\label{c12}
+m(1\!-\!\sigma)\!\left[2\cosh(m\ell)\!-\!(\!1+\!\sigma)\sinh(m\ell)\!+\!(1\!-\!\sigma)m\ell(\cosh(m\ell)\!+\!\sinh(m\ell))\right]
\int_{0}^{2\ell}\dfrac{e^{-mt}t}{4}\, [f(\ell-t)+ f(-\ell+t)]\,dt\,.
\end{equation}

We observe that
\begin{equation}\label{diseq1}
2\cosh(z)>(1+\sigma)\sinh(z)
\end{equation}
for $z>0$ and for all $\sigma\in (0,1/2)$, implying that \eqref{c12} is positive; about  the sign of \eqref{c11} we introduce the map
$$
z\mapsto g(z):=\sinh(z)((1+\sigma)^2+2z(1-\sigma))+\cosh(z)(4-(1-\sigma^2)z)
$$
and we compute its derivative
$$g'(z)=2\sigma(1+\sigma)\cosh(z)+2(3-\sigma)\sinh(z)+z(1-\sigma)[2\cosh(z)-(1+\sigma)\sinh(z)].$$
Thanks to \eqref{diseq1}, for $z>0$ we obtain $g'(z)>0$ so that $g(z)$ is always positive ($g(0)=4$)  and in particular
$c_1>0$.  The sign of $c_4$ depends on
$$m \sinh(m\ell)[V_m(\ell)+V_m(-\ell)]-\cosh(m\ell)[W_m(\ell)-W_m(-\ell)]=$$
\begin{equation*}
\begin{split}
\int_{0}^{2\ell}\frac{e^{-mt}}{4}\big[(1+\sigma)\sinh(m\ell)-2\cosh(m\ell)-mt(1-\sigma)(\sinh(m\ell)+\cosh(m\ell))\big][f(\ell-t)+ f(-\ell+t)]\,dt
\end{split}
\end{equation*}
that, applying again \eqref{diseq1}, gives $c_4<0$ for all $\sigma\in (0,1/2)$ and $m\ell>0$.\par
For our purposes we need to compare the absolute value of $c_4$ and $c_3$; since the sign of $c_3$ is not known a priori,
we study the sign of $2m^2(|c_4|\pm c_3)$, i.e.
\begin{align*}
& \int_{0}^{2\ell}\frac{e^{-mt}}{4 F_m(\ell)}\big[2\cosh(m\ell)-(1+\sigma)\sinh(m\ell)+mt(1-\sigma)(\sinh(m\ell)+\cosh(m\ell))\big][f(\ell-t)+ f(-\ell+t)]\,dt \\
&\pm\int_{0}^{2\ell}\frac{e^{-mt}}{4 \overline{F}_m(\ell)}\big[(1+\sigma)\cosh(m\ell)-2\sinh(m\ell)-mt(1-\sigma)(\sinh(m\ell)+\cosh(m\ell))\big]
[f(\ell-t)-f(-\ell+t)]\,dt  \, .
\end{align*}
Recalling that $0<F_m(\ell)<\overline{F}_m(\ell)$, we obtain the positivity of
\begin{align*}
& m(1\!\!-\!\!\sigma)[\sinh(m\ell)\!+\!\cosh(m\ell)]  \bigg\{\!\bigg[\!\frac{1}{F_m(\ell)}\!\!\mp\!\!\frac{1}{\overline{F}_m(\ell)}\bigg]\!\!\int_{0}^{2\ell}
\frac{e^{-mt}t}{4}f(\ell\!\!-\!\!t)\,dt
\!+\!\bigg[\frac{1}{F_m(\ell)}\!\!\pm\!\! \frac{1}{\overline{F}_m(\ell)}\bigg]\!\!\int_{0}^{2\ell}\dfrac{e^{-mt}t}{4}f(-\ell+t)\,dt\bigg\};
\end{align*}
thus $2m^2(|c_4|\pm c_3)>0$ if
\begin{equation*}
\frac{2\cosh(m\ell)-(1+\sigma)\sinh(m\ell)}{F_m(\ell)}\pm\frac{(1+\sigma)\cosh(m\ell)-2\sinh(m\ell)}{\overline{F}_m(\ell)}>0;
\end{equation*}
the achievement follows from the positivity  of $\big(\cosh(z)\mp\sinh(z)\big)\big(2\pm(1+\sigma)\big)$ for all $z>0$ and $\sigma\in(0,1/2)$.

Fixed $m\geq1$, we set
$$
\widetilde{\psi}(t):=m\,\widetilde{w}(t/m)=c_1m\cosh t+c_2m\sinh t+c_3t\cosh t+c_4t\sinh t
$$
and we focus on the qualitative behaviour of $\widetilde{\psi}$ where, from above, $c_1>0$, $c_4<0$ and $c_4<c_3<-c_4$. Clearly, $\widetilde{\psi}(t)$
is continuous and differentiable on $\mathbb{R}$, moreover
\begin{equation*}
\widetilde{\psi}(0)=m\,c_1>0\,;\quad \widetilde{\psi}(t)\sim \dfrac{c_3\pm c_4}{2}te^{|t|}\rightarrow-\infty \hspace{2mm}{\rm for }
\hspace{2mm}t\rightarrow\pm\infty\,. \quad
\end{equation*}
This fact implies that $\widetilde{\psi}(t)$ has at least two zeros of opposite sign on $\mathbb{R}$; we prove now that $\widetilde{\psi}(t)$
has exactly two distinct zeros on $\mathbb{R}$.

We know that $\widetilde{\psi}(t)=0$ if and only if
$$
\alpha(t):=(c_2m+c_4t)\tanh t+c_3t+c_1m=0 \, .
$$
Computing $\alpha'(t)=\frac{1}{2\cosh^2(t)}(2c_3\cosh^2(t)+c_4\sinh(2t)+2c_4t+2c_2m)$ we observe that
\begin{equation}\label{zeros}
\exists!\hspace{1mm} \overline{t}\in\mathbb{R}: \alpha'(\overline{t})=0 \, .
\end{equation}
This follows because $\beta(t):=2c_3\cosh^2(t)+c_4\sinh(2t)+2c_4t+2c_2m$ is always decreasing on $\mathbb{R}$; indeed $c_4<0$, $|c_4|>|c_3|$ so that $\beta'(t)=2(c_3\sinh(2t)+c_4\cosh(2t)+c_4)<0$. Moreover $\beta(t)\sim\dfrac{c_3\pm c_4}{2}e^{2|t|}\rightarrow\mp\infty$
for $t\rightarrow\pm\infty$. Now let us suppose for contradiction that $\widetilde{\psi}(t)$ has more than two zeros on $\mathbb{R}$, for instance it has 3
distinct zeros $t_1<t_2<t_3$; this implies that $\alpha(t)$ has 3 distinct zeros, then, the Rolle's Theorem applied to $\alpha(t)$ in the intervals $[t_1,t_2]$
and $[t_2,t_3]$ ensures  the existence of  at least two points in which $\alpha'(t)=0$ on $\mathbb{R}$ and this contradicts \eqref{zeros}. Hence, $\widetilde{\psi}$, and
in turn also $\widetilde{w}$, has exactly two zeros of opposite sign on $\mathbb{R}$.

Since $\widetilde{w}(y)$ has exactly two zeros of opposite sign on $\mathbb{R}$ and $\widetilde{w}(0)>0$, if we prove that $\widetilde{w}(\pm\ell)>0$
the thesis follows. To this aim we study the sign of $\widetilde{w}(\pm\ell)=c_1\cosh(m\ell)\pm c_2\sinh(m\ell)\pm c_3\ell\cosh(m\ell)+c_4\ell\sinh(m\ell)$,
in particular we consider
\begin{align*}
& 2m^2\widetilde{w}(\ell)=\int_{0}^{2\ell}\frac{e^{-mt}}{4m}[C_m(\ell)f(\ell-t)+\overline{C}_m(\ell)f(-\ell+t)]\,dt
+\int_{0}^{2\ell}\frac{e^{-mt}t}{4}[D_m(\ell)f(\ell-t)+\overline{D}_m(\ell)f(-\ell+t)]\,dt \\[8pt]
& 2m^2\widetilde{w}(-\ell)=\int_{0}^{2\ell}\frac{e^{-mt}}{4m}[\overline{C}_m(\ell)f(\ell-t)+C_m(\ell)f(-\ell+t)]\,dt
+\int_{0}^{2\ell}\frac{e^{-mt}t}{4}[\overline{D}_m(\ell)f(\ell-t)+D_m(\ell)f(-\ell+t)]\,dt
\end{align*}
where
\begin{align*}
& C_m(\ell)\!\!=\!\!\frac{4}{1-\sigma}\bigg(\frac{\cosh^2(m\ell)}{F_m(\ell)}\!+\!\frac{\sinh^2(m\ell)}{\overline{F}_m(\ell)}\bigg)
\!+\!\frac{(1+\sigma)^2}{2(1-\sigma)}\sinh(2m\ell)\bigg(\frac{1}{F_m(\ell)}\!+\!\frac{1}{\overline{F}_m(\ell)}\bigg)
\!-\!m\ell(1+\sigma)\bigg(\frac{1}{F_m(\ell)}\!-\!\frac{1}{\overline{F}_m(\ell)}\bigg)\\
& D_m(\ell)=2\bigg(\frac{\cosh^2(m\ell)}{F_m(\ell)}+\frac{\sinh^2(m\ell)}{\overline{F}_m(\ell)}\bigg)-\frac{1+\sigma}{2}\sinh(2m\ell)\bigg(\frac{1}{F_m(\ell)}
+\frac{1}{\overline{F}_m(\ell)}\bigg)+m\ell(1-\sigma)\bigg(\dfrac{1}{F_m(\ell)}-\frac{1}{\overline{F}_m(\ell)}\bigg) \\
& \overline{C}_m(\ell)\!\!=\!\!\frac{4}{1-\sigma}\bigg(\frac{\cosh^2(m\ell)}{F_m(\ell)}\!-\! \frac{\sinh^2(m\ell)}{\overline{F}_m(\ell)}\bigg)
+\frac{(1+\sigma)^2}{2(1-\sigma)}\sinh(2m\ell)\bigg(\frac{1}{F_m(\ell)}\!-\!\frac{1}{\overline{F}_m(\ell)}\bigg)\!-\!m\ell(1+\sigma)\bigg(\frac{1}{F_m(\ell)}
\!+\!\frac{1}{\overline{F}_m(\ell)}\bigg) \\
&\overline{D}_m(\ell)=2\bigg(\frac{\cosh^2(m\ell)}{F_m(\ell)}-\frac{\sinh^2(m\ell)}{\overline{F}_m(\ell)}\bigg)
-\frac{1+\sigma}{2}\sinh(2m\ell)\bigg(\frac{1}{F_m(\ell)}-\frac{1}{\overline{F}_m(\ell)}\bigg)
+m\ell(1-\sigma)\bigg(\frac{1}{F_m(\ell)}+\frac{1}{\overline{F}_m(\ell)}\bigg).
\end{align*}
The final part of the proof is devoted to prove that the coefficients $C_m(\ell), D_m(\ell), \overline{C}_m(\ell)$ and $\overline{D}_m(\ell)$ are positive.
We recall that
$$
\frac{1}{F_m(\ell)}+\dfrac{1}{\overline{F}_m(\ell)}=\dfrac{(3+\sigma)\sinh(2m\ell)}{F_m(\ell)\overline{F}_m(\ell)}>0
\hspace{10mm}
\frac{1}{F_m(\ell)}-\dfrac{1}{\overline{F}_m(\ell)}=\dfrac{2m\ell(1-\sigma)}{F_m(\ell)\overline{F}_m(\ell)}>0 \, ,
$$
and we introduce four maps related respectively to the previous coefficients
\begin{align*}
& z\mapsto p(z):=\dfrac{2(3+\sigma)}{1-\sigma}\sinh(2z)\cosh(2z)+4z+\dfrac{(1+\sigma)^2(3+\sigma)}{2(1-\sigma)}\sinh^2(2z)-2(1-\sigma^2)z^2 \\[8pt]
& z\mapsto q(z):=\dfrac{3+\sigma}{2}\sinh(2z)[2\cosh(2z)-(1+\sigma)\sinh(2z)]+2(1-\sigma)z+2(1-\sigma)^2z^2 \\[8pt]
& z\mapsto r(z):=\dfrac{2(3+\sigma)}{1-\sigma}\sinh(2z)+z[4\cosh(2z)-2(1+\sigma)\sinh(2z)] \\[8pt]
& z\mapsto s(z):=(3+\sigma)\sinh(2z)+(1-\sigma)z[2\cosh(2z)-(1+\sigma)\sinh(2z)]+(1-\sigma)(3+\sigma)z\sinh(2z).
\end{align*}
Thanks to \eqref{diseq1} $q(z)$, $r(z)$ and $s(z)$ are always positive for $z>0$ and for all $\sigma\in(0,1/2)$.
The same conclusion holds for the maps $p(z)$, due to the following inequality
$$
\frac{(1+\sigma)^2(3+\sigma)}{1-\sigma}\sinh^2(2z)>3\sinh^2(2z)>(1-\sigma^2)(2z)^2 \, .
$$
This completes the proof.


\section{Proof of Theorem \ref{thm-exist-qual} and Proposition \ref{comparison_esy}} \label{s:proofs-1}

\subsection{Proof of Theorem \ref{thm-exist-qual}}

We start with the existence issue.

\begin{lemma}\label{exist}
The infimum in \eqref{CP} is achieved.
\end{lemma}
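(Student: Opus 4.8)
The plan is to apply the direct method of the calculus of variations to the map $p\mapsto\lambda_1(p)$ over $P_{\alpha,\beta}$, using the weak-$*$ compactness of bounded sets in $L^\infty(\Omega)$ together with the compactness of the embedding $H^2(\Omega)\hookrightarrow L^2(\Omega)$.

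First I would take a minimizing sequence $\{p_n\}\subset P_{\alpha,\beta}$, i.e. $\lambda_1(p_n)\to\lambda_{\alpha,\beta}$, and for each $n$ let $u_n\in H^2_*(\Omega)\setminus\{0\}$ be a first eigenfunction of $\lambda_1(p_n)$, normalised by $\integ p_n u_n^2\,dx\,dy=1$. Then $\|u_n\|_{H^2_*}^2=\lambda_1(p_n)$ is bounded, so up to a subsequence $u_n\weakly u$ in $H^2_*(\Omega)$ and, by the compact embedding $H^2(\Omega)\hookrightarrow\hookrightarrow L^2(\Omega)$, $u_n\to u$ strongly in $L^2(\Omega)$. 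Since $\alpha\le p_n\le\beta$ a.e., $\{p_n\}$ is bounded in $L^\infty(\Omega)$, hence up to a further subsequence $p_n\weakly\overline p$ weakly-$*$ in $L^\infty(\Omega)$. Testing the weak-$*$ convergence against $\chi_A\in L^1(\Omega)$ for measurable $A\subseteq\Omega$ gives $\alpha|A|\le\integ_A\overline p\le\beta|A|$, whence $\alpha\le\overline p\le\beta$ a.e., and testing against $1\in L^1(\Omega)$ gives $\integ\overline p\,dx\,dy=\lim_n\integ p_n\,dx\,dy=|\Omega|$; therefore $\overline p\in P_{\alpha,\beta}$.

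The key point is to pass to the limit in $\integ p_n u_n^2\,dx\,dy$. Writing $\integ p_n u_n^2\,dx\,dy=\integ p_n(u_n^2-u^2)\,dx\,dy+\integ p_n u^2\,dx\,dy$, the first term is bounded in absolute value by $\beta\,\|u_n^2-u^2\|_{L^1(\Omega)}\to0$ (because $u_n\to u$ in $L^2(\Omega)$ forces $u_n^2\to u^2$ in $L^1(\Omega)$), while the second converges to $\integ\overline p\,u^2\,dx\,dy$ since $u^2\in L^1(\Omega)$ and $p_n\weakly\overline p$ weakly-$*$. Hence $\integ\overline p\,u^2\,dx\,dy=1$; in particular $u\not\equiv0$. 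By weak lower semicontinuity of the $H^2_*(\Omega)$-norm, $\|u\|_{H^2_*}^2\le\liminf_n\|u_n\|_{H^2_*}^2=\lambda_{\alpha,\beta}$, so by the variational characterisation \eqref{lambdaP}
$$
\lambda_1(\overline p)\le\frac{\|u\|_{H^2_*}^2}{\integ\overline p\,u^2\,dx\,dy}=\|u\|_{H^2_*}^2\le\lambda_{\alpha,\beta}\,.
$$
Since $\overline p\in P_{\alpha,\beta}$ also gives $\lambda_1(\overline p)\ge\lambda_{\alpha,\beta}$, we conclude $\lambda_1(\overline p)=\lambda_{\alpha,\beta}$, i.e. the infimum in \eqref{CP} is attained.

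The only genuinely delicate step I expect is the product passage to the limit just described, which mixes the weak-$*$ convergence of the weights with the strong $L^1$ convergence of $u_n^2$ and must also produce a nontrivial limit eigenfunction $u$ (so that the normalisation survives); the remaining ingredients — weak-$*$ closedness of $P_{\alpha,\beta}$, weak lower semicontinuity of the norm, and compactness of the Sobolev embedding — are standard.
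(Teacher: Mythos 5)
Your proof is correct and follows essentially the same direct-method argument as the paper: a minimizing sequence of weights with normalized first eigenfunctions, compact embedding $H^2_*(\Omega)\hookrightarrow L^2(\Omega)$, weak convergence of the weights, passage to the limit in the normalization $\int_\Omega p_n u_n^2\,dx\,dy=1$, and weak lower semicontinuity of the norm. The only cosmetic difference is that you use weak-$*$ convergence in $L^\infty(\Omega)$ tested against $u^2\in L^1(\Omega)$, whereas the paper uses weak $L^2$ convergence of $p_n$ together with the planar embedding $H^2_*(\Omega)\subset L^\infty(\Omega)$; both handle the mixed term equally well.
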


\begin{proof}
Let $\{ p_m \}_m \subset P_{\alpha,\beta}$ be a minimizing sequence for $ \lambda_{\alpha,\beta}$, i.e.

$$
\lambda_1(p_m)= \lambda_{\alpha,\beta}+o(1) \quad \text{as } m \rightarrow \infty\,.
$$
Furthermore, let $u_{p_m} \in H^2_*(\Omega)$ be a (normalized) eigenfunction to $\lambda_1(p_m)$, namely $\lambda_1(p_m) = ||u_{p_m}||^2_{H^2_*(\Omega)}$
and $\int_{\Omega} p_m \, u_{p_m}^2 \, dx\,dy = 1.$ This immediately implies
$||u_{p_m}||_{H^2_*} \leq C,$ for some positive constant $C.$ Therefore, using the compact embedding of $H^2_*(\Omega) \hookrightarrow L^2(\Omega),$
 we can extract two subsequences, still denoted
by $u_{p_m},$ such that

$$
u_{p_m} \rightharpoonup  \overline{u} \quad \mbox{in} \ H^2_*(\Omega) \quad \mbox{as} \ m \rightarrow \infty,
$$

$$
u_{p_m} \rightarrow \overline{u} \quad \mbox{in} \ L^2(\Omega) \quad \mbox{as} \ m \rightarrow \infty.
$$

Moreover, $p_m \in P_{\alpha,\beta}$ implies $||p_{m}||_{L^{\infty}(\Omega)} \leq \beta$ and therefore up to
a subsequence, $p_m \rightharpoonup \overline{p}$ in $L^2(\Omega)$ as $m \rightarrow \infty,$ so that also $\overline p$ satisfies the integral condition in \eqref{eq:famiglia}.
Since strongly closed convex sets are weakly closed, we infer that $\alpha \leq \overline{p}\leq \beta$ a.e. in $\Omega$.
Hence, $\overline{p} \in P_{\alpha, \beta}$. On the other hand, we obtain

\begin{align*}
\left| \int_{\Omega}( p_m \, u_{p_m}^2 - \overline{p} \,\overline{u}^2 ) dx\, dy \right|
& = \left|  \int_{\Omega}  p_m ( u_{p_m}^2 -  \overline{u}^2 ) \, dx\, dy +  \int_{\Omega} \overline{u}^2( p_m - \overline{p}) \, dx\, dy  \right| \notag \\
& \leq \beta \int_{\Omega} |(u_{p_m}-\overline{u})(u_{p_m}+\overline{u}) | \, dx\, dy + \| \overline{u}\|_{L^{\infty}(\Omega)}
\int_{\Omega} |\overline{u}|\, |p_m - \overline{p}| \, dx\, dy\\
&=2\beta \, \| \overline{u}\|_{ L^2(\Omega)}\,\|u_{p_m}- \overline{u}\|_{ L^2(\Omega)}+ o(1)=o(1) \quad \mbox{as} \ m \rightarrow \infty\,, \notag
\end{align*}
where we have exploited the fact that $H^2_*(\Omega) \subset L^{\infty}(\Omega)$ since $\Omega$ is a planar domain. In particular, we conclude that
$\int_{\Omega} \overline{p} \, \overline{u}^2 \, dx\, dy = 1.$
Furthermore,

$$
\lambda_1(\overline{p}) \leq ||\overline{u}||^2_{H^2_*} \leq \liminf_{m \rightarrow \infty} ||u_{p_m}\|_{H^2_*}^2 = \lambda_{\alpha,\beta} \, .
$$
Whence

$$
\lambda_{\alpha,\beta} \leq \lambda_1(\overline{p}) = ||\overline{u}||^2_{H^2_*} \leq \lambda_{\alpha,\beta}.
$$
Therefore, the couple $(\overline{p},\overline{u})$ is an optimal pair and $u_{\overline{p}}=\overline{u}$; this completes the proof.

\end{proof}

To problem \eqref{CP} we associate the following infimum problem

\begin{equation}\label{ACP}
\Lambda_{\alpha,\beta} := \inf_{\eta \in N_{\alpha,\beta}} \, \min_{u \in H^2_*(\Omega)\setminus \{0\}}
\frac{||u||_{H^2_*(\Omega)}^2 \, + \,  \lambda_{\alpha,\beta}(\beta-\alpha) \int_{\Omega} \eta \, u^2 \, dx\, dy}{\int_{\Omega} u^2 \, dx\, dy},
\end{equation}
where $\lambda_{\alpha,\beta}$ is as in \eqref{CP} and

$$
N_{\alpha,\beta} = \left\{ \eta\in L^\infty(\Omega):\, 0 \leq \eta \leq 1 \ \text{a.e. in } \Omega \ \mbox{and} \ \int_{\Omega} \eta \, dx\, dy
= \frac{\beta -1}{\beta-\alpha} |\Omega| \right\} \, .
$$

The proof of Lemma \ref{exist} with minor changes  shows that also problem \eqref{ACP} admits an optimal pair
$(\overline{\eta},u_{\overline \eta}) \in N_{\alpha,\beta}  \times H^2_*(\Omega)$. Furthermore, there is an one-to-one correspondence
between problems \eqref{CP} and \eqref{ACP}. Indeed, to any
$\eta \in N_{\alpha,\beta}$ we  can associate $p_{\eta} \in P_{\alpha,\beta}$ by setting

$$
p_{\eta}=\beta-\eta(\beta-\alpha).
$$

Clearly $\alpha \leq p_{\eta} \leq \beta$ and

$$
\int_{\Omega} p_{\eta} \, dx\, dy = \beta |\Omega| - (\beta - \alpha) \int_{\Omega} \eta \, dx\, dy = |\Omega|.
$$
Viceversa to any $p \in P_{\alpha,\beta}$ we can associate $\eta_{p} \in N_{\alpha,\beta}$ by setting

$$
\eta_p = \dfrac{\beta - p}{\beta - \alpha}.
$$
Clearly $0 \leq \eta \leq 1$ and $\int_{\Omega} \eta_{p} \, dx\, dy = \frac{\beta - 1}{\beta - \alpha} |\Omega|.$ Furthermore, we have

\begin{lemma}\label{thm-1}
Let $\lambda_{\alpha, \beta}$ and $\Lambda_{\alpha, \beta}$ be as defined in \eqref{CP} and in \eqref{ACP}. There holds
$$
\Lambda_{\alpha, \beta} = \lambda_{\alpha, \beta}\,\beta.
$$
\end{lemma}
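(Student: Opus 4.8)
The strategy is to show the two inequalities $\Lambda_{\alpha,\beta}\le \lambda_{\alpha,\beta}\beta$ and $\Lambda_{\alpha,\beta}\ge \lambda_{\alpha,\beta}\beta$ separately, exploiting in both directions the one-to-one correspondence $p\leftrightarrow\eta_p=\tfrac{\beta-p}{\beta-\alpha}$ between $P_{\alpha,\beta}$ and $N_{\alpha,\beta}$ together with the algebraic identity $p_\eta = \beta-\eta(\beta-\alpha)$, which rewrites the weighted mass term. The key observation is that for any $u\in H^2_*(\Omega)\setminus\{0\}$ and any $\eta\in N_{\alpha,\beta}$,
\begin{equation*}
\|u\|^2_{H^2_*(\Omega)} + \lambda_{\alpha,\beta}(\beta-\alpha)\int_\Omega \eta\,u^2\,dx\,dy = \|u\|^2_{H^2_*(\Omega)} + \lambda_{\alpha,\beta}\int_\Omega(\beta-p_\eta)\,u^2\,dx\,dy,
\end{equation*}
so that, dividing by $\int_\Omega u^2\,dx\,dy$, the Rayleigh-type quotient in \eqref{ACP} equals
\begin{equation*}
\frac{\|u\|^2_{H^2_*(\Omega)} - \lambda_{\alpha,\beta}\int_\Omega p_\eta\,u^2\,dx\,dy}{\int_\Omega u^2\,dx\,dy} + \lambda_{\alpha,\beta}\beta.
\end{equation*}
This is the identity that drives everything.

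\emph{Upper bound.} Let $\overline p\in P_{\alpha,\beta}$ be an optimal weight for \eqref{CP} (which exists by Lemma \ref{exist}), and let $u_{\overline p}$ be a corresponding first eigenfunction, so that $\|u_{\overline p}\|^2_{H^2_*(\Omega)} = \lambda_1(\overline p)\int_\Omega \overline p\,u_{\overline p}^2 = \lambda_{\alpha,\beta}\int_\Omega\overline p\,u_{\overline p}^2$. Taking $\eta=\eta_{\overline p}\in N_{\alpha,\beta}$ and $u=u_{\overline p}$ in \eqref{ACP}, the bracketed numerator above vanishes by the eigenvalue equation, hence the quotient equals exactly $\lambda_{\alpha,\beta}\beta$; taking the infimum over $\eta$ and the minimum over $u$ can only decrease this, so $\Lambda_{\alpha,\beta}\le\lambda_{\alpha,\beta}\beta$.

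\emph{Lower bound.} Fix any $\eta\in N_{\alpha,\beta}$ and any $u\in H^2_*(\Omega)\setminus\{0\}$, and set $p=p_\eta\in P_{\alpha,\beta}$. By the variational characterization \eqref{lambdaP} of $\lambda_1(p)$ we have $\|u\|^2_{H^2_*(\Omega)}\ge \lambda_1(p)\int_\Omega p\,u^2\ge\lambda_{\alpha,\beta}\int_\Omega p\,u^2$, so the bracketed numerator in the identity above is nonnegative; therefore the quotient in \eqref{ACP} is $\ge\lambda_{\alpha,\beta}\beta$ for every admissible $\eta$ and $u$, and passing to the infimum gives $\Lambda_{\alpha,\beta}\ge\lambda_{\alpha,\beta}\beta$. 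Combining the two bounds yields the claim.

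\emph{Main obstacle.} There is no real analytic difficulty here: the proof is essentially the rewriting identity plus two applications of the definitions of $\lambda_{\alpha,\beta}$ and $\lambda_1(\cdot)$. The one point requiring a little care is making sure the infimum in \eqref{ACP} is actually attained at the pair $(\eta_{\overline p},u_{\overline p})$ built from an optimal pair of \eqref{CP} — i.e.\ that equality, not merely $\le$, holds — which is exactly what the upper-bound argument delivers; the remark in the text that \eqref{ACP} admits an optimal pair can also be invoked but is not strictly needed for the identity $\Lambda_{\alpha,\beta}=\lambda_{\alpha,\beta}\beta$ itself.
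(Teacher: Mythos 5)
Your proposal is correct and follows essentially the same two-step argument as the paper: the rewriting identity $\lambda_{\alpha,\beta}(\beta-\alpha)\int_\Omega \eta\,u^2 = \lambda_{\alpha,\beta}\beta\int_\Omega u^2-\lambda_{\alpha,\beta}\int_\Omega p_\eta\,u^2$, evaluated at the optimal pair $(\eta_{\overline p},u_{\overline p})$ for the upper bound and combined with $\lambda_{\alpha,\beta}\le \lambda_1(p_\eta)\le \|u\|_{H^2_*(\Omega)}^2/\int_\Omega p_\eta u^2$ for the lower bound. No gaps; this matches the paper's Steps 1 and 2.
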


\begin{proof}
We shall prove the lemma in two steps.

\medskip

{\bf Step 1 :} Let $\overline{p} \in P_{\alpha, \beta}$ and $u_{\overline{p}} \in H^2_*(\Omega)$ such that $\lambda_{\alpha, \beta}$ is achieved
for this optimal pair and  let $\eta_{\overline{p}} = \frac{\beta - \overline{p}}{\beta - \alpha} \in N_{\alpha, \beta}$.
Clearly we have

\begin{align*}
\Lambda_{\alpha, \beta}&  \leq \min_{u\in H^2_*(\Omega)\setminus \{0\}}  \, \dfrac{||u||_{H^2_*(\Omega)}^2 + \lambda_{\alpha, \beta} (\beta-\alpha)
\int_{\Omega} \eta_{\overline{p}} \,  u^2 \, dx\, dy}{\int_{\Omega} u^2 \, dx\, dy} \notag  \\[9pt]
& = \min_{u\in H^2_*(\Omega)\setminus \{0\}} \,  \dfrac{ ||u||_{H^2_*(\Omega)}^2 - \lambda_{\alpha, \beta} \int_{\Omega} \overline{p} \, u^2 \, dx\, dy
 + \lambda_{\alpha, \beta}
 \beta \int_{\Omega} u^2 \, dx\, dy }{\int_{\Omega} u^2 \, dx\, dy} \notag \\[9pt]
 & \leq \underbrace{ \frac{||u_{\overline{p}}||_{H^2_*(\Omega)}^2 -  \lambda_{\alpha,\beta}
 \int_{\Omega} \overline{p} \, u_{\overline{p}}^2 \, dx\, dy}{\int_{\Omega} u_{\overline{p}}^2 \, dx\, dy}}_{= 0}
 + \lambda_{\alpha, \beta} \beta=\lambda_{\alpha, \beta}\, \beta.
\end{align*}

\medskip

{\bf Step 2 :} Let now $\eta \in N_{\alpha, \beta}$ and $p_{\eta} \in P_{\alpha, \beta}$
with $\eta = \frac{\beta - p_{\eta}}{\beta - \alpha},$ i.e., $p_{\eta} = \beta - \eta (\beta - \alpha).$
 Then for any $u\in H^2_*(\Omega)\setminus \{0\}$

\begin{align}\label{estimate-2}
\frac{||u||_{H^2_*(\Omega)}^2 + \lambda_{\alpha, \beta} (\beta - \alpha) \int_{\Omega} \eta \,  u^2 \, dx\, dy}{\int_{\Omega} u^2 \, dx\, dy}
=\frac{ ||u||_{H^2_*(\Omega)}^2 - \lambda_{\alpha, \beta} \int_{\Omega} {p_{\eta}} \, u^2 \, dx\, dy +\lambda_{\alpha, \beta}
 \beta \int_{\Omega} u^2 \, dx\, dy }{\int_{\Omega} u^2 \, dx\, dy}.
\end{align}

Since, $p_{\eta} \in P_{\alpha, \beta}$ implies $\lambda_{\alpha, \beta} \leq \frac{||u||_{H^2_*(\Omega)}^2}{\int_{\Omega} p_{\eta}\, u^2 \, dx\, dy}$ for any
$u \in H^2_*(\Omega)\setminus \{0\}$
and $\eta \in N_{\alpha, \beta}$, passing to the infima,  \eqref{estimate-2} yields
$$
\Lambda_{\alpha, \beta} \geq \lambda_{\alpha, \beta}\, \beta.
$$
This completes the proof.
\end{proof}

Finally, we prove that the optimal pair of problem \eqref{ACP} can be characterised as follows

\begin{lemma}\label{Lambda}
Let $(\overline{\eta},\overline u) \in N_{\alpha, \beta} \times H^2_*(\Omega) $
be an optimal pair of problem \eqref{ACP}. Then, $\overline{\eta}$ and $\overline u$ are related as follows
$$
\overline{\eta}(x,y) = \chi_{S_{\overline u}} (x,y) \qquad \text{for a.e. } (x,y)\in \Omega \, ,
$$
where $\chi_{S_{\overline u}}$ is the characteristic function of a set $S_{\overline u}\subset \Omega$ such that
$|S_{\overline u}|=\frac{\beta-1}{\beta -\alpha}\,|\Omega|$
and
$$
S_{\overline u}=\{(x,y)\in \Omega\,: \overline u^2(x,y) \leq t \}
$$
for some $t> 0$.
\end{lemma}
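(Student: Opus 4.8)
The plan is to fix the optimal eigenfunction $\overline u$, use optimality to see that $\overline\eta$ minimises a \emph{linear} functional of $\eta$, and then recover $\overline\eta$ from the level sets of $\overline u^2$ by a ``bathtub'' (rearrangement) argument; the Euler--Lagrange equation for $\overline u$ will be used to show that these level sets are negligible. Since $(\overline\eta,\overline u)$ is an optimal pair of \eqref{ACP} and $\overline u$ is admissible in the inner minimisation for every $\eta\in N_{\alpha,\beta}$, one has
\[
\frac{\|\overline u\|_{H^2_*(\Omega)}^2+\lambda_{\alpha,\beta}(\beta-\alpha)\int_\Omega\overline\eta\,\overline u^2}{\int_\Omega\overline u^2}
=\Lambda_{\alpha,\beta}\le
\frac{\|\overline u\|_{H^2_*(\Omega)}^2+\lambda_{\alpha,\beta}(\beta-\alpha)\int_\Omega\eta\,\overline u^2}{\int_\Omega\overline u^2}\qquad\forall\,\eta\in N_{\alpha,\beta}\,,
\]
so that, since $\lambda_{\alpha,\beta}(\beta-\alpha)>0$, $\overline\eta$ minimises $\eta\mapsto\int_\Omega\eta\,\overline u^2$ over $N_{\alpha,\beta}$. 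Moreover $\overline u$ minimises the Rayleigh quotient in \eqref{ACP} with $\eta=\overline\eta$, hence it solves weakly $\Delta^2\overline u=\big(\Lambda_{\alpha,\beta}-\lambda_{\alpha,\beta}(\beta-\alpha)\overline\eta\big)\overline u$ in $\Omega$ with the boundary conditions of \eqref{weight}; as the potential lies in $L^\infty(\Omega)$, standard $L^p$ elliptic estimates give $\overline u\in W^{4,q}(\Omega)$ for all $q<\infty$, in particular $\overline u\in C^2(\overline\Omega)$.

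Next I would prove that $|\{(x,y)\in\Omega:\overline u^2(x,y)=t\}|=0$ for every $t>0$. If the set $A:=\{\overline u^2=t\}$ had positive measure, then $\overline u-\sqrt t$ would vanish a.e.\ on $A\cap\{\overline u>0\}$ and $\overline u+\sqrt t$ a.e.\ on $A\cap\{\overline u<0\}$; since the derivatives up to the fourth order of a $W^{4,q}$ function vanish a.e.\ on each of its level sets, we get $\Delta^2\overline u=0$ a.e.\ on $A$, and the equation together with $\overline u^2=t>0$ on $A$ forces
\[
\overline\eta=\frac{\Lambda_{\alpha,\beta}}{\lambda_{\alpha,\beta}(\beta-\alpha)}=\frac{\beta}{\beta-\alpha}>1\qquad\text{a.e.\ on }A\,,
\]
where we used Lemma \ref{thm-1}; this contradicts $\overline\eta\le1$. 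Likewise $|\{\overline u=0\}|=0$ by unique continuation for the bi--Laplacian. Hence $F(t):=|\{\overline u^2\le t\}|$ is non-decreasing and continuous on $(0,+\infty)$, with $F(0^+)=|\{\overline u=0\}|=0$ and $F(t)\to|\Omega|$ as $t\to+\infty$; since $0<\tfrac{\beta-1}{\beta-\alpha}|\Omega|<|\Omega|$, there is $t>0$ with $F(t)=\tfrac{\beta-1}{\beta-\alpha}|\Omega|$. Set $S_{\overline u}:=\{\overline u^2\le t\}$, so that $\chi_{S_{\overline u}}\in N_{\alpha,\beta}$.

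Finally comes the bathtub step. For every $\eta\in N_{\alpha,\beta}$, using $\overline u^2\le t$ on $S_{\overline u}$, $\overline u^2\ge t$ on $\Omega\setminus S_{\overline u}$, $\eta-1\le0$, $\eta\ge0$ and $\int_\Omega\eta=|S_{\overline u}|$, I would estimate
\[
\int_\Omega(\eta-\chi_{S_{\overline u}})\overline u^2=\int_{S_{\overline u}}(\eta-1)\overline u^2+\int_{\Omega\setminus S_{\overline u}}\eta\,\overline u^2
\ge t\Big(\int_{S_{\overline u}}(\eta-1)+\int_{\Omega\setminus S_{\overline u}}\eta\Big)=t\Big(\int_\Omega\eta-|S_{\overline u}|\Big)=0\,,
\]
so $\chi_{S_{\overline u}}$ is itself a minimiser of $\eta\mapsto\int_\Omega\eta\,\overline u^2$; since $\overline\eta$ is also a minimiser, taking $\eta=\overline\eta$ above turns all inequalities into equalities, which forces $(\overline\eta-1)(\overline u^2-t)=0$ a.e.\ on $S_{\overline u}$ and $\overline\eta\,(\overline u^2-t)=0$ a.e.\ on $\Omega\setminus S_{\overline u}$ (each integrand being a product of factors of constant sign). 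Recalling $|\{\overline u^2=t\}|=0$, this yields $\overline\eta=1$ a.e.\ on $S_{\overline u}$ and $\overline\eta=0$ a.e.\ on $\Omega\setminus S_{\overline u}$, i.e.\ $\overline\eta=\chi_{S_{\overline u}}$ a.e.\ in $\Omega$, with $|S_{\overline u}|=\tfrac{\beta-1}{\beta-\alpha}|\Omega|$ and $t>0$, as required.

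The step I expect to be the main obstacle is the negligibility of the level sets $\{\overline u^2=t\}$ (and of $\{\overline u=0\}$): without it, minimising the linear functional only gives the bang--bang profile up to a possibly non-characteristic constant value of $\overline\eta$ on a level set of positive measure, and excluding this genuinely uses the Euler--Lagrange equation, the identity $\Lambda_{\alpha,\beta}=\lambda_{\alpha,\beta}\beta$ and the constraint $\overline\eta\le1$, together with the (standard but somewhat delicate) fact that Sobolev derivatives vanish a.e.\ on level sets, plus a unique continuation property.
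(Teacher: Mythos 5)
Your overall strategy is sound and, except for one step, correct; it also differs from the paper's proof in a worthwhile way. The paper first shows that an auxiliary pair $(\chi_{S_{\overline u}},\overline u)$ is again optimal, writes the Euler--Lagrange equations for both pairs, and concludes from the identity $(\chi_{S_{\overline u}}-\overline\eta)\,\overline u=0$ together with a one-sided analysis on $\{\overline u^2<\overline t\}$; you instead work only with $(\overline\eta,\overline u)$, prove the stronger fact that \emph{every} level set $\{\overline u^2=t\}$ with $t>0$ is null (via the a.e.\ vanishing of Sobolev derivatives on level sets, the identity $\Lambda_{\alpha,\beta}=\lambda_{\alpha,\beta}\beta$ from Lemma \ref{thm-1}, and the constraint $\overline\eta\le 1$ --- this is legitimate, and it is essentially the same mechanism the paper uses in Step 3 of its proof), and then conclude by the bathtub equality analysis, which is carried out correctly.

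The genuine gap is the sentence ``$|\{\overline u=0\}|=0$ by unique continuation for the bi-Laplacian''. Unique continuation from sets of positive measure is classical for second order equations with bounded potentials, but for $\Delta^2 u=Vu$ with $V\in L^\infty$ it is not an off-the-shelf result; you give no argument or reference, and your level-set trick cannot supply it, since on $\{\overline u=0\}$ the equation yields no information. The step is really needed: without $|\{\overline u=0\}|<\frac{\beta-1}{\beta-\alpha}|\Omega|$ your intermediate-value argument may fail to produce $t>0$ with $|\{\overline u^2\le t\}|=\frac{\beta-1}{\beta-\alpha}|\Omega|$. Fortunately the gap can be closed with tools you already have, and this is exactly how the paper treats its degenerate case $\overline t=0$: if $|\{\overline u=0\}|\ge\frac{\beta-1}{\beta-\alpha}|\Omega|$, then $\eta_0:=\frac{\beta-1}{\beta-\alpha}\,\frac{|\Omega|}{|\{\overline u=0\}|}\,\chi_{\{\overline u=0\}}$ belongs to $N_{\alpha,\beta}$ and satisfies $\int_\Omega\eta_0\,\overline u^2\,dx\,dy=0$, so by the minimality of $\overline\eta$ for the linear functional one gets $\int_\Omega\overline\eta\,\overline u^2\,dx\,dy=0$, hence $\overline\eta\,\overline u=0$ a.e.\ and the Euler--Lagrange equation reduces to $\Delta^2\overline u=\Lambda_{\alpha,\beta}\,\overline u$ with the partially hinged conditions; then $\overline u$ must be one of the explicit eigenfunctions of Proposition \ref{eigenvalue}, whose zero sets are null, contradicting $|\{\overline u=0\}|>0$. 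With this replacement (which gives the weaker but sufficient bound $|\{\overline u=0\}|<\frac{\beta-1}{\beta-\alpha}|\Omega|$ in place of your unique continuation claim), your proof is complete.
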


\begin{proof} The proof is along the line of \cite[Proposition~3.3]{CV}. For the sake of completeness we shall
 outline the main ideas.

\textbf{ Step 1.}  Let $u \in H^2_*(\Omega)$ be such that $||u||_2 = 1$ and consider the functional $I : N_{\alpha, \beta} \rightarrow \mathbb{R}$
$$
I(\eta) := \int_{\Omega} \eta \, u^2 \, dx\, dy\,.
$$
We prove that the infimum problem
$$
I_{\alpha, \beta} := \inf_{\eta \in N_{\alpha, \beta}} I(\eta)
$$

admits a solution $\eta = \chi_{S_u},$ where $S_u \subset \Omega$ is such that $|S_u| = \frac{\beta -1}{\beta - \alpha} |\Omega|$ and satisfies one of the following
\begin{equation}\label{S}
S_u \subseteq \{(x,y)\in \Omega \, : u^2(x,y)=0 \} \quad \text{ or }\quad  \{(x,y)\in \Omega\,:u^2(x,y) < t \} \subseteq S_u \subseteq \{(x,y)\in \Omega\,: u^2(x,y)\leq t\},
\end{equation}
where $t$ is defined as
\begin{equation} \label{eq:def-t}
t:= \sup \left\{ s > 0 : |\{(x,y)\in \Omega\,: u^2(x,y) < s \}| < \frac{\beta -1}{\beta-\alpha} |\Omega| \right\}.
\end{equation}
Let $S_u \subset \Omega$ be as above, then $\chi_{S_u} \in N_{\alpha,\beta}$ and one obtains
$$
I_{\alpha, \beta} \leq I(\chi_{S_u}) = \int_{S_u} u^2 \, dx\, dy.
$$
On the other hand we claim that the following inequality holds
$$
I(\eta) \geq I(\chi_{S_u})   \qquad \text{for any } \eta\in N_{\alpha,\beta} \,.
$$
If this is true then one immediately obtain $I_{\alpha, \beta} = I (\chi_{S_u})$ and this concludes the proof of step 1.

We prove the validity of the claim by considering the cases $t>0$ and $t=0$ separately.

If $t>0$, we argue as follows
\begin{align} \label{eq:optimal}
& \int_{\Omega} u^2 (\chi_{S_u} - \eta) \, dx\, dy \\[8pt]
\notag & \qquad =\int_{\{ u^2 < t\}} u^{2} \, (\chi_{S_u}-\eta) \, dx\, dy + \int_{\{ u^2 > t\}} u^{2} \, (\chi_{S_u} - \eta) \, dx\, dy +
\int_{\{ u^2 = t\}} u^{2} \, (\chi_{S_u}-\eta) \, dx\, dy \\[8pt]
\notag & \qquad \leq t \int_{\{ u^2 < t\}} (\chi_{S_u} - \eta) \, dx\, dy - t \int_{\{ u^2 > t\}} \eta \, dx\, dy
+t \int_{\{ u^2 = t\}} \, (\chi_{S_u} - \eta) \, dx\, dy \\[8pt]
& \notag \qquad = t \int_{\Omega} (\chi_{S_u}-\eta) \, dx\, dy=0 \, .
\end{align}
If $t=0$ the proof follows with minor changes.

\textbf{ Step 2.} We prove that if $(\overline \eta, \overline u)$ is an optimal pair as in the statement of the lemma
and if $S_{\overline u}$ is the corresponding set defined according to
Step 1, then $(\chi_{S_{\overline u}},\overline u)$ is still an optimal pair.

Set
$$
S_{\alpha, \beta}:=\left\{S \subset \Omega\,:|S| = \frac{\beta -1}{\beta - \alpha} |\Omega|\right\} \, .
$$
Since $\{ \chi_S : S \in S_{\alpha, \beta} \} \subset N_{\alpha, \beta},$  we have
\begin{align*}
\Lambda_{\alpha,\beta}
\leq \inf_{S \in S_{\alpha, \beta}} \, \min_{u \in H^2_*(\Omega)\setminus \{0\}} \, \frac{||u||_{H^2_*(\Omega)}^2 + \lambda_{\alpha, \beta}(\beta - \alpha)
\int_{\Omega} \chi_{S} \, u^2 \, dx\, dy}{\int_{\Omega} u^2 \, dx\, dy} \, .
\end{align*}
On the other hand, letting $(\overline \eta,\overline u)$ an optimal pair as in the statement of the lemma, from Step 1 we have
$$
||\overline u||_{H^2_*(\Omega)}^2 +\lambda_{\alpha, \beta}(\beta - \alpha)  \int_{\Omega}  \, \overline{\eta}  \, \overline{u}^2 \, dx\, dy
\geq ||\overline u||_{H^2_*(\Omega)}^2 + \lambda_{\alpha, \beta}(\beta - \alpha) \int_{\Omega} \chi_{S_{\overline u}} \, \overline u^2 \, dx\, dy
$$
and therefore
\begin{align*}
\Lambda_{\alpha,\beta}&=\frac{||\overline u||_{H^2_*(\Omega)}^2 +\lambda_{\alpha, \beta}(\beta - \alpha)  \int_{\Omega}  \,
\overline \eta  \, \overline u^2 \, dx\, dy}
{\int_\Omega \overline u^2 \, dx\, dy}\ge
\frac{||\overline u||_{H^2_*(\Omega)}^2 + \lambda_{\alpha, \beta}(\beta - \alpha) \int_{\Omega} \chi_{S_{\overline u}} \, \overline u^2 \, dx\, dy}
{\int_\Omega \overline u^2 \, dx\, dy} \\[8pt]
& \ge\inf_{S \in S_{\alpha, \beta}} \, \min_{u \in H^2_*(\Omega)\setminus \{0\}}
\, \frac{||u||_{H^2_*(\Omega)}^2 + \lambda_{\alpha, \beta}(\beta - \alpha)  \int_{\Omega} \chi_{S} \, u^2 \, dx\, dy}{\int_{\Omega} u^2 \, dx\, dy} \, .
\end{align*}
This proves that
\begin{align*}
\Lambda_{\alpha,\beta}=
\inf_{S \in S_{\alpha, \beta}} \, \min_{u \in H^2_*(\Omega)\setminus \{0\}} \, \frac{||u||_{H^2_*(\Omega)}^2 + \lambda_{\alpha, \beta}(\beta - \alpha)
\int_{\Omega} \chi_{S} \, u^2 \, dx\, dy}{\int_{\Omega} u^2 \, dx\, dy}
\end{align*}
and in particular that $(\chi_{S_{\overline u}},\overline u)$ is an optimal pair.

\textbf{ Step 3.} Let $(\chi_{S_{\overline u}},\overline u)$ be the optimal pair introduced in Step 2 and let $\overline t$ be the number $t$ in \eqref{S}
corresponding to $\overline u$.
Let
$$
A_{\overline t}=\{ (x,y)\in \Omega\,: \overline u^2(x,y) = \overline t\} \, .
$$
We prove that $\overline t>0$ and that $|A_{\overline t}\setminus S_{\overline u}|=0$.

Suppose by contradiction that $\overline t=0$.
Since $\overline u \in H^4(\Omega)$ we can write the Euler-Lagrange equation related to  \eqref{ACP} almost everywhere and we have
$$
\Lambda_{\alpha, \beta} \, \overline u=\Delta^2 \overline u+ \lambda_{\alpha,\beta} (\beta-\alpha) \chi_{S_{\overline u}} \, \overline u=\Delta^2 \overline u \quad \text{a.e. in } \Omega \, . $$
Since $\overline u$ satisfies the partially hinged boundary conditions this means that it must be one of the eigenfunctions listed in Proposition \ref{eigenvalue} which is impossible since the set of zeroes of any of the eigenfunctions of Proposition \ref{eigenvalue} has zero measure thus contradicting the definition
of $S_{\overline u}$ which forces $S_{\overline u}$ to be a set of positive measure. This proves that $\overline t>0$.

Suppose now by contradiction that $|A_{\overline t}\setminus S_{\overline u}|>0$, we have that
$$
\Delta^2 \overline u+ \lambda_{\alpha, \beta} (\beta-\alpha) \chi_{S_{\overline u}} \, \overline u=\Lambda_{\alpha, \beta} \overline u\quad
\text{a.e. in } \Omega \, .
$$
Now, exploiting the fact that $\overline u$ is constant in $A_{\overline t}$ and $\overline t>0$, we infer
$$
\Lambda_{\alpha,\beta}=\lambda_{\alpha, \beta} (\beta-\alpha) \chi_{S_{\overline u}} \qquad \text{a.e. in } A_{\overline t} \, .
$$
and hence, since $\lambda_{\alpha, \beta} (\beta-\alpha) \chi_{S_{\overline u}}=0$ a.e. in $A_{\overline t}\setminus S_{\overline u}$ and
$|A_{\overline t}\setminus S_{\overline u}|>0$, we obtain $\Lambda_{\alpha,\beta}=0$ and this is absurd.

\textbf{Step 4.} We complete the proof of the lemma. First of all, we observe that by Step 3, it is not restrictive, up to a set of zero measure, to assume that
$A_{\overline t}\setminus S_{\overline u}=\emptyset$ in such way that $A_{\overline t}\subseteq S_{\overline u}$ and, in turn,
\begin{equation} \label{eq:Su}
S_{\overline u}=\{(x,y)\in \Omega: \overline u^2(x,y)\leq \overline t\} \, .
\end{equation}
It remains to prove that $\overline \eta=\chi_{S_{\overline u}}$ a.e. in $\Omega$. Since $(\overline \eta,\overline u)$ and $(\chi_{S_{\overline u}},\overline u)$
are both optimal pairs we have

\begin{align*}
& \Delta^2 \overline u+ \lambda_{\alpha, \beta} (\beta-\alpha) \chi_{S_{\overline u}} \, \overline u=\Lambda_{\alpha, \beta} \overline u \quad
\text{a.e. in } \Omega \, , \\[7pt]
& \Delta^2 \overline u+ \lambda_{\alpha, \beta} (\beta-\alpha) \overline \eta \, \overline u=\Lambda_{\alpha, \beta} \overline u \quad
\text{a.e. in } \Omega \, ,
\end{align*}

thus implying that
\begin{equation*}
(\chi_{S_{\overline u}}-\overline \eta) \, \overline u=0 \qquad \text{a.e. in } \Omega \, .
\end{equation*}
It is easy to check that $\overline \eta=\chi_{S_{\overline u}}$ a.e. in $\{(x,y)\in \Omega:\overline u^2(x,y)\ge \overline t\}$ being $\overline t>0$. In order to prove that
$\overline \eta=\chi_{S_{\overline u}}$ a.e. in $\{(x,y)\in \Omega:\overline u^2(x,y)<\overline t\}$, we apply \eqref{eq:optimal} to $\overline u$, $\chi_{S_{\overline u}}$
and $\overline \eta$ observing that the inequality \eqref{eq:optimal} is an equality being $(\overline \eta,\overline u)$ and $(\chi_{S_{\overline u}},\overline u)$ both
optimal pairs. In particular we have that
$$
\int_{\{\overline u^2<\overline t\}} \overline u^{2} \, (\chi_{S_{\overline u}}-\overline\eta) \, dx\, dy
=\overline t \int_{\{ \overline u^2<\overline t\}} (\chi_{S_{\overline u}}-\overline\eta) \, dx\, dy \, ,
$$
which implies
\begin{equation} \label{eq:=0}
\int_{\{\overline u^2<\overline t\}} (\overline u^{2}-\overline t) \, (\chi_{S_{\overline u}}-\overline\eta) \, dx\, dy=0\,.
\end{equation}
But the function $(\overline u^{2}-\overline t) \, (\chi_{S_{\overline u}}-\overline\eta)\le 0$ in $\{(x,y)\in \Omega:\overline u^2(x,y)<\overline t\}$, as one can deduce by
\eqref{eq:Su}, and hence by \eqref{eq:=0} we conclude that $\chi_{S_{\overline u}}=\overline\eta$ a.e. in the same set.

We have so proved that $\chi_{S_{\overline u}}-\overline\eta=0$ a.e. in $\Omega$ and this completes the proof of the lemma.
\end{proof}

{\bf Proof of Theorem \ref{thm-exist-qual} completed.}

The existence of an optimal pair $(\overline{p},\overline u) \in P_{\alpha, \beta} \times H^2_*(\Omega)$ follows
from Lemma \ref{exist}. If we put $\overline \eta:=\frac{\beta-\overline p}{\beta-\alpha}$ by Lemma \ref{thm-1} we deduce that $(\overline \eta,\overline u)$ is an optimal pair for
$\Lambda_{\alpha, \beta}=\lambda_{\alpha, \beta}\, \beta$. Moreover by Lemma \ref{Lambda} we also have that $\overline \eta=\chi_{S_{\overline u}}$ a.e. in $\Omega$
with $S_{\overline u}=\{(x,y)\in \Omega:\overline u^2(x,y)\le \overline t\}$ and
$\overline t$ as in \eqref{eq:def-t}. Hence we conclude that
$$
{\overline p}=\beta-{\overline \eta}(\beta-\alpha)=\alpha \chi_{S_{\overline u}}+\beta \chi_{S_{\overline u}^c} \, .
$$

\subsection{Proof of Proposition \ref{comparison_esy}.}
We prove the two statements separately.
\par\medskip \par
\textbf{Proof of Proposition \ref{comparison_esy}-(i).}
\par

We know that the function $u_1(x,y)=\varphi_1(y) \sin x$ introduced in \eqref{u1} is an eigenfunction corresponding to the least eigenvalue of \eq{weight}
with $p\equiv 1$.  Furthermore,
$$
\min_{u\in H^2_*(\Omega)\setminus \{0\}} \frac{\|u\|_{H^2_*(\Omega)}^2}{\|u\|_{2}^2}=\frac{\|u_1\|_{H^2_*(\Omega)}^2}{\|u_1\|_{2}^2}=\mu_{1,1} \, .
$$
Now, by exploiting the fact that $\varphi_1$ is even and increasing in $(0,\ell)$ and $p=p(y)$ is even, we deduce that
\begin{align*}
&\int_{\Omega} (1-p(y))\,u_1^2(x,y)\,dx\,dy=2 \int_0^{\pi} \int_{0}^{\ell}(1-p(y))\varphi_1^2(y)\, \sin^2 x\,dx\,dy\\
&\leq 2 \varphi_1^2(z) \int_0^{\pi} \int_{0}^{z}(1-p(y))\, \sin^2 x\, dx\,dy+2 \varphi_1^2(z) \int_0^{\pi} \int_{z}^{\ell}(1-p(y))\, \sin^2 x\,dx\,dy\\
&= \varphi_1^2(z) \,\pi \, \int_{0}^{\ell} (1-p(y))\,dy= 0\,,
\end{align*}
where in the last step we have exploited the fact that $\int_{\Omega} p(y)\,dx\,dy=|\Omega|$, therefore $ \int_{0}^{\ell}p(y)\,dy =\ell$.
Hence,
$$
\int_{\Omega} u_1^2(x,y)\,dxdy \leq \int_{\Omega}  p(y)  \,u_1^2(x,y)\,dxdy \, .
$$
From the above inequality we infer
$$
\mu_{1,1}= \frac{\|u_1\|_{H^2_*(\Omega)}^2}{\|u_1\|_{2}^2}\geq   \min_{u\in H^2_*(\Omega)\setminus \{0\}} \frac{\|u\|_{H^2_*(\Omega)}^2}{\|\sqrt{p}u\|_{2}^2}=\lambda_1(p) \, ,
$$
and the proof of the statement follows.
\par
\medskip
\par
\textbf{Proof of Proposition \ref{comparison_esy}-(ii).} \par

The idea of the proof is similar to that applied to prove statement $(i)$. By exploiting the fact that $\sin(\pi-x)=\sin(x)$ and $p(\pi-x)=p(x)$ for all $x\in (0, \frac{\pi}{2})$, we deduce that
\begin{align*}
&\int_{\Omega} (1-p(x))\,u_1^2(x,y)\,dx\,dy =2  \int_{-\ell}^{\ell} \int_0^{\frac{\pi}{2}}(1-p(x))\varphi_1^2(y)\, \sin^2 x\,dx\,dy\\
&\leq 2  \sin^2 (s) \int_{-\ell}^{\ell} \int_0^{s} (1-p(x))\, \varphi_1^2(y)\,dx\, \,dy+2  \sin^2 (s)  \int_{-\ell}^{\ell} \int_s^{\frac{\pi}{2}}(1-p(x))\,\varphi_1^2(y) \,dx\,dy\\
&= 2 \sin^2 (s) \, \left( \int_{-\ell}^{\ell} \varphi_1^2(y)\,dy\right) \, \left(\int_{0}^{\frac{\pi}{2}}(1-p(x))\,dx\right)= 0\,,
\end{align*}
where in the last step we have exploited the assumption $\int_{\Omega} p(x)\,dx\, dy=|\Omega|$, hence $\int_{0}^{\frac{\pi}{2}}p(x)\,dx=\frac{\pi}{2}$. From the above inequality the proof follows as for statement $(i)$.

\section{Proof of Theorem \ref{min}, Theorem \ref{signTH} and Theorem \ref{corPPP}} \label{spectrum}
In this section we restrict the admissible weights to the family $\overline P_{\alpha,\beta}$ defined in Theorem \ref{min}. Clearly, $\int_{0}^{\ell}p\,dy=\ell$ for all $p\in \overline P_{\alpha,\beta}$. Furthermore, for $m$ positive integer, $\langle \cdot ,\cdot \rangle_m$ will denote the scalar product in $H^2(-\ell, \ell)$ defined in \eqref{mprodotto} with equivalent norm  $\lv \phi\rv_{m}^2  =(\phi,\phi)_m$.

Let $u$ be an eigenfunction of \eq{eigenweak}, its Fourier expansion reads
$$
u(x,y)=\sum_{m=1}^{+\infty} \varphi_m(y)\sin(mx)
$$
with $\varphi_m \in C^2([-\ell,\ell])$ since $u\in H^4(\Omega)$ (at least).
Inserting $u$ in \eq{eigenweak}, we get that, for every $m\geq 1$ fixed, $\varphi_m$ satisfies the equation
\begin{equation}\label{weight1dweak}
\langle \varphi,\phi \rangle_m=\lambda\int_{-\ell}^{\ell} p(y) \varphi \phi \,dy \quad \text{for all }\phi \in H^2(-\ell, \ell)\,
\end{equation}
which is the weak formulation of the problem \eqref{weight1d}. Notice that, by elliptic regularity, any solution $\varphi \in H^2(-\ell, \ell)$ of  \eqref{weight1d} , lies in $H^4(-\ell, \ell)\subset C^3([-\ell, \ell])$.
Hence, the boundary conditions in \eq{weight1d} are satisfied pointwise. Since the bilinear form $\langle\varphi,\phi\rangle_m$ is continuous
and coercive the
eigenvalues of problem \eq{weight1dweak} may be ordered in an increasing sequence of strictly positive
numbers diverging to $+\infty$ and the corresponding eigenfunctions form a complete
system in $H^2(-\ell, \ell)$. Whence, for what remarked so far, when $p=p(y)$ there is a one to one correspondence between eigenvalues of \eq{weight1dweak} and eigenvalues
of \eq{eigenweak}.
In particular, if we denote by $\lambda_1(p)$ the first eigenvalue of \eq{eigenweak} and by $\overline\lambda_1(p,m)$ the first eigenvalue of \eq{weight1dweak} with $m\geq 1$ fixed,
namely
$$
\lambda_1(p):= \min_{u\in H^2_*(\Omega)\setminus \{0\}} \frac{\|u\|_{H^2_*(\Omega)}^2}{\|\sqrt{p}u\|_{2}^2}\quad \text{and} \quad \overline\lambda_1(p,m)
:=\min_{\varphi\in H^2(-\ell, \ell)\setminus \{0\}}\frac{\lv\varphi\rv_{m}^2}{\|\sqrt{p} \varphi\|_{2}^2} \, ,
$$
it is natural to conjecture that
$$
\lambda_1(p)=\min_{m\geq 1}\left\{\overline\lambda_1(p,m)\right\}=\overline\lambda_1(p,1) \, .
$$
Unfortunately, for $p\in \overline P_{\alpha,\beta}$ fixed, due to the negative terms in the norm $\lv\, \cdot\rv_{m}$,
the monotonicity of $m\mapsto \overline\lambda_1(p,m)$ is not easy to detect and we do not have a proof of the above equality for general $p$;
in Section \ref{numerics} we give some suggestions through numerical experiments. Nevertheless, we have the following partial result

\begin{lemma}\label{comp1}
If $p\in \overline P_{\alpha,\beta}$
then
$$
\overline\lambda_1(p,m)\leq \mu_{m,1}<m^4 \, ,
$$
where the $\mu_{m,1}$ are the numbers defined in Proposition \ref{eigenvalue}-(i).\par
\noindent
If furthermore $\beta\leq 16(1-\sigma^2) $, then
\begin{equation}\label{monot}
\overline\lambda_1(p,m) \geq \overline\lambda_1(p,1) \quad \text{ for all } m\geq 2 \,.
\end{equation}

\end{lemma}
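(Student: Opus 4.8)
The plan is to establish both statements by comparing the weighted Rayleigh quotient with the already-known unweighted $1$-dimensional spectrum $\{\mu_{m,k},\nu_{m,k}\}$ of Proposition \ref{eigenvalue}.

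For the upper bound $\overline\lambda_1(p,m)\le\mu_{m,1}$ I would use as competitor in the variational characterisation of $\overline\lambda_1(p,m)$ the function $\varphi_{m,1}$, namely the $y$-profile of the eigenfunction listed in Proposition \ref{eigenvalue}-$(i)$; it satisfies the boundary conditions in \eqref{weight1d} and $\lv\varphi_{m,1}\rv_m^2=\mu_{m,1}\|\varphi_{m,1}\|_2^2$. The two-sided bound $(1-\sigma^2)m^4<\mu_{m,1}<m^4$ guarantees that $\sqrt{m^2-\mu_{m,1}^{1/2}}$ is real and that both bracketed coefficients in the explicit formula for $\varphi_{m,1}$ are positive (using $\sqrt{1-\sigma^2}>1-\sigma$), so $\varphi_{m,1}$ is a positive combination of functions $\cosh(ay)$ with $a>0$, hence even, positive and strictly increasing on $(0,\ell)$. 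With this monotonicity the computation is exactly that in the proof of Proposition \ref{comparison_esy}-$(i)$: since $p\in\overline P_{\alpha,\beta}$ is even, lies below $1$ on $[0,z]$, above $1$ on $[z,\ell)$ and has $\int_0^\ell p\,dy=\ell$, one gets $\int_{-\ell}^{\ell}(p-1)\,\varphi_{m,1}^2\,dy\ge0$, i.e. $\|\sqrt{p}\,\varphi_{m,1}\|_2^2\ge\|\varphi_{m,1}\|_2^2$, whence $\overline\lambda_1(p,m)\le\lv\varphi_{m,1}\rv_m^2/\|\sqrt{p}\,\varphi_{m,1}\|_2^2\le\mu_{m,1}$. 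The strict inequality $\mu_{m,1}<m^4$ is part of Proposition \ref{eigenvalue}-$(i)$.

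For \eqref{monot}, I would first extract a crude lower bound from $p\le\beta$ a.e.: dividing $\lv\varphi\rv_m^2/\|\sqrt{p}\,\varphi\|_2^2\ge\lv\varphi\rv_m^2/(\beta\|\varphi\|_2^2)$ and minimising over $\varphi\in H^2(-\ell,\ell)\setminus\{0\}$ gives $\overline\lambda_1(p,m)\ge\mu_{m,1}/\beta$, since $\mu_{m,1}$ is the least eigenvalue of the unweighted $1$-dimensional problem (the remaining eigenvalues $\nu_{m,1},\mu_{m,k},\nu_{m,k}$ of Proposition \ref{eigenvalue} all exceed it). Then, for $m\ge2$, the hypothesis $\beta\le16(1-\sigma^2)$ together with Proposition \ref{eigenvalue}-$(i)$ yields
$$\mu_{m,1}>(1-\sigma^2)m^4\ge16(1-\sigma^2)\ge\beta>\beta\,\mu_{1,1}\,,$$
the last step because $\mu_{1,1}<1$; dividing by $\beta$ gives $\mu_{m,1}/\beta>\mu_{1,1}$. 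Combining this with the first part applied to $m=1$, which gives $\overline\lambda_1(p,1)\le\mu_{1,1}$, we conclude $\overline\lambda_1(p,m)\ge\mu_{m,1}/\beta>\mu_{1,1}\ge\overline\lambda_1(p,1)$.

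I do not expect a genuine obstacle: both ingredients are short Rayleigh-quotient comparisons, and the only slightly delicate point is the qualitative description (positivity and strict monotonicity) of $\varphi_{m,1}$, which however follows immediately from the explicit formula and the bounds already recorded in Proposition \ref{eigenvalue}. The specific constant $16(1-\sigma^2)=(1-\sigma^2)2^4$ is exactly what is needed to close the displayed chain of inequalities in the borderline case $m=2$; for $m\ge3$ the estimate is comfortably satisfied.
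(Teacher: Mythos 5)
Your proposal is correct and follows essentially the same route as the paper: the upper bound is obtained by testing the weighted Rayleigh quotient with the explicit (positive, even, increasing) profile $\varphi_{m,1}$ and repeating the rearrangement argument of Proposition \ref{comparison_esy}-(i), while \eqref{monot} comes from bounding $p\le\beta$ in the denominator and using $\mu_{m,1}>(1-\sigma^2)m^4\ge\beta$ for $m\ge2$ together with $\overline\lambda_1(p,1)\le\mu_{1,1}<1$. The only cosmetic difference is that the paper writes the lower bound as $\mu_{m,1}/\bigl((1-\sigma^2)m^4\bigr)>1$ instead of your $\mu_{m,1}/\beta>\mu_{1,1}$, which is the same comparison arranged slightly differently.
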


\begin{proof}
Let
$$\varphi_m(y):=\left[\big[\mu_{m,1}^{1/2}-(1-\sigma)m^2\big]\, \tfrac{\cosh\Big(y\sqrt{m^2+\mu_{m,1}^{1/2}}\Big)}{\cosh\Big(\ell\sqrt{m^2+\mu_{m,1}^{1/2}}\Big)}+
\big[\mu_{m,1}^{1/2}+(1-\sigma)m^2\big]\, \tfrac{\cosh\Big(y\sqrt{m^2-\mu_{m,1}^{1/2}}\Big)}{\cosh\Big(\ell\sqrt{m^2-\mu_{m,1}^{1/2}}\Big)}\right]\,,$$
From Proposition \ref{eigenvalue} it is readily deduced that $\varphi_m(y)$ is an eigenfunction corresponding to the least eigenvalue of \eq{weight1dweak} with $p\equiv 1$ and $m\geq 1$ fixed (otherwise we will find an eigenvalue of \eq{weight} not included in those listed in Proposition \ref{eigenvalue}).
Furthermore,
$$
\min_{\varphi\in H^2(-\ell, \ell)\setminus \{0\}}\frac{\lv\varphi\rv_{m}^2}{\|\varphi\|_{2}^2}
=\frac{\lv\varphi_m\rv_{m}^2}{\|\varphi_m\|_{2}^2}=\mu_{m,1} \, .
$$
Now, by exploiting the fact that $\varphi_m$ is even and increasing in $(0,\ell)$, the first part of the proof follows with the same
argument of Proposition \ref{comparison_esy}-(i), hence we omit it.
\par
Next we turn to the second estimate. Let $\varphi_{m,p}(y)$ be an eigenfunction corresponding to the least eigenvalue of \eq{weight1dweak}, with $m\geq  2$
fixed and with $p\in \overline P_{\alpha,\beta}$ satisfying the assumption of Lemma \ref{comp1}. In particular, $\varphi_{m,1}=\varphi_{m}$, with $\varphi_{m}$ as given above.
Since $p(y)\leq (1-\sigma^2)2^4$ for every $y\in(-\ell, \ell)$, we get
$$
\overline\lambda_1(p,m)=\frac{\lv\varphi_{m,p}\rv_{m}^2}{\|\sqrt{p} \varphi_{m,p}\|_{2}^2}\geq \frac{1}{(1-\sigma^2)m^4}
\frac{\lv\varphi_{m,p}\rv_{m}^2}{\| \varphi_{m,p}\|_{2}^2}\geq \frac{\mu_{m,1}}{(1-\sigma^2)m^4} \, .
$$
Then, the thesis follows by recalling that, from Proposition \ref{eigenvalue}-(i), $\mu_{m,1}\in ((1-\sigma^2) m^4, m^4)$ for every $m\geq 1$
and from the first part of the proof $\overline\lambda_1(p,1)<1$.
\end{proof}

Hence, under the assumptions of Lemma \ref{comp1}, we have
$$\lambda_1(p)=\overline\lambda_1(p,1)\leq \mu_{1,1}=\lambda_1(1)\,.$$
In particular, the weights considered in Lemma \ref{comp1} prove to be effective in lowering the first frequency of \eq{weight}, which is one of the main goal of
the present analysis. In the following we refine the result by carrying on a more deeper analysis. First we note that, from above, if $\varphi_{1,p}(y)$ is an eigenfunction
of $\overline\lambda_1(p,1)$, then $u_{1,p}(x,y):=\varphi_{1,p}(y)\sin (x)$ is an eigenfunction of $\lambda_1(p)$. Therefore, $\varphi_{1,p}(y)$ and $u_{1,p}(x,y)$
have the same sign.\par

We now discuss the sign of $\varphi_{1,p}(y)$ and the simplicity of $\lambda_1(p)$ in

\begin{lemma}\label{sign}
 Let $m\geq 1$  integer fixed and let $p\in \overline P_{\alpha,\beta}$. Then, the first eigenvalue $\overline\lambda_1(p,m)$ of problem \eq{weight1dweak}
 is simple and the first eigenfunction $\varphi_{m,p}(y)$ is of one sign in $[-\ell,\ell]$. \par \noindent Furthermore, if $\beta\leq 16(1-\sigma^2) $
 the same conclusion holds for the first eigenvalue $\lambda_1(p)$ of \eq{eigenweak}, namely it is simple and the corresponding eigenfunction is given by
 $u_{1,p}(x,y)=\varphi_{1,p}(y)\sin (x)$, hence of one sign in $\Omega$.
\end{lemma}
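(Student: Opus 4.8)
The plan is to settle the one–dimensional problem \eqref{weight1dweak} with $m\ge1$ fixed by exploiting the positivity preserving property of Theorem \ref{ppp} (which here plays the role of the maximum/comparison principles that are unavailable for $L_m$), and then to transfer the conclusion to \eqref{eigenweak} through the Fourier decomposition recalled at the beginning of this section together with the identity $\lambda_1(p)=\overline\lambda_1(p,1)$ proved above for $\beta\le16(1-\sigma^2)$. For the one–dimensional part, let $\varphi$ be a first eigenfunction of \eqref{weight1dweak}, i.e. a minimiser of the corresponding Rayleigh quotient, normalised by $\int_{-\ell}^{\ell}p\,\varphi^2\,dy=1$; by elliptic regularity $\varphi\in C^3([-\ell,\ell])$ and $L_m\varphi=\overline\lambda_1(p,m)\,p\,\varphi$ in the weak sense. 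To prove that $\varphi$ does not change sign I argue by contradiction. Write $\varphi=\varphi^+-\varphi^-$ with $\varphi^\pm\in L^2(-\ell,\ell)$ and, since $\varphi$ is assumed to change sign, $\varphi^\pm\not\equiv0$; denoting by $L_m^{-1}$ the inverse of $L_m$ under the boundary conditions of \eqref{linODE} (well defined, $\langle\cdot,\cdot\rangle_m$ being coercive), set $a:=L_m^{-1}(p\,\varphi^+)$, $b:=L_m^{-1}(p\,\varphi^-)$ and $\widetilde\varphi:=a+b=L_m^{-1}(p\,|\varphi|)$. By linearity $\varphi=\overline\lambda_1(p,m)(a-b)$, while Theorem \ref{ppp} gives $a>0$ and $b>0$ on all of $[-\ell,\ell]$, so that
$$
|\varphi|=\overline\lambda_1(p,m)\,|a-b|<\overline\lambda_1(p,m)\,(a+b)=\overline\lambda_1(p,m)\,\widetilde\varphi \qquad\text{everywhere on }[-\ell,\ell],
$$
because $a+b-|a-b|=2\min\{a,b\}>0$. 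Testing the identity $\langle\widetilde\varphi,\phi\rangle_m=\int_{-\ell}^{\ell}p\,|\varphi|\,\phi\,dy$ with $\phi=\widetilde\varphi$ and using the last inequality yields
$$
\lv\widetilde\varphi\rv_m^2=\int_{-\ell}^{\ell}p\,|\varphi|\,\widetilde\varphi\,dy<\overline\lambda_1(p,m)\int_{-\ell}^{\ell}p\,\widetilde\varphi^{\,2}\,dy=\overline\lambda_1(p,m)\,\|\sqrt p\,\widetilde\varphi\|_2^2,
$$
hence the Rayleigh quotient of $\widetilde\varphi$ is strictly smaller than $\overline\lambda_1(p,m)$, a contradiction. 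Thus $\varphi$ is of one sign; up to a sign we take $\varphi\ge0$, and then $L_m\varphi=\overline\lambda_1(p,m)\,p\,\varphi$ has nonnegative, nontrivial right–hand side, so Theorem \ref{ppp} gives $\varphi_{m,p}=\varphi>0$ in $[-\ell,\ell]$.

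Simplicity of $\overline\lambda_1(p,m)$ then follows by a standard argument: if $\varphi,\psi$ are two first eigenfunctions, by the previous step both can be taken strictly positive on the compact interval $[-\ell,\ell]$, hence $c:=\min_{[-\ell,\ell]}(\varphi/\psi)>0$ is attained and $\varphi-c\,\psi\ge0$ vanishes at some point; were $\varphi-c\,\psi\not\equiv0$, it would be a first eigenfunction with nonnegative nontrivial data $\overline\lambda_1(p,m)\,p\,(\varphi-c\psi)$, whence $\varphi-c\psi>0$ everywhere by Theorem \ref{ppp}, a contradiction. Therefore $\varphi=c\,\psi$ and the eigenvalue is simple. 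The delicate point of the whole proof is the one–sign step: since $|\varphi|\notin H^2(-\ell,\ell)$ one cannot simply replace $\varphi$ by $|\varphi|$ in the Rayleigh quotient, and the role of Theorem \ref{ppp} is precisely to produce, through $\widetilde\varphi=L_m^{-1}(p|\varphi|)$, a legitimate $H^2$ competitor that is strictly larger than $|\varphi|$; everything else is elementary. (Equivalently, one may invoke a Krein--Rutman type theorem for the strongly positive compact operator $g\mapsto L_m^{-1}(p\,g)$ on $C([-\ell,\ell])$.)

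It remains to transfer these facts to \eqref{eigenweak} under the assumption $\beta\le16(1-\sigma^2)$. By Lemma \ref{comp1} we have $\overline\lambda_1(p,1)\le\mu_{1,1}<1$, whereas for $m\ge2$ the estimate used in the proof of Lemma \ref{comp1} gives $\overline\lambda_1(p,m)\ge\mu_{m,1}/\big((1-\sigma^2)m^4\big)>1$; since moreover $\overline\lambda_k(p,m)\ge\overline\lambda_1(p,m)$ for all $k$, the value $\overline\lambda_1(p,1)$ is not an eigenvalue of \eqref{weight1dweak} for any $m\ge2$, nor does it equal $\overline\lambda_k(p,1)$ for $k\ge2$. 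As recalled at the beginning of the section, when $p=p(y)$ every eigenfunction $u$ of \eqref{eigenweak} expands as $u(x,y)=\sum_{m\ge1}\varphi_m(y)\sin(mx)$ with each nonzero $\varphi_m$ an eigenfunction of \eqref{weight1dweak} sharing the same eigenvalue, and $\lambda_1(p)=\overline\lambda_1(p,1)$. Hence in the expansion of any eigenfunction of $\lambda_1(p)$ all coefficients $\varphi_m$ with $m\ge2$ vanish, while $\varphi_1$ is, by the simplicity just established, a scalar multiple of the one–signed function $\varphi_{1,p}$. Consequently $\lambda_1(p)$ is simple, its eigenfunction is $u_{1,p}(x,y)=\varphi_{1,p}(y)\sin x$, and $u_{1,p}$ has constant sign in $\Omega$ because $\varphi_{1,p}$ has constant sign on $[-\ell,\ell]$ and $\sin x>0$ on $(0,\pi)$.
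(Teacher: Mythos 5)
Your proof is correct, but the key one--sign step takes a genuinely different route from the paper. The paper proves Lemma \ref{sign} via the decomposition with respect to dual cones in $H^2(-\ell,\ell)$ for the scalar product $\langle\cdot,\cdot\rangle_m$: a sign-changing minimizer is written as $\varphi_{m,p}=\chi_{m,p}+\psi_{m,p}$ with $\chi_{m,p}$ in the positive cone and $\psi_{m,p}$ in its dual cone, Corollary \ref{pppweak} (itself a consequence of Theorem \ref{ppp}) forces $\psi_{m,p}<0$, and the orthogonal competitor $\chi_{m,p}-\psi_{m,p}$ strictly lowers the Rayleigh quotient; simplicity is then obtained by producing a sign-changing minimizer $\varphi_{m,p}+t\bar\varphi_{m,p}$ with $t<0$ from two independent positive ones. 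You instead use Theorem \ref{ppp} directly through the inverse operator: the competitor $\widetilde\varphi=L_m^{-1}(p|\varphi|)$ lies in $H^2(-\ell,\ell)$ (coercivity of $\langle\cdot,\cdot\rangle_m$ justifies inverting $L_m$), it strictly dominates $|\varphi|/\overline\lambda_1(p,m)$ because $a=L_m^{-1}(p\varphi^+)$ and $b=L_m^{-1}(p\varphi^-)$ are both strictly positive, and testing with $\widetilde\varphi$ yields a strictly smaller quotient; your simplicity argument is the Krein--Rutman-type one based on $c=\min_{[-\ell,\ell]}(\varphi/\psi)$, which works precisely because Theorem \ref{ppp} gives positivity up to the endpoints $\pm\ell$. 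Both routes rest ultimately on Theorem \ref{ppp}, but yours bypasses the Moreau/dual-cone machinery and Corollary \ref{pppweak} altogether, at the modest price of introducing $L_m^{-1}$ explicitly. For the two-dimensional part you follow the same scheme as the paper (Fourier decomposition plus Lemma \ref{comp1}), and you are in fact slightly more explicit: you note that the proof of Lemma \ref{comp1} gives the strict separation $\overline\lambda_1(p,m)\geq\mu_{m,1}/\big((1-\sigma^2)m^4\big)>1$ for $m\geq2$ while $\overline\lambda_1(p,1)\leq\mu_{1,1}<1$, which is exactly what is needed to exclude the modes $m\geq2$ from the expansion of a first eigenfunction and to transfer simplicity, a point the paper leaves implicit since \eqref{monot} is stated only as a non-strict inequality.
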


\begin{proof}
	We apply the decomposition with respect to dual cones technique,
	see \cite[Chapter 3]{book} suitably combined with Theorem
	\ref{ppp}.
Let $\mathcal{K}=\{\varphi\in H^2(-\ell,\ell):\varphi\geq 0 \text{ in }(-\ell,
\ell)\}$  and let $\mathcal{K}^*$ be its dual cone,
namely
$$
\mathcal{K}^*:=\{\psi\in H^2(-\ell,\ell):\langle \psi,\phi\rangle_m\leq 0\quad \text{ for
all }\phi \in \mathcal{K}\} \,,
$$
where $\langle \cdot,\cdot\rangle_m$ is defined in \eqref{mprodotto}.
Then, for any $\varphi\in H^2(-\ell,\ell)$
there exists a unique $(\chi, \psi)\in \mathcal{K} \times
\mathcal{K}^*$ such that
$$
\varphi=\chi+\psi\,, \quad \langle \chi, \psi\rangle_m=0 \,.
$$
Now we know that
$$
\overline\lambda_1(p,m)=\min_{\varphi\in H^2(-\ell, \ell)\setminus \{0\}}\frac{\lv\varphi\rv_{m}^2}{\|\sqrt{p} \varphi\|_{2}^2}
=\frac{\lv\varphi_{m,p}\rv_{m}^2}{\|\sqrt{p} \varphi_{m,p}\|_{2}^2} \, .
$$
For contradiction, assume that $\varphi_{m,p}$ changes sign. Then,
we may decompose $\varphi_{m,p}=\chi_{m,p}+\psi_{m,p}$ with
$\chi_{m,p} \in \mathcal{K}\setminus\{0\}$ and $\psi_{m,p} \in
\mathcal{K}^* \setminus\{0\}$.

In the remaining part of this proof we need some results on a positivity preserving property
which is treated in Section \ref{pppproof}.

From Corollary \ref{pppweak}, we deduce that $\psi_{m,p}<0$ in $(-\ell, \ell)$.
Then, replacing $\varphi_{m,p}$ with $\chi_{m,p}-\psi_{m,p}$, exploiting the fact that
$\chi_{m,p}-\psi_{m,p}>\varphi_{m,p}$ in $(-\ell, \ell)$ and
the orthogonality of $\chi_{m,p}$ and $\psi_{m,p}$ in $H^2(-\ell,\ell)$, we infer

$$
\frac{\lv\chi_{m,p}-\psi_{m,p}\rv_{m}^2}{\|\sqrt{p}
(\chi_{m,p}-\psi_{m,p})\|_{2}^2}
<\frac{\lv\varphi_{m,p}\rv_{m}^2}{\|\sqrt{p}
\varphi_{m,p}\|_{2}^2}\,,$$ a contradiction. Hence
$\varphi_{m,p}\geq 0$ in $(-\ell, \ell)$ and since $\varphi_{m,p}$
solves \eq{weight1dweak}, by Theorem \ref{ppp} with
$f=\overline\lambda_1(p,m)\, p(y)\, \varphi_{m,p}$, we conclude
that $\varphi_{m,p}> 0$ in $[-\ell, \ell]$.\par The
simplicity follows by noting that if $\varphi_{m,p}$ and $\bar
\varphi_{m,p}$ are two  linearly independent  positive minimizers, then
$\varphi_{m,p}+t \bar \varphi_{m,p}$ is a sign-changing minimizer
for some $t<0$ suitably chosen, a contradiction.
\end{proof}

\par

Next we focus on the sign of $\varphi_{1,p}'(y)$ and we prove

\begin{lemma}\label{convex}
If $p\in \overline P_{\alpha,\beta}$ is such that $\beta<1/\mu_{1,1}$
and if $\varphi_{1,p}$ is a positive eigenfunction of \eq{weight1dweak} with $m=1$ corresponding to the first eigenvalue $\overline \lambda_1(p,1)$, then
$\varphi_{1,p}$ is increasing in $(0,\ell)$.
\end{lemma}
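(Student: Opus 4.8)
The plan is to reduce the fourth-order eigenvalue equation satisfied by $\varphi:=\varphi_{1,p}$ to a \emph{second}-order equation for its derivative $w:=\varphi_{1,p}'$ whose right-hand side has a definite sign, and then to apply the elementary maximum principle for the operator $-\frac{d^2}{dy^2}+2$ on $(0,\ell)$.

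Write $\lambda:=\overline\lambda_1(p,1)$, so that $\varphi$ solves \eqref{weight1d} with $m=1$. Since $p$ is even and, by Lemma \ref{sign}, $\overline\lambda_1(p,1)$ is simple, the positive eigenfunction $\varphi$ must agree with $y\mapsto\varphi(-y)$ (both are first eigenfunctions, and $\varphi(0)>0$), i.e. $\varphi$ is even; as $\varphi\in H^4(-\ell,\ell)\subset C^3([-\ell,\ell])$ this forces $\varphi'(0)=\varphi'''(0)=0$. The crucial observation — and the only point where the hypothesis $\beta<1/\mu_{1,1}$ is used — is that, by Lemma \ref{comp1} in the form $\lambda=\overline\lambda_1(p,1)\le\mu_{1,1}$,
$$\lambda\,p(y)\le\lambda\,\beta\le\mu_{1,1}\,\beta<1\qquad\text{for a.e. }y\in(-\ell,\ell)\,,$$
so that $(\lambda p(y)-1)\,\varphi(y)<0$ for $y\in(0,\ell)$ (recall $\varphi>0$ on $[-\ell,\ell]$ by Lemma \ref{sign}).

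Next I would integrate the identity $\varphi''''=2\varphi''+(\lambda p-1)\varphi$ over $(0,y)$; using $\varphi'(0)=\varphi'''(0)=0$ and setting $w:=\varphi'$ this yields, pointwise on $[0,\ell]$,
$$w''(y)-2w(y)=g(y):=\int_0^y\bigl(\lambda p(t)-1\bigr)\varphi(t)\,dt\,.$$
By the previous paragraph $g(0)=0$ and $g$ is strictly decreasing, hence $g<0$ on $(0,\ell]$. Moreover $w(0)=\varphi'(0)=0$, and evaluating the integrated identity at $y=\ell$ together with the boundary condition $\varphi'''(\ell)=(2-\sigma)\varphi'(\ell)$ gives $-\sigma\,w(\ell)=g(\ell)<0$, that is $w(\ell)>0$.

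Finally I would conclude by a maximum-principle argument: since $-w''+2w=-g>0$ on $(0,\ell)$ and $w(0)=0\le w(\ell)$, $w$ cannot attain a negative minimum at an interior point, so $w\ge0$ on $[0,\ell]$; and if $w(y_1)=0$ for some $y_1\in(0,\ell)$ then $y_1$ is an interior minimum of $w$, whence $w''(y_1)\ge0$ and therefore $-g(y_1)=-w''(y_1)+2w(y_1)\le0$, contradicting $g(y_1)<0$. Thus $\varphi_{1,p}'=w>0$ in $(0,\ell)$, i.e. $\varphi_{1,p}$ is (strictly) increasing there. The only genuinely delicate point is the sign bookkeeping above: the bound $\beta<1/\mu_{1,1}$ is precisely what makes $\lambda p-1<0$, hence $g$ sign-definite, and also what produces $w(\ell)>0$ through the third-order boundary condition; once this is in place the reduction to second order and the maximum principle are routine. (A minor technical check: as $p$ is only piecewise continuous one still has $\varphi''''\in L^\infty(-\ell,\ell)$, so $\varphi'''$ is absolutely continuous and the integration leading to the equation for $w$ is legitimate, with $w\in C^2([-\ell,\ell])$ since $\varphi\in C^3([-\ell,\ell])$.)
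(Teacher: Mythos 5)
Your proof is correct, and it takes a genuinely different route from the paper's. Both arguments start from the same ingredients: evenness of $\varphi_{1,p}$ (hence $\varphi_{1,p}'(0)=\varphi_{1,p}'''(0)=0$), positivity and simplicity from Lemma \ref{sign}, and the sign information $\overline\lambda_1(p,1)\,p(y)\le\mu_{1,1}\beta<1$ coming from Lemma \ref{comp1} together with $\beta<1/\mu_{1,1}$, which makes the right-hand side of $\varphi''''-2\varphi''=(\lambda p-1)\varphi$ strictly negative. From there the paper stays at the level of the fourth-order inequality and runs a case analysis on the signs of $\varphi''$ and $\varphi'''$ (Rolle-type arguments, using the second-order boundary condition $\varphi''(\ell)=\sigma\varphi(\ell)>0$, with extra bookkeeping at the jump points of $p$) to first exclude interior zeros of $\varphi'$ and then exclude $\varphi'<0$. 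You instead integrate once from $0$, which turns the problem into the second-order relation $w''-2w=g$ for $w=\varphi'$ with $g<0$ on $(0,\ell]$, extract $w(\ell)=-g(\ell)/\sigma>0$ from the third-order boundary condition, and conclude by the elementary maximum principle for $-\tfrac{d^2}{dy^2}+2$; your technical remark that $\varphi'''$ is absolutely continuous (so the integration is legitimate even for piecewise continuous $p$, and $w\in C^2$) is exactly the point that needs checking, and it holds since $\varphi\in H^4(-\ell,\ell)\subset C^3([-\ell,\ell])$. Your reduction is shorter, avoids the multi-case sign analysis and the separate discussion of discontinuity points, and directly yields strict monotonicity; the paper's argument, by contrast, uses only pointwise ODE reasoning and the other boundary condition, at the cost of more case-splitting. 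Either way the conclusion $\varphi_{1,p}'>0$ in $(0,\ell)$ is the same.
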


\begin{proof}
For shortness we will write $\varphi_1$ instead of $\varphi_{1,p}$. Since $p$ is even, being $\varphi_1$ positive, we infer that it is an even function.
Hence, since $\varphi_1\in C^3([-\ell, \ell])$ it satisfies $\varphi_1'(0)=0=\varphi_1'''(0)$.  \par
 If $p$ is continuous, then $\varphi_{1}\in C^4([-\ell, \ell])$ and it satisfies the equation in \eq{weight1d} pointwise.
 We recall that the boundary conditions in \eq{weight1d} are satisfied pointwise also when $p$ is not continuous.
 Since $\varphi_1$ is positive, $\beta<1/\mu_{1,1}$ and, by Lemma \ref{comp1}, we know that $\overline \lambda_1(p,1)\leq \mu_{1,1}$, from the equation we infer
\begin{equation}\label{crucial}
\varphi_{1}''''(y)-2\varphi_{1}''(y)=( \overline \lambda_1(p,1)  p(y)-1)\varphi_{1}(y)\leq (\mu_{1,1} p(y)-1)\varphi_{1}(y)<0\quad \text{ in }(-\ell, \ell)
\end{equation}
If $p$ is not continuous, since only a finite number of points of
jump discontinuity are allowed in $(-\ell,\ell)$, say
$\{\tau_j\}_{j=1}^r$ for some integer $r$, the above inequality
holds in each interval $(\tau_j, \tau_{j+1})$. Furthermore, for
any $j=1,...,r$, the right and left fourth order derivative at
$\tau_j$ exists and they are given by
$(\varphi_1)''''_{\pm}(\tau_j)=\lim_{y\rightarrow
\tau_j^{\pm}}\varphi_{1}''''(y)$.\par

First we show that
\begin{equation}\label{der-sign}
\varphi_{1}' \text{ never vanishes in }(0, \ell)\,.
\end{equation}

By contradiction, let $y_1\in (0,\ell)$ be such that $\varphi_{1}'(y_1)=0$. Since $\varphi_{1}'(0)=0$ and $\varphi_1\in C^3([-\ell, \ell])$,
there exists $y_0\in (0, y_1)$ such that $\varphi_{1}''(y_0)=0$ and, by \eq{crucial}, $(\varphi_1)''''_{+}(y_0)<0$. Next the following two cases may occur.

$\bullet $ CASE 1: $\varphi_{1}'''(y_0)\leq 0$. From above, $\varphi_{1}'''$ is negative and, in turn, also $\varphi_{1}''$ is negative in a right neighborhood of $y_0$.
Since the boundary conditions in \eq{weight1d} yield $\varphi_{1}''(\ell)=\sigma \varphi_{1}(\ell)>0$, we infer that there exists $y_2>y_0$ such that
$\varphi_{1}''(y_2)=0$, $\varphi_{1}'''(y_2) \ge  0$ and $\varphi_{1}''(y)\leq 0$ in $(y_0,y_2)$. Whence, by \eq{crucial}, $\varphi_{1}''''(y)< 0$ in $(y_0,y_2)$
or in each of the subintervals $(\tau_j, \tau_{j+1})$ contained in $(y_0,y_2)$. Since $\varphi_{1}'''$ is continuous in $[y_0,y_2]$, in any case,
we have that it is strictly decreasing in $[y_0,y_2]$, hence $\varphi_{1}'''(y)< 0$ in  $(y_0,y_2]$ in contradiction with $\varphi_{1}'''(y_2)\ge 0$.

\medskip

$\bullet $ CASE 2: $\varphi_{1}'''(y_0)>0$. We distinguish two further cases.

\medskip

CASE 2a: $\varphi_{1}''(0)\leq 0$. By \eq{crucial}, $(\varphi_1)''''_{+}(0) <  0$, hence $\varphi_{1}'''(y)<0$ in a right neighborhood of $0$.
Then, since $\varphi_{1}'''(y_0)>0$, there exists $y_3\in (0,y_0)$ such that $\varphi_{1}'''(y)<0$ in $(0,y_3)$ and $\varphi_{1}'''(y_3)=0$. In turn, $\varphi_{1}''< 0$ in $(0,y_3)$ and by \eq{crucial} $\varphi_{1}''''(y)<0$ in $(0,y_3)$ (or in each of the subintervals $(\tau_j, \tau_{j+1})$ contained in $(y_0,y_3)$). Since $\varphi_{1}'''$ is continuous this implies that it is strictly decreasing in $[0,y_3]$. Since $\varphi_{1}'''(0)=0$, we infer $\varphi_{1}'''(y_3)<0$, a contradiction.

CASE 2b: $\varphi_{1}''(0)> 0$. From $\varphi_{1}'''(y_0)>0$ and $\varphi_{1}''(y_0)=0$ we infer that $\varphi_{1}''$ is negative in a left neighborhood of $y_0$. Then,  since  $\varphi_{1}''(0)> 0$, there exists $y_4\in (0, y_0)$ such that $\varphi_{1}''(y) >  0$ in $(0,y_4)$ and $\varphi_{1}''(y_4)=0$. In turn, recalling that $\varphi_{1}''(y_0)=0$, there exists $y_5\in (y_4,y_0)$ such that $\varphi_{1}'''(y_5)= 0$ and, by \eq{crucial}, we infer that $\varphi_{1}'''(y)< 0$ in $(y_5,y_0)$, in contradiction with $\varphi_{1}'''(y_0)>0$.

\medskip

Next we come back to the proof of the statement. By \eq{der-sign} we know that either $\varphi_{1}'(y)<0$ in $(0, \ell)$ or $\varphi_{1}'(y)>0$ in $(0, \ell)$.

Assume that $\varphi_{1}'(y)<0$ in $(0, \ell)$, then $\varphi_{1}''(0)\leq 0$. Indeed, if $\varphi_{1}''(0)>0$, since $\varphi_{1}'(0)=0$, then $\varphi_{1}'$ is positive in a right neighborhood of $0$, a contradiction. From $\varphi_{1}''(0)\leq 0$, together with \eq{crucial} and $\varphi_{1}'''(0)=0$, it follows that $\varphi_{1}'''$ is negative in a right neighborhood of $0$ and, in turn, also $\varphi_{1}''$ is negative in a right neighborhood of $0$. Since, from the boundary conditions $\varphi_{1}''(\ell)=\sigma \varphi_{1}(\ell)>0$, we deduce that there exists $\overline y\in(0, \ell)$ such that
$\varphi_{1}''(\overline y)=0$, $\varphi_{1}'''(\overline y) \ge  0$ and $\varphi_{1}''(y)\leq 0$ in $(0,\overline y)$\,. But then, from \eq{crucial}, $\varphi_{1}'''$ is strictly decreasing in $[0,\overline y]$ and,  recalling that $\varphi_1'''(0)=0$  we reach a contradiction.
\end{proof}

All the above statements yield the proof of Theorem \ref{min}.

\medskip

\textbf{Proof of Theorem \ref{min} completed.}

The key point is to note that, by Lemma \ref{convex}, we have

\begin{equation}\label{claim}
p\in \overline P_{\alpha,\beta} \quad \Rightarrow \quad \varphi_{1,p} \text{ increasing in } (0, \ell).
\end{equation}

Indeed, since by \eq{claim} $\varphi_{1,p_2}$ is increasing in $(0, \ell)$, to prove $(i)$ we may argue as in the proof of the first part of Lemma \ref{comp1} with $\varphi_{1,p_2}$ instead of $\varphi_{m}$. In particular, we readily infer that $\overline \lambda_1(p_1,1)\leq \overline \lambda_1(p_2,1)$ and since, from Lemma \ref{comp1}, $\overline \lambda_1(p,1)=\lambda_1(p)$ for all $p\in  \overline P_{\alpha,\beta}$, the proof of $(i)$ follows.\par
Next we prove $(ii)$. Set $\overline y:=\frac{\ell(\beta-1)}{\beta-\alpha}$, for every   $p\in P_{\alpha,\beta}$  there holds
$$p(y)\geq \overline p(y) \quad \text{in } [0,\overline y]\quad \text{ and }\quad p(y)\leq \overline p(y) \quad \text{in } [\overline y,\ell)\,.$$
Then, we may argue again as in the proof of the first part of Lemma \ref{comp1} with $\varphi_{1,p}$ instead of $\varphi_{m}$ and conclude
$$\overline\lambda_1( p,1)\geq\overline\lambda_1(\overline p,1)\,.$$
Once more, from Lemma \ref{comp1}, $\overline \lambda_1(p,1)=\lambda_1(p)$ for all $p\in  \overline P_{\alpha,\beta} $ and the statement of
Theorem  \ref{min} follows.
\par\medskip\par
\textbf{Proof of Theorem \ref{signTH}.}\par
The proof readily follows by combining the statements of Lemma \ref{sign} and Lemma \ref{convex}.

\par\medskip\par
\textbf{Proof of Theorem \ref{corPPP}.}\par
The proof readily follows as a corollary of Theorem \ref{ppp} by exploiting the same separation of variables performed in the Proof of Theorem \ref{min}.

\par\bigskip\noindent
\textbf{Acknowledgments.} The authors are grateful to the anonymous referee whose relevant comments and suggestions helped in improving the exposition of the preliminary form of the manuscript. The first three authors are members of the Gruppo Nazionale per l'Analisi Matematica, la Probabilit\`a
e le loro Applicazioni (GNAMPA) of the Istituto Nazionale di Alta Matematica (INdAM) and are partially supported by the INDAM-GNAMPA 2019 grant ``Analisi spettrale per operatori ellittici con condizioni di Steklov o parzialmente incernierate'' and by the PRIN project ``Direct and inverse problems for partial differential equations: theoretical aspects and applications'' (Italy). The third author was partially supported by the INDAM-GNAMPA 2018 grant ``Formula di monotonia e applicazioni: problemi frazionari e stabilit\`a spettrale rispetto a perturbazioni del dominio'' and by the research project ``Metodi analitici, numerici e di simulazione per lo studio di equazioni differenziali a derivate parziali e applicazioni” Progetto di Ateneo 2016, Universit\`a del Piemonte Orientale ``Amedeo Avogadro''. The research of the fourth author is partially supported by an INSPIRE faculty fellowship (India).


\begin{thebibliography}{10}

\bibitem{anedda1} C.~Anedda, F.~Cuccu, {\em Steiner symmetry in the minimization of the first eigenvalue in problems involving the p-Laplacian}, Proc. Amer. Math. Soc. 144, (2016), 3431--3440.

\bibitem{anedda2} C.~Anedda, F.~Cuccu, G.~Porru, {\em Minimization of the first eigenvalue in problems involving the bi-laplacian}, Rev. Mate. Teor. Appl. 16, (2009), 127--136.

\bibitem{banerjee} J.~R.~Banerjee, {\em A simplified method for the free vibration and flutter analysis of bridge decks}, J. Sound Vibration 260 (2003), 829--845.


\bibitem{bebuga2} E.~Berchio, D.~Buoso, F.~Gazzola,  {\em On the variation of longitudinal and torsional frequencies in a partially hinged rectangular plate}, ESAIM Control Optim. Calc. Var. 24 (2018), 63--87.

\bibitem{bbgza} E.~Berchio, D.~Buoso, F.~Gazzola, D.~Zucco, {\em A Minimaxmax Problem for Improving the Torsional Stability of Rectangular Plates}, J. Optim. Theory Appl. 177 (2018),
64--92.


\bibitem{bfg}
E.~Berchio, A.~Ferrero, F.~Gazzola, {\em Structural instability of nonlinear plates modelling suspension bridges: mathematical answers to some long-standing questions}, Nonlin. Anal. Real World Appl. 28 (2016), 91--125.



\bibitem{lamberti} D.~Buoso, P.~D.~Lamberti,  {\em Eigenvalues of polyharmonic operators on variable domains}, ESAIM: Control, Optim. Calc. Var., 19, (2013), n.4, 1225--1235.

\bibitem{chanillo1} S.~Chanillo, {\em Conformal geometry and the composite membrane problem}, Anal. Geom. Metr. Spaces 1 (2013), 31--35.

 \bibitem{chanillo} S.~Chanillo, D.~Grieser, M.~Imai, K.~Kurata, I.~Ohnishi, {\em Symmetry breaking and other phenomena in the optimization of eigenvalues for composite membranes}, Comm. Math. Phys. 214, (2000), 315--337.

\bibitem{chanillo2} S.~Chanillo, D.~Grieser, K.~Kurata,  {\em The free boundary problem in the optimization of composite membranes}, Differential geometric methods in the control of partial differential equations (Boulder, CO, 1999), 61-81, Contemp. Math., 268, Amer. Math. Soc., Providence, RI, 2000.

\bibitem{chanillo3} S.~Chanillo, C.~Kenig, T.~To, {\em Regularity of the minimizers in the composite membrane problem in $\R^2$}, J. Funct. Anal. 255 (2008), no. 9, 2299--2320.

\bibitem{CV2} F.~Colasuonno, E.~Vecchi, {\em Symmetry and rigidity for the hinged composite plate problem}, J. Differential Equations 266 (2019), no. 8, 4901--4924.

\bibitem{CV} F.~Colasuonno, E.~Vecchi, {\em  Symmetry in the composite plate problem}, Commun. Contemp. Math., Vol. 21, No. 02, (2018), 1850019, 34 pp.


\bibitem{cuccu2} F.~Cuccu, B.~Emamizadeh, G.~Porru, {\em Optimization of the first eigenvalue in problems involving the p-Laplacian}, Proc. Amer. Math. Soc. 137, (2009), 1677--1687.

\bibitem{cuccu1} F.~Cuccu, B.~Emamizadeh, G.~Porru, {\em Optimization problems for an elastic plate}, J. Math. Phys. 47, (2006), 08290, 12 pp.

\bibitem{fergaz} A.~Ferrero, F.~ Gazzola, {\it A partially hinged rectangular plate as a model for suspension bridges}, Disc.\ Cont.\ Dyn.\ Syst.\ A.\ 35 (2015), 5879--5908.

\bibitem{bookgaz} F.~Gazzola, {\it Mathematical models for suspension bridges}, MS\&A Vol.\,15, Springer, 2015.

\bibitem{book} F.~Gazzola, H.~C.~Grunau, G.~Sweers, {\em Polyharmonic boundary value problems}, LNM 1991 Springer, 2010.

\bibitem{jhnm} J.~A.~ Jurado, S.~Hern\'andez, F.~Nieto, A.~Mosquera, {\em Bridge aeroelasticity: sensitivity analysis and optimum design (high performance
structures and materials)}, WIT Press - Computational Mechanics, Southampton, 2011.

\bibitem{ksw} B.~Kawohl,  J.~Star\'a, G.~Wittum, {\em Analysis and numerical studies of a problem of shape design}, Arch. Rat. Mech. Anal. 114,  (1991) 349--363.






\bibitem{lapr} P.D.\ Lamberti, L. Provenzano, {\it A maximum principle in spectral optimization problems for elliptic operators subject to mass density perturbations}, Eurasian Math. J. 4 (2013), no. 3, 70-83.

\bibitem{murat} F.~Murat, L.~Tartar,  {\em Calculus of variations and homogenization}, Topics in the Math. Modelling of Composite Materials 31, Progr. Nonlin. Diff. Eq. Appl. (1997) 139--173.

\bibitem{navier} C.~L.~Navier, {\em Extraits des recherches sur la flexion des plans \'elastiques}, Bulletin des Sciences de la Soci\'et\'e
Philomathique de Paris (1823), 92--102.

\bibitem{nazarov} S.~A.~Nazarov, G.~H.~ Sweers,  A.~S.~Slutskij,  {\em Homogenization of a thin plate reinforced with periodic families of rigid rods},
Sbornik Mathematics 202, (2011), 1127--1168.


\bibitem{sha} H. Shahgholian, {\em The singular set for the composite membrane problem}, Comm. Math. Phys. 271, (2007), 93--101.


\end{thebibliography}
\end{document}